\newtheorem{remark}{Remark}[section]
\newcommand{\mR}{\mathbb{R}}
\newcommand{\mN}{\mathbb{N}}
\newcommand{\mZ}{\mathbb{Z}}
\newcommand{\norm}[1]{\|#1\|}
\newcommand{\mbf}[1]{\mathbf{#1}}
\newcommand{\transp}[1]{{#1}^{\textup T}}
\newcommand{\argmin}{\operatornamewithlimits{arg min}}
\newcommand{\bq}{\begin{eqnarray}}
\newcommand{\eq}{\end{eqnarray}}
\renewcommand{\Pi}{\mbox{\LARGE$\pi$}}
\definecolor{mybox}{HTML}{F7E9A8}
\newtheorem{theorem}{Theorem}[section]
\begin{document}

\title{Automatic alignment for three-dimensional tomographic reconstruction}

\author{
	Tristan van Leeuwen$^1$,
	Simon Maretzke$^3$
	and
	K. Joost Batenburg$^{2,4}$
}

\address{
	$^1$Mathematical Institute, Utrecht University, the Netherlands\\
	$^2$Centrum Wiskunde \& Informatica, Amsterdam, the Netherlands\\
	$^3$Institute for Numerical and Applied Mathematics, University of G{\"ottingen}, G{\"ottingen}, Germany\\
	$^4$Mathematical Institute Leiden University, the Netherlands
}

\begin{abstract}
In tomographic reconstruction, the goal is to reconstruct an unknown object from a collection of line integrals. Given a complete sampling of such line integrals for various angles and directions, explicit inverse formulas exist to reconstruct the object. Given noisy and incomplete measurements, the inverse problem is typically solved through a regularized least-squares approach. A challenge for both approaches is that in practice the exact directions and offsets of the x-rays are only known approximately due to, e.g.\ calibration errors. Such errors lead to artifacts in the reconstructed image. In the case of sufficient sampling and geometrically simple misalignment, the measurements can be corrected by exploiting so-called consistency conditions. In other cases, such conditions may not apply and we have to solve an additional inverse problem to retrieve the angles and shifts. In this paper we propose a general algorithmic framework for retrieving these parameters in conjunction with an algebraic reconstruction technique. The proposed approach is illustrated by numerical examples for both simulated data and an electron tomography dataset.
\vspace{1em}

\noindent \textbf{Published as:} \href{https://doi.org/10.1088/1361-6420/aaa0f8}{\emph{Inverse Problems} \textbf{34}(2):024004, 2018}\vspace{-2em}
\end{abstract}

%

\section{Introduction}
In tomographic reconstruction, the aim is to recover an unknown object described by a function $u:\mathbb{R}^3\rightarrow \mathbb{R}_{+}$ with compact support from a finite number of samples of its X-ray transform
\[
p(\mathbf{s},\boldsymbol{\eta}) = \int_{\mathbb{R}}\!\mathrm{d}t\, u(\mathbf{s} + t\boldsymbol{\eta}),
\]
where $\mbf{s}\in\mathbb{R}^3$ and $\boldsymbol{\eta}\in\mathbb{S}^{2}$ determine the origin and direction of the X-ray (with $\mathbb{S}^{2} = \{\mbf{x}\in\mathbb{R}^3\,\, |\,\, \|\mbf{x}\|_2 = 1\}$ denoting the three-dimensional unit sphere). The samples of the X-ray transform are typically obtained by a scanning device, such as a CT scanner, where a source and detector traverse a certain trajectory around the object, which defines the set of sample points $\{(\mathbf{s}_i,\boldsymbol{\eta}_i)\}_{i=1}^m$. A few common setups are illustrated in figure \ref{fig:acquisition}. Here, a number of individiual ray-trajectories are illustrated as well as the axis and range of rotation of the object.

\begin{figure}[h!]
\centering
\begin{tabular}{ccc}
\includegraphics[scale=.25]{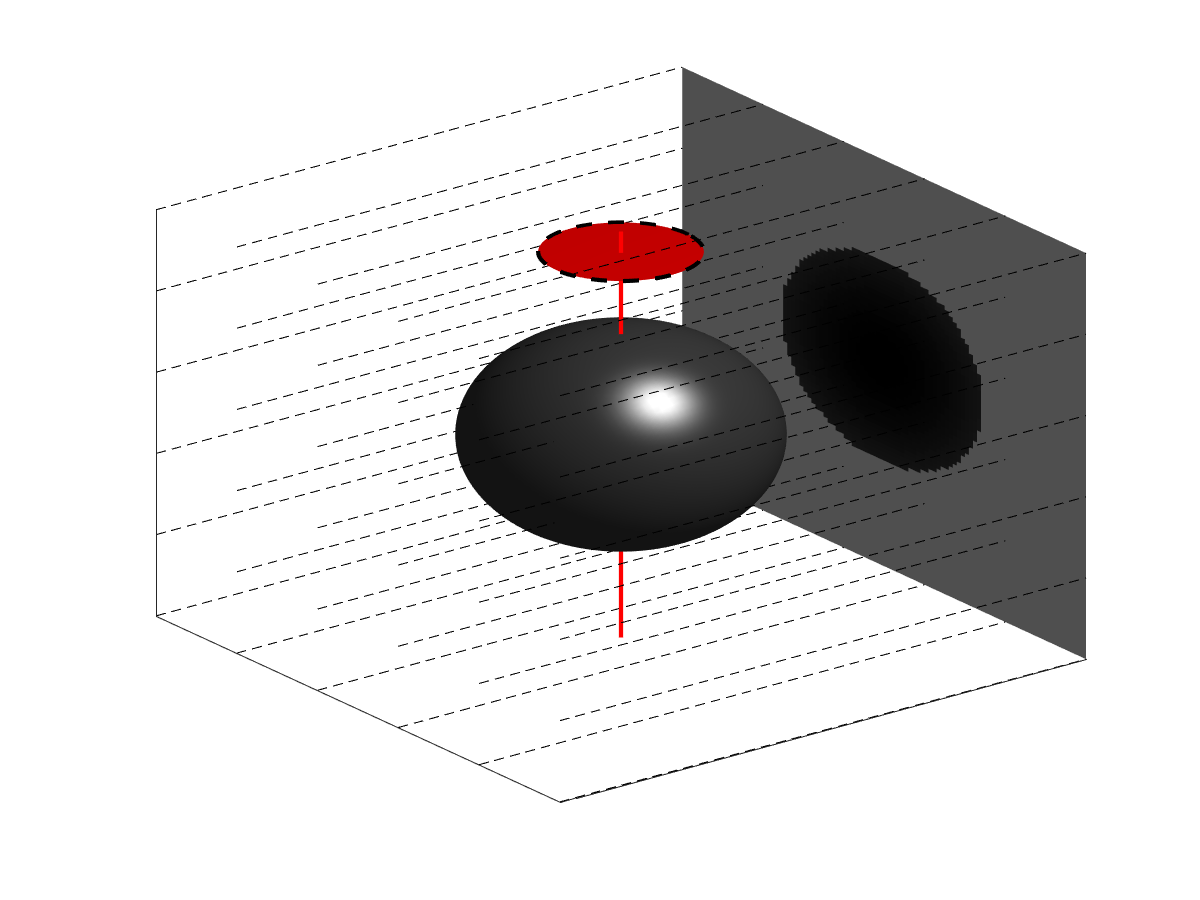}&
\includegraphics[scale=.25]{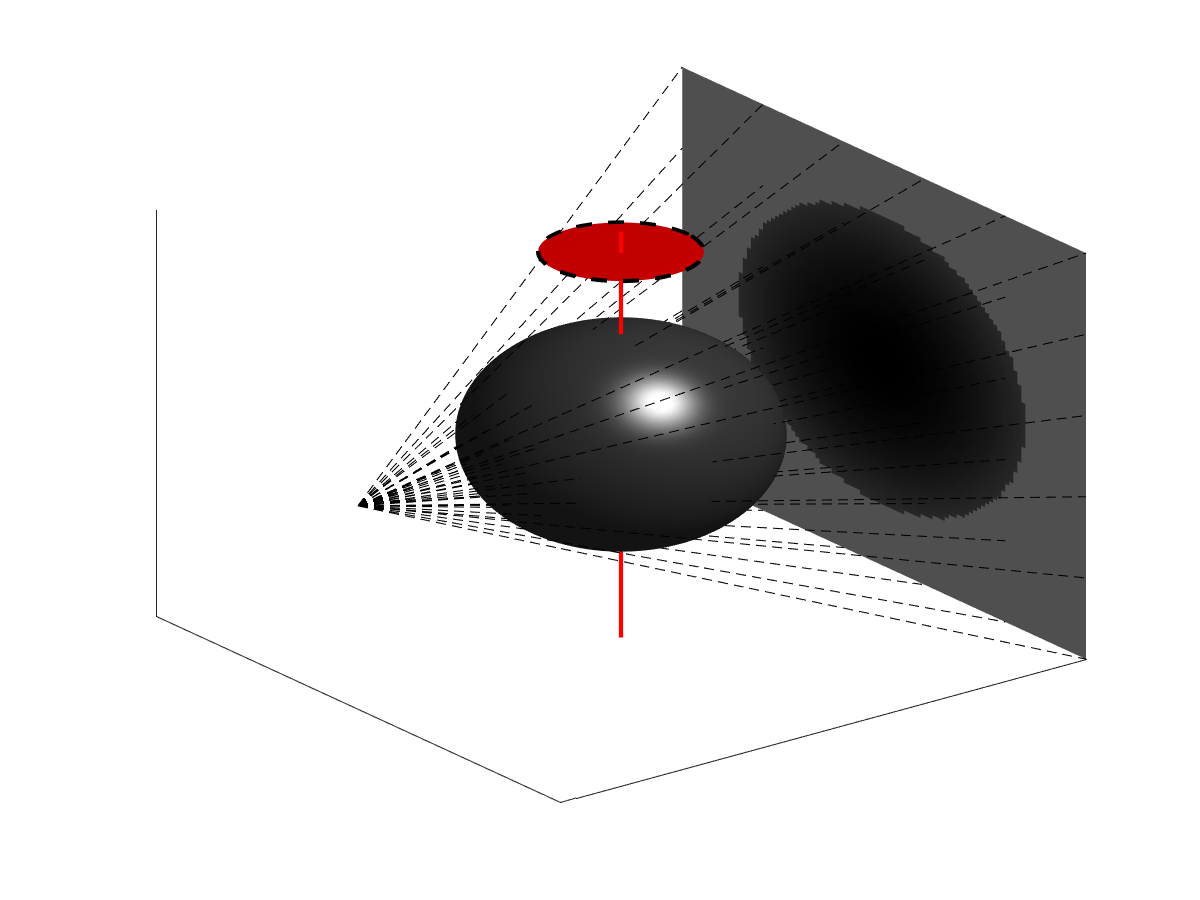}&
\includegraphics[scale=.25]{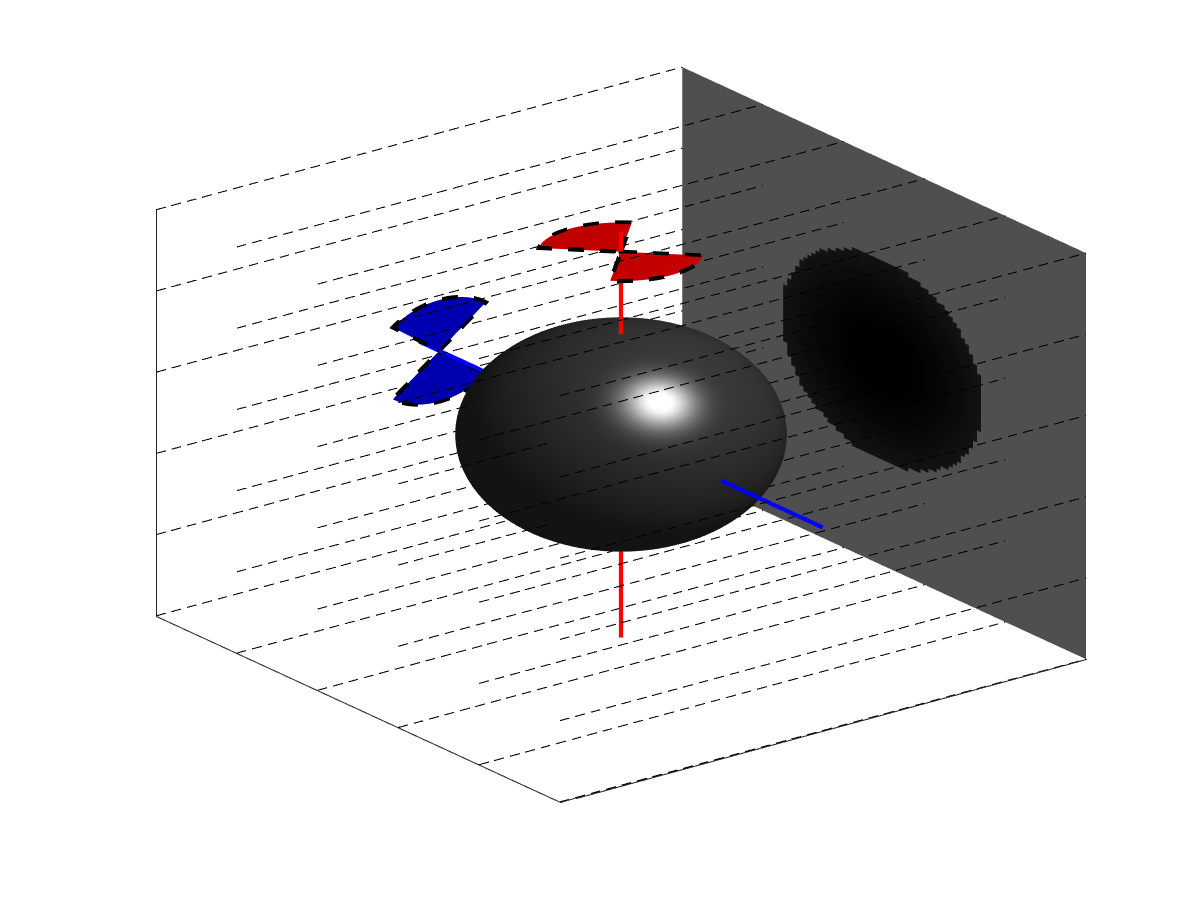}\\
{\small (parallel-beam)}&{\small (circular cone beam)}&{\small (dual tilt parallel-beam)}\\
\end{tabular}
\caption{Various acquisition setups that occur frequently in practice. Individual ray-trajectories are illustrated with a dotted line. The rotation axes of the object are denoted in red and blue while the angular range of the rotation is indicated with a (partial) disk. In the dual tilt setup, the object is rotated along the two axes subsequently.}
\label{fig:acquisition}
\end{figure}

The inverse problem consists of reconstructing $u$ from samples $\{p(\mathbf{s}_i,\boldsymbol{\eta}_i)\}_{i=1}^m$ of $p$. Upon Fourier transforming $p$ with respect to its first argument we get
\[
\widehat{p}(\boldsymbol{\xi},\boldsymbol{\eta}) = (2\pi)^{-\frac{3}{2}}\int_{\mathbb{R}}\!\mathrm{d}t\, \widehat{u}(\boldsymbol{\xi})\exp\left(-\imath t (\boldsymbol{\xi}\cdot\boldsymbol{\eta})\right),
\]
which forms the basis for many analytic reconstruction formulas. Indeed, for the parallel-beam geometry we have $\boldsymbol{\xi}\cdot\boldsymbol{\eta} = 0$ and we retrieve the well-known Fourier Slice Theorem, which states that the Fourier transform of $p$ along $\mathbf{s}$ gives the Fourier transform of $u$ along a slice perpendicular to $\boldsymbol{\eta}$ \cite{kak2001principles}. For some specific acquisition geometries, such as parallel-beam and fan-beam, explicit inverse formulas have been derived \cite{natterer_mathematics_2001}. These typically involve a filtering of the projection data followed by a backprojection step (filtered backprojection) or interpolation followed by an inverse Fourier transform. We can think of these as \emph{invert-then-discretize} approaches to solving the inverse problem. Being derived from a continuous model, these formulas only give an accurate reconstruction for fully and regularly sampled data.

On the contrary, various non-regular and under-sampled scenarios occur regularly in practice and can often be conveniently approached by first discretizing and then inverting the resulting system of linear equations. Such a \emph{discretize-then-invert} approach is often referred to as \emph{algebraic reconstruction} in the literature \cite{GORDON1970471,Herman09}.

Upon expressing $u$ in terms of an appropriate basis with coefficients $\{u_i\}_{i=1}^n$ and approximating the integral, the reconstruction problem can be posed as system of linear equations
\[
W\mathbf{u} = \mathbf{p},
\]
where $W \in \mathbb{R}^{m\times n}$ is the projection matrix whose elements depend on the acquisition parameters, $\mathbf{u}\in\mathbb{R}^n$ represents the object and $\mathbf{p}\in\mathbb{R}^m$ are the projection data for given rays characterized by $(\mbf{s}_i, \boldsymbol{\eta}_i)$, i.e., $p_i = p(\mbf{s}_i, \boldsymbol{\eta}_i)$. This moves the reconstruction problem into completely different realms of mathematics.
Results and algorithms from the field of convex analysis and image processing can be brought to bear to include various regularization terms such as Total Variation to define a unique regularized solution. Although exact reconstruction can no longer be guaranteed with such approaches, they are often used in practice and may yield accurate results even in the case of highly corrupted or incomplete data \cite{Sidky09,Goris12}.

When dealing with tomographic problems in practice, a secondary type of inverse problem must often be solved as part of the image reconstruction process. In the standard models for
tomographic reconstruction, the geometry of the imaging system (position and 3D orientation of sources and detectors) are assumed to be perfectly known and are a fixed part of the
inverse problem that requires no further attention. In practice, however, the corresponding acquisition parameters, which determine the trajectories of the X-rays through the unknown object, may be known a-priori only with limited accuracy. Recovering these parameters then becomes an inverse problem of itself. An extreme case is single-particle analysis based on images obtained by cryo-electron microscopy, where the orientation of the object with respect to the incident X-rays is completely unknown \cite{Frank09}. Moreover, even if the acquisition parameters are prescribed by the setup, calibration errors or (rigid) movement of the object itself may result in a mismatch between the assumed ideal geometry and the effective geometry used to record the projections. Reconstruction from such distorted datasets introduces artifacts in the recovered object and leads to a loss of resolution. Estimating the acquisition parameters to correct such alignment errors is therefore of crucial importance in many practical applications \cite{Jing91,ParkinsonEtAl2012_AltMinAlignSoftXray,Mayo07}.

A relatively straightforward and practical approach to deal with the estimation of the alignment parameters is the introduction of markers attached to the scanned object, each of which can be tracked through the series of projection images. The motion of these markers (often small beads made of dense material) can then be related to their 3D position, determined from the series of observed 2D positions in the projection images. The strength of this approach lies in its ability to accurately recover many different geometrical parameters, including not only shifts of the projection images, but also detector tilts and variations in the position of the X-ray source. A significant drawback  is the need to prepare the object before the scan to apply the markers, and also the effects that the markers themselves can have on the reconstructed images: as marker particles have to be dense structures to show up in the individual projection images, they typically lead to streak artefacts in the reconstructions \cite{Song12}.

A common approach to the alignment problem that does not require the presence of markers is \emph{cross-correlation}.
The idea is that two projections with a small angular separation will be highly similar in structure, except for a geometric stretching of the data.
By incorporating such geometric assumptions in the alignment procedure and correlating consecutive images to each other sequentially, some shift and rotation parameters can be estimated in a heuristic way, but with limited accuracy. Out-of-plane movements on the other hand, such as detector tilts and variations in the position of the X-ray source, cannot be retrieved with such approaches.



The category of alignment methods that do not require the use of markers or feature-detection in the projections is mainly based on exploiting the redundancy in the
tomographic data. Similar to the development of analytical and algebraic methods for solving the reconstruction problem, the alignment problem can also be approached along these two different paths. On the analytical
side, consistency conditions can be formulated for various source trajectories, including the well-known \emph{Helgason-Ludwig consistency conditions} \cite{Helgason99} for parallel-beam illumination. These prescribe sets of equations that must be satisfied by any valid set of tomographic projections. For example, the first order Helgason-Ludwig condition prescribes that the center of mass of the scanned object must project onto the center of mass of each of the measured projections, which can be used to estimate unwanted shifts of the detector that vary across the range of angles. A drawback of such methods is that they require full-field, i.e.\ non-truncated projection data. Hence, they are - strictly speaking - not applicable to  \emph{region-of-interest} tomography, although necessary conditions \cite{clackdoyle2015data} and heuristic adaptations \cite{sanders_physically_2015} have been proposed recently. Local consistency conditions can also be defined in some cases, e.g.\ based on John's equation for cone-beam illumination \cite{tang2012mathematical}. Although these conditions can be evaluated in a region-of-interest tomography setting, they instead rely on the assumption that the angular sampling is sufficient to allow for the numerical computation of derivatives that approximate a continuum of projection angles.  Recently, a new type of consistency conditions has been introduced in \cite{Aichert15} based on epipolar geometry that is more general and may lift some of these constraints, but its full relation to the classical consistency conditions remains to be investigated.

Alternatively, estimation algorithms for the alignment parameters have been developed based on minimizing the inconsistencies between the forward model applied to the reconstructed image and the measured projection data.
In  \cite{Yang05}, a quasi-newton algorithm is used to simultaneously compute the reconstructed image and the corresponding alignment parameters, using a limited-memory Broyden method to approximate the Hessian term. A similar approach is followed in
\cite{22162}, where a Levenberg-Marquardt method is used instead. In \cite{ParkinsonEtAl2012_AltMinAlignSoftXray}, a scheme for alignment of X-ray images is proposed that alternates between image reconstruction steps (using fixed alignment parameters) and alignment steps (using a fixed reconstruction). In addition to the consistency with the measured data, \cite{Houben2011_AltMinAlign} employs a sharpness criterion in the image domain as part of the optimization process.
This broad class of methods, known as \emph{projection matching}, follow a \emph{discretize-then-invert} approach and are much more general than the analytical approach. Similar to the algebraic reconstruction methods, they allow for the incorporation of prior knowledge about the object within the cost function for the parameter estimation and they can deal effectively with non-standard geometrical setups in principle, although existing implementations are often limited to correcting in-plane misalignment.

In this paper, we consider a rather general formulation of the projection matching approach and investigate various mathematical and numerical aspects of different solution strategies. Although a broad variety of projection matching algorithms have been described in the literature, they are typically proposed in relation to a specific application, where a single algorithm variant is proposed evaluated based on empirical results only. Instead, we focus on the underlying numerical mechanisms and outline the different choices that can be made in splitting the problem into subproblems.

We consider the setting where alignment errors can be modelled by three-dimensional rigid motion of the object.
Accordingly, each projection has its own set of up to six alignment parameters (since any rigid motion in three dimensions can be described by three shifts and three rotations). We assume that for each projection, the corresponding alignment parameters are approximately known. This is true for most tomographic imaging setups, where projection images are acquired from known directions and only small deviations from the known geometry need to be determined, but not in the field of single-particle-analysis for example, where projections can be randomly distributed with completely unknown angles. In such cases, different algorithms have to be applied such as the approach in \cite{Coifman08} based on graph laplacians.
Note that the alignment parameters for each projection image are considered as independent, and thus can be used to model model rigid motion of the object relative to the fixed source-detector-system during the data acquistion.


The forward model for the tomographic alignment problem can be expressed as
\begin{equation}
\label{eq:jointFwModel}
W(\mathbf{a})\mathbf{u} = \mathbf{p},
\end{equation}
where $\mathbf{a}\in\mathbb{R}^\ell$ with $\ell = 6 N_{\textup{proj}}$ is a vector containing a suitable parametrization of a rigid object motion, $N_{\textup{proj}} \in \mN$ denoting the number of projection images. We propose an algorithmic approach for joint alignment and tomographic reconstruction based on the formulation \eqref{eq:jointFwModel}. This paper provides a mathematical framework that encompasses a large class of such methods, allowing for analysis in a common framework.

The paper is organized as follows. In section \ref{NLLS}, we present the main ideas underlying our approach and provide a detailed analysis of the proposed algorithm. Some practical aspects of the implementation of the algorithm are discussed in section \ref{algorithm}. Results on both phantom and real datasets are presented in section \ref{results}. Section \ref{conclusions} concludes the paper.

\section{Non-linear least-squares}
\label{NLLS}
We pose the joint reconstruction and alignment problem as a non-linear least-squares problem
\begin{equation}
\label{eq:joint}
\min_{\mathbf{a},\mathbf{u}} f(\mbf{a},\mbf{u}) = \textstyle{\frac{1}{2}}\|W(\mathbf{a})\mathbf{u} - \mathbf{p}\|_2^2 + h(\mathbf{u}),
\end{equation}
where $h$ is an appropriate regularization term. This problem is related to the Total Least-Squares (TLS) problem in the sense that the matrix itself is (to some extent) unknown \cite{TLS}. In the traditional TLS formulation, however, the perturbation $W$ would be modelled as a matrix-valued unknown, i.e., $W + \Delta W$. Although this provides an interesting viewpoint on the alignment problem, it is outside the scope of this paper to pursue this connection further.

Throughout the paper we will make the following assumptions on $f$:
\begin{itemize}
\item[(A1)] $W(\mathbf{a})$ has full rank for all admissable $\mathbf{a}$ or $h$ is \emph{strongly} convex. It follows from these assumptions that $f$ is strongly convex in $\mathbf{u}$ for any admissible $\mathbf{a}$ so that the solution $\overline{\mbf{u}}(\mathbf{a})$ that minimizes $f(\mathbf{a},\mathbf{u})$ is unique.
\item[(A2)] There exists an optimal set of alignment parameters and image, $(\overline{\mathbf{a}},\overline{\mathbf{u}})$, that minimizes $f$ and $f$ is convex in the neighborhood of this minimizer.
%
%
\end{itemize}
We note that only the first assumption is absolutely critical, but can be fulfilled easily in practice. The second assumption is necessary for some of the results that follow.
For the remainder of the section we will furthermore assume that
\begin{itemize}
\item[(A3)] $f$ is  twice continuously differentiable in $\mbf{a}$. This follows immediately if $W$ is twice continuously differentiable in $\mathbf{a}$ which requires the use of higher-order quadrature rules for approximating the projection-integrals.
%
%
\item[(A4)] $h$ is twice continuously differentiable.
\end{itemize}
We will discuss to what extent these latter assumptions can be dropped in section \ref{SS:lessDiff}.

The gradient and Hessian of $f$ are given by
\begin{equation}
\nabla f(\mbf{a},\mbf{u}) = \left(
\begin{matrix}
G(\mbf{a},\mbf{u})^T\!\left(W(\mathbf{a})\mathbf{u} - \mathbf{p}\right)\\
W(\mbf{a})^T\!\left(W(\mathbf{a})\mathbf{u} - \mathbf{p}\right) + \nabla h(\mathbf{u})
\end{matrix}
\right),
\end{equation}
and
\begin{equation}
\nabla^2 f(\mbf{a},\mbf{u}) = \left(
\begin{matrix}
G(\mathbf{a},\mathbf{u})^TG(\mathbf{a},\mathbf{u}) + R(\mathbf{a},\mathbf{u},\mathbf{r}) & G(\mathbf{a},\mathbf{u})^TW(\mathbf{a}) + H(\mathbf{a},\mathbf{r})^T        \\
W(\mathbf{a})^TG(\mathbf{a},\mathbf{u}) + H(\mathbf{a},\mathbf{r}) & W(\mathbf{a})^TW(\mathbf{a}) + \nabla^2 h(\mathbf{u})\\
\end{matrix}
\right),
\end{equation}
where $G(\mbf{a},\mbf{u})$ is the Jacobian of $\mbf{a} \mapsto W(\mathbf{a})\mathbf{u}$ with columns $(\partial W  / \partial a_i)\mathbf{u}$, $H(\mathbf{a},\mathbf{v})$ is the Jacobian of $\mbf a \mapsto W(\mathbf{a})^T\mathbf{v}$ with columns $\left( {\partial W}/{\partial a_i}\right)^T\mathbf{v}$,  $\mathbf{r} := W(\mathbf{a})\mathbf{u} - \mathbf{p}$ and $R(\mathbf{a},\mathbf{u},\mathbf{v}) \in \mR^{\ell \times \ell}$ has entries $\mathbf{u}^T ( \partial^2 W (\mbf a) / \partial a_i \partial a_j )\mathbf{v}$.
Note that by (A2), the full Hessian is positive semidefinite close to the minimizer.

We could in principle view (\ref{eq:joint}) as an optimization problem in $(\mathbf{a},\mathbf{u})$ and apply a gradient-based method to solve it directly. Such a brute-force joint optimization approach introduces several problems, however. Firstly, the problem can be severely ill-conditioned due to differences in sensitivity of $\mathbf{u}$ and $\mathbf{a}$. Furthermore, a joint approach makes it hard to employ advanced reconstruction techniques that have been developed for linear tomographic problems.

\subsection{Variable projection}

The joint estimation problem (\ref{eq:joint}) has a separable structure; it is strongly convex in $\mbf{u}$ and differentiable in $\mbf{a}$ (by assumption (A3)). Following the \emph{variable projection} approach \cite{golub73}, we eliminate the reconstructed object by setting $\overline{\mbf{u}}(\mbf{a}) := \argmin_{\mathbf{u}} f(\mathbf{a},\mathbf{u})$. The problem now reduces to
\begin{equation}
\min_{\mbf{a}} \overline{f}(\mbf{a}), \label{eq:ReducedMinProb}
\end{equation}
where $\overline{f}(\mathbf{a}) = f(\mbf{a},\overline{\mbf{u}}(\mbf{a}))$.

We can now state the following results:
\begin{theorem}
\label{thm:gradient}
Let $\overline{f}(\mbf{a}) = \argmin_{\mathbf{u}} f(\mathbf{a},\mathbf{u})$. The gradient of $\overline{f}$ is given by
\begin{eqnarray*}
\nabla \overline{f}(\mbf{a}) &=& \nabla_{\mathbf{a}} f(\mathbf{a},\overline{\mathbf{u}}),
\end{eqnarray*}
i.e. it is simply the gradient of the full objective, evaluated at the optimal $\mathbf{u}$. The Hessian of $\overline{f}(\mbf{a})$ is given by
\begin{equation}
\nabla^2 \overline{f}(\mbf{a}) = \nabla^2 f(\mathbf{a},\overline{\mathbf{u}}) \backslash (W^TW + \nabla^2 h(\overline{\mathbf{u}})),
\end{equation}
i.e., the Schur complement of $(W^TW + \nabla^2 h)$ in the full Hessian of $f$.
\end{theorem}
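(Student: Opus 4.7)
The plan is to treat $\overline{\mathbf{u}}(\mathbf{a})$ as an implicitly defined function and differentiate twice, using the first-order optimality condition to simplify. Assumption (A1) makes $f(\mathbf{a},\cdot)$ strongly convex, so that $\overline{\mathbf{u}}(\mathbf{a})$ is uniquely determined by
\[
\nabla_{\mathbf{u}} f(\mathbf{a}, \overline{\mathbf{u}}(\mathbf{a})) = W(\mathbf{a})^T(W(\mathbf{a})\overline{\mathbf{u}}(\mathbf{a}) - \mathbf{p}) + \nabla h(\overline{\mathbf{u}}(\mathbf{a})) = 0.
\]
Since the block $\nabla^2_{\mathbf{u}\mathbf{u}} f = W(\mathbf{a})^T W(\mathbf{a}) + \nabla^2 h(\overline{\mathbf{u}})$ is positive definite, the implicit function theorem (using (A3) and (A4) for the required smoothness) guarantees that $\mathbf{a}\mapsto \overline{\mathbf{u}}(\mathbf{a})$ is $C^1$ on a neighborhood of $\overline{\mathbf{a}}$, and even $C^2$ under the stated assumptions. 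This legitimizes all the chain-rule manipulations below.

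For the gradient claim, I would apply the chain rule to $\overline{f}(\mathbf{a}) = f(\mathbf{a},\overline{\mathbf{u}}(\mathbf{a}))$:
\[
\nabla \overline{f}(\mathbf{a}) = \nabla_{\mathbf{a}} f(\mathbf{a}, \overline{\mathbf{u}}(\mathbf{a})) + \left(\frac{\partial \overline{\mathbf{u}}}{\partial \mathbf{a}}\right)^{T} \nabla_{\mathbf{u}} f(\mathbf{a}, \overline{\mathbf{u}}(\mathbf{a})).
\]
The second term vanishes by the first-order optimality of $\overline{\mathbf{u}}(\mathbf{a})$, which immediately yields the stated formula. This is the standard ``envelope theorem'' step and is the easy half of the theorem.

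For the Hessian claim, I would first recover the sensitivity $\partial \overline{\mathbf{u}}/\partial \mathbf{a}$ by differentiating the first-order condition $\nabla_{\mathbf{u}} f(\mathbf{a},\overline{\mathbf{u}}(\mathbf{a})) = 0$ with respect to $\mathbf{a}$, which gives
\[
\nabla^2_{\mathbf{u}\mathbf{a}} f + (W^TW + \nabla^2 h(\overline{\mathbf{u}}))\,\frac{\partial \overline{\mathbf{u}}}{\partial \mathbf{a}} = 0,
\]
so that $\partial \overline{\mathbf{u}}/\partial \mathbf{a} = -(W^TW + \nabla^2 h(\overline{\mathbf{u}}))^{-1}\,\nabla^2_{\mathbf{u}\mathbf{a}} f$. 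Then I differentiate $\nabla \overline{f}(\mathbf{a}) = \nabla_{\mathbf{a}} f(\mathbf{a},\overline{\mathbf{u}}(\mathbf{a}))$ once more using the chain rule:
\[
\nabla^2 \overline{f}(\mathbf{a}) = \nabla^2_{\mathbf{a}\mathbf{a}} f + \nabla^2_{\mathbf{a}\mathbf{u}} f\,\frac{\partial \overline{\mathbf{u}}}{\partial \mathbf{a}} = \nabla^2_{\mathbf{a}\mathbf{a}} f - \nabla^2_{\mathbf{a}\mathbf{u}} f\,(W^TW + \nabla^2 h(\overline{\mathbf{u}}))^{-1}\,\nabla^2_{\mathbf{u}\mathbf{a}} f.
\]
Reading off the blocks of $\nabla^2 f$ given in the excerpt, the right-hand side is exactly the Schur complement of the $(\mathbf{u},\mathbf{u})$-block in $\nabla^2 f(\mathbf{a},\overline{\mathbf{u}})$, which is the claim.

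The only genuine obstacle is justifying that $\overline{\mathbf{u}}(\mathbf{a})$ is twice differentiable, but this is handled cleanly by the implicit function theorem applied to $\nabla_{\mathbf{u}} f = 0$, whose Jacobian in $\mathbf{u}$ is invertible by (A1). Everything else reduces to bookkeeping with the chain rule and the envelope identity; the only potentially confusing point is to remember that the cross-partial blocks read from the excerpt's Hessian, namely $G^T W + H^T$ and $W^T G + H$, play the role of $\nabla^2_{\mathbf{a}\mathbf{u}} f$ and $\nabla^2_{\mathbf{u}\mathbf{a}} f$ in the Schur-complement formula.
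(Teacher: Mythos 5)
Your argument is correct and complete. The paper itself gives no derivation: it proves the gradient formula by citing Bell's Theorem~2 (the envelope/variable-projection identity, which requires (A1)) and the Hessian formula by citing Ruhe and Wedin's analysis of separable least squares. What you have written is precisely the standard argument that underlies both of those references --- the implicit function theorem applied to $\nabla_{\mathbf{u}}f(\mathbf{a},\overline{\mathbf{u}}(\mathbf{a}))=0$, the vanishing of the $(\partial\overline{\mathbf{u}}/\partial\mathbf{a})^{T}\nabla_{\mathbf{u}}f$ term in the first chain-rule expansion, and the substitution of the sensitivity $\partial\overline{\mathbf{u}}/\partial\mathbf{a}=-(W^{T}W+\nabla^{2}h)^{-1}\nabla^{2}_{\mathbf{u}\mathbf{a}}f$ into the second expansion to produce the Schur complement. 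So you have not found a different route so much as supplied the proof the paper outsources; that self-containedness is a genuine gain, since the reader can verify exactly where (A1), (A3) and (A4) enter. One small quibble: you claim $\overline{\mathbf{u}}$ is ``even $C^{2}$ under the stated assumptions,'' which would require third derivatives of $f$ that (A3)--(A4) do not provide; fortunately your argument never uses this --- differentiating $\nabla\overline{f}(\mathbf{a})=\nabla_{\mathbf{a}}f(\mathbf{a},\overline{\mathbf{u}}(\mathbf{a}))$ once only needs $\overline{\mathbf{u}}\in C^{1}$, which the implicit function theorem delivers from $f\in C^{2}$ and the positive definiteness of $W^{T}W+\nabla^{2}h$. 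Your identification of the cross blocks with $G^{T}W+H^{T}$ and $W^{T}G+H$ from the displayed Hessian is also correct.
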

\begin{proof}
The gradient-expression follows directly from \cite[Thm.\ 2]{Bell} which requires assumption A1, while the Schur-complement expression for the Hessian was presented by \cite{AxelRuheandPerAkeWedin1980a}.
\end{proof}

\begin{remark}
By Theorem \ref{thm:gradient} and assumption (A2) we find that $\overline{f}$ is locally convex as well, according to the properties of the Schur complement. In addition, we find that a (local) minimum $\overline{\mbf{a}}$ of $\overline{f}$ together with the corresponding $\overline{\mathbf{u}}$ is a (local) minimum of $f$.
\end{remark}

The reduced problem \eqref{eq:ReducedMinProb} has a much smaller dimensionality and is expected to be less ill-conditioned. Indeed, we find the following relation between the condition numbers of the Hessians of $f$ and $\overline{f}$.

\begin{theorem}
\label{thm:conditioning}
Under assumptions A1-A4 we find the following relation between the condition numbers of the full and reduced Hessians locally around a minimizer:
\[
\kappa\left(\nabla^2\overline{f}\right) \leq \kappa\left(\nabla^2 f\right),
\]
where $\kappa$ denotes the condition number of the corresponding matrix.
\end{theorem}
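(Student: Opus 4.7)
The plan is to recognize that by Theorem~\ref{thm:gradient} the reduced Hessian $\nabla^2\overline{f}(\mathbf{a})$ is precisely the Schur complement of $B := W(\mathbf{a})^TW(\mathbf{a}) + \nabla^2 h(\overline{\mathbf{u}})$ in the full Hessian $M := \nabla^2 f(\mathbf{a},\overline{\mathbf{u}})$, and then to exploit the fact that the inverse of a Schur complement appears as a principal submatrix of the inverse of the full matrix. Concretely, near the minimizer assumption (A2) gives $M \succeq 0$, and (A1) gives $B \succ 0$ (since $B$ is exactly the $\mathbf{u}$-block of $M$, whose strict positivity is the strong convexity of $f$ in $\mathbf{u}$). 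If $M$ is singular then $\kappa(M)=\infty$ and the claim is vacuous, so I may assume $M \succ 0$.

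Under this assumption $M$ is invertible, and a standard block-inversion calculation gives
\begin{equation*}
M^{-1} = \begin{pmatrix} S^{-1} & -S^{-1} C B^{-1} \\ -B^{-1} C^T S^{-1} & B^{-1} + B^{-1} C^T S^{-1} C B^{-1} \end{pmatrix},
\end{equation*}
where $S = \nabla^2\overline{f}(\mathbf{a})$ is the Schur complement and $C$ is the off-diagonal block of $M$. The key structural observation is that $S^{-1}$ appears as the top-left principal submatrix of the symmetric positive-definite matrix $M^{-1}$.

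With this in place, the result follows from Cauchy's interlacing theorem applied to the principal submatrix $S^{-1}$ of $M^{-1}$: the extremal eigenvalues of $S^{-1}$ are bracketed by those of $M^{-1}$, so $\lambda_{\min}(M^{-1}) \leq \lambda_{\min}(S^{-1})$ and $\lambda_{\max}(S^{-1}) \leq \lambda_{\max}(M^{-1})$. Inverting and using that $M,S \succ 0$ translates this into $\lambda_{\min}(S) \geq \lambda_{\min}(M)$ and $\lambda_{\max}(S) \leq \lambda_{\max}(M)$. Taking the ratio yields
\begin{equation*}
\kappa(S) = \frac{\lambda_{\max}(S)}{\lambda_{\min}(S)} \leq \frac{\lambda_{\max}(M)}{\lambda_{\min}(M)} = \kappa(M),
\end{equation*}
which is the claim.

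I expect no serious obstacle in this proof: the assumptions (A1)--(A4) are tailored precisely so that $M$ is (locally) symmetric positive semidefinite and $B$ is strictly positive definite, which are exactly what is needed to invoke the block-inversion identity and Cauchy interlacing. The only mild subtlety is the degenerate case where $M$ is merely positive semidefinite at the minimizer; this is handled by the trivial observation above, or by interpreting the bound locally at points where $M$ is strictly positive definite, as is generically the case under a second-order sufficient condition.
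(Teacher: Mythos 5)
Your proof is correct and takes essentially the same route as the paper: the paper's entire proof is a citation of a theorem of Smith (1992) stating that the eigenvalues of the Schur complement $\nabla^2\overline{f}$ interlace those of $\nabla^2 f$, and your block-inversion plus Cauchy-interlacing argument is precisely a self-contained derivation of that cited interlacing fact, from which the condition-number bound follows identically. Your handling of the degenerate case ($M$ merely positive semidefinite, so $\kappa(M)=\infty$) is a reasonable way to tidy up a point the paper leaves implicit.
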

\begin{proof}
Following \cite[Thm.\ 5]{Smith1992}, we find that the eigenvalues of $\nabla^2\overline{f}$ interlace those of $\nabla^2 f$. The result immediately follows.
\end{proof}

A more detailed analysis of the expected convergence behaviour when minimizing separable functions like $f$ and its reduced variant, $\overline{f}$, with Newton-like methods is presented by \cite{AxelRuheandPerAkeWedin1980a}.

\subsection{Inexactness}
In practice, it is not feasible to compute $\overline{\mathbf{u}}$ such that $\nabla_{\mathbf{u}}f(\mathbf{a},\overline{\mathbf{u}}) = 0$ exactly. Rather, let us assume we have an approximation $\overline{\mathbf{u}}_{\epsilon}$ such that
\[
\|\nabla_{\mathbf{u}}f(\mathbf{a},\overline{\mathbf{u}}_{\epsilon})\|_2 \leq \epsilon,
\]
and define
\[
\overline{f}_{\epsilon}(\mbf{a}) = f(\mbf{a},\overline{\mathbf{u}}_{\epsilon}).
\]
Although we could in principle compute the \emph{exact} gradient of $\overline{f}_{\epsilon}$ by taking into account the finite accuracy of the reconstruction, this would require us to differentiate the algorithm used to obtain $\overline{\mathbf{u}}_{\epsilon}$. This is not very attractive from a practical point-of-view. Instead, we approximate the gradient as $\nabla\overline{f}_{\epsilon}(\mbf{a}) \approx \nabla\overline{f}(\mbf{a}) = \nabla_{\mbf{a}}\overline{f}(\mbf{a},\overline{\mathbf{u}}_{\epsilon})$ and analyze the error we introduce. Under assumptions (A1)-(A4), we obtain the following result.

\begin{theorem}
\label{thm:inexact}
Given $\overline{\mathbf{u}}_{\epsilon}$ satisfying $\|\nabla_{\mathbf{u}}f(\mathbf{a},\overline{\mathbf{u}}_{\epsilon})\|_2 \leq \epsilon$, define the approximate gradient
\[
\nabla \overline{f}_{\epsilon}(\mathbf{a}) = \nabla_{\mathbf{a}} f(\mathbf{a},\overline{\mathbf{u}}_{\epsilon}).
\]
We then have
\[
\|\nabla \overline{f}_{\epsilon}(\mathbf{a}) - \nabla \overline{f}(\mathbf{a})\|_2 \leq C\epsilon,
\]
for some $C > 0$.
\end{theorem}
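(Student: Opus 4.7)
The plan is to combine two Lipschitz-type estimates: one controlling the shift in $\nabla_{\mathbf{a}} f$ when the second argument is perturbed, and one converting the gradient-norm bound $\epsilon$ into a bound on $\|\overline{\mathbf{u}}_{\epsilon}-\overline{\mathbf{u}}\|_2$. By Theorem \ref{thm:gradient}, $\nabla \overline{f}(\mathbf{a}) = \nabla_{\mathbf{a}} f(\mathbf{a},\overline{\mathbf{u}})$, so the quantity we must control is
\[
\|\nabla_{\mathbf{a}} f(\mathbf{a},\overline{\mathbf{u}}_{\epsilon}) - \nabla_{\mathbf{a}} f(\mathbf{a},\overline{\mathbf{u}})\|_2 .
\]
This reduces the statement to an essentially standard sensitivity argument for strongly convex inner problems with smooth dependence on the outer variable.

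First I would argue Lipschitz continuity of the map $\mathbf{u} \mapsto \nabla_{\mathbf{a}} f(\mathbf{a},\mathbf{u}) = G(\mathbf{a},\mathbf{u})^{T}(W(\mathbf{a})\mathbf{u}-\mathbf{p})$. Its Jacobian in $\mathbf{u}$ is precisely the off-diagonal block $W(\mathbf{a})^{T} G(\mathbf{a},\mathbf{u}) + H(\mathbf{a},\mathbf{r})$ appearing in $\nabla^2 f$. Under assumptions (A3)--(A4), this block is continuous and therefore bounded on any compact neighborhood of $(\overline{\mathbf{a}},\overline{\mathbf{u}})$; call the resulting bound $L$. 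Applying the mean value inequality along the segment between $\overline{\mathbf{u}}$ and $\overline{\mathbf{u}}_{\epsilon}$ gives
\[
\|\nabla_{\mathbf{a}} f(\mathbf{a},\overline{\mathbf{u}}_{\epsilon}) - \nabla_{\mathbf{a}} f(\mathbf{a},\overline{\mathbf{u}})\|_2 \leq L \,\|\overline{\mathbf{u}}_{\epsilon} - \overline{\mathbf{u}}\|_2 .
\]

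Second, I would convert the residual $\epsilon$ in the inner gradient into a distance in $\mathbf{u}$. By assumption (A1), $f(\mathbf{a},\cdot)$ is strongly convex with some modulus $\mu>0$ (coming either from $W^{T}W$ having full rank, or from $\nabla^2 h \succeq \mu I$). Strong convexity makes $\nabla_{\mathbf{u}} f(\mathbf{a},\cdot)$ strongly monotone with modulus $\mu$, so together with $\nabla_{\mathbf{u}} f(\mathbf{a},\overline{\mathbf{u}}) = 0$,
\[
\mu \,\|\overline{\mathbf{u}}_{\epsilon} - \overline{\mathbf{u}}\|_2 \;\leq\; \|\nabla_{\mathbf{u}} f(\mathbf{a},\overline{\mathbf{u}}_{\epsilon}) - \nabla_{\mathbf{u}} f(\mathbf{a},\overline{\mathbf{u}})\|_2 \;=\; \|\nabla_{\mathbf{u}} f(\mathbf{a},\overline{\mathbf{u}}_{\epsilon})\|_2 \;\leq\; \epsilon .
\]
Chaining the two estimates gives $\|\nabla \overline{f}_{\epsilon}(\mathbf{a}) - \nabla \overline{f}(\mathbf{a})\|_2 \leq (L/\mu)\,\epsilon$, so the constant $C=L/\mu$ works.

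The main obstacle, such as it is, is not the algebra but making sure the constant $L$ is genuinely finite. Because $G(\mathbf{a},\mathbf{u})$ and $H(\mathbf{a},\mathbf{r})$ depend on $\mathbf{u}$ and on $\mathbf{r} = W(\mathbf{a})\mathbf{u}-\mathbf{p}$, the Lipschitz constant is only uniform on bounded sets; this is why I would phrase the bound locally, restricting to a neighborhood of $(\overline{\mathbf{a}},\overline{\mathbf{u}})$ guaranteed by (A2) and exploiting continuity from (A3)--(A4). A minor point to flag is that $G$ is linear in $\mathbf{u}$ (since $G_i = (\partial W/\partial a_i)\mathbf{u}$) and $W$ is affine in $\mathbf{u}$, so the claimed Lipschitz bound in $\mathbf{u}$ holds with an explicit constant in terms of $\|W(\mathbf{a})\|$, $\|\partial W/\partial a_i\|$, $\|\partial^2 W/\partial a_i \partial a_j\|$ and $\|\mathbf{p}\|$; all of these are locally bounded under (A3).
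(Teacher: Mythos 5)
Your proof is correct and follows essentially the same route as the paper's: the paper compresses your two estimates into the single identity $\nabla \overline{f}_{\epsilon}(\mathbf{a}) = \nabla \overline{f}(\mathbf{a}) + \nabla^2_{\mathbf{a},\mathbf{u}} f \left(\nabla^2_{\mathbf{u},\mathbf{u}} f\right)^{-1}\nabla_{\mathbf{u}} f(\mathbf{a},\overline{\mathbf{u}}_{\epsilon})$ and takes $C = \|\nabla^2_{\mathbf{a},\mathbf{u}} f \left(\nabla^2_{\mathbf{u},\mathbf{u}} f\right)^{-1}\|_2$, which your $L/\mu$ upper-bounds. If anything your version is the more careful one, since the paper's identity is exact only when $f$ is quadratic in $\mathbf{u}$ and otherwise requires precisely the mean-value and strong-monotonicity arguments you spell out.
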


\begin{proof}
The approximate gradient is readily expressed as
\[
\nabla \overline{f}_{\epsilon}(\mathbf{a}) = \nabla \overline{f}(\mathbf{a}) + \nabla^2_{\mathbf{a},\mathbf{u}} f \left(\nabla^2_{\mathbf{u},\mathbf{u}} f\right)^{-1}\nabla_{\mathbf{u}} f(\mathbf{a},\overline{\mathbf{u}}_{\epsilon}),
\]
from which we find
\[
\|\nabla \overline{f}_{\epsilon}(\mathbf{a}) - \nabla \overline{f}(\mathbf{a})\|_2 \leq C\epsilon,
\]
where $C = \|\nabla^2_{\mathbf{a},\mathbf{u}} f \left(\nabla^2_{\mathbf{u},\mathbf{u}} f\right)^{-1}\|_2$.
\end{proof}
Gradient-descent algorithms with approximate derivatives have been studied extensively, and there are convergence guarantees to a local minimum of $\overline{f}$ as long as $\epsilon \downarrow 0$ \emph{or} to a point within distance $\mathcal{O}(\epsilon)$ of the local minimum for a fixed $\epsilon$ \cite{DAspremont2008,Friedlander2012}.

\subsection{Extensions to less-differentiable cases} \label{SS:lessDiff}
We can drop the requirement for the existence of second derivatives (A3) if we instead assume that the gradient of $f$ w.r.t.\ $(\mathbf{a},\mathbf{u})$ is Lipschitz continuous. The resulting reduced objective $\overline{f}$ will have a Lipschitz continuous gradient as well. Results analogous to Theorems \ref{thm:conditioning} and \ref{thm:inexact} can then be derived using the local convexity of $f$ (A2).

Of special interest are non-differentiable regularization terms on $\mathbf{u}$ such as Total Variation (TV) or bound-constraints. We can readily drop assumption (A4) because the assumption of strong convexity of $f$ in $\mathbf{u}$ (A1) is sufficient to guarantee differentiability of $\overline{f}(\mathbf{a})$, see \cite[Thm.\ 9.18]{rockafellar_variational_2009}.

In many practical settings, the evaluation of $W$ is implemented via (multi-)linear interpolation \cite{xu_comparative_2006}, in which case the derivatives with respect to $\mathbf{a}$ are not continuous. Under assumption (A2) that $f$ is (locally) convex, the reduced function $\overline{f}$ is (locally) convex as well \cite[Thm.\ 2.22]{rockafellar_variational_2009} so that we can use a deritative-free method for minimizing $\overline{f}(\mathbf{a})$ in this setting. A promising candidate for such problems is the implicit filtering method, which approximates the derivative using finite-differences and adapts the stepsize according to its convergence \cite{Nocedal2006}.

\section{Algorithms}
\label{algorithm}
Based on the ideas presented in the previous section, we can design and analyze various joint alignment and reconstruction algorithms that aim to solve
\begin{equation}
\label{eq:reduced}
\min_{\mbf{a}} \overline{f}(\mbf{a}) + g(\mbf{a}),
\end{equation}
where $g(\mbf{a})$ is an appropriate regularization term. Below, we present a few prototype gradient-based algorithms for solving \eqref{eq:reduced} and discuss their convergence behaviour. To facilitate this discussion, we assume that $\overline{f}$ has a Lipschitz continuous gradient (which requires Lipschitz continuity of the gradient of $f$ w.r.t $\mathbf{a}$) and that $\overline{f} + g$ is strongly convex, i.e.,
\[
\frac{\mu}{2}\|\mbf{a} - \mbf{a}'\|_2^2 \leq \overline{f}(\mbf{a}) + g(\mbf{a}) - \overline{f}(\mbf{a}') - g(\mbf{a}')
\]
for some $\mu > 0$. As noted before, we do not expect $\overline{f}$ to be globally (strongly) convex, but it is reasonable to assume $\overline{f}$ to be locally (strongly) convex to facilitate the exposition. We can typically drop the assumption of strong convexity to mere convexity at the cost of a slower convergence rate. When dropping the requirement of convexity altogether, we are left with a non-linear optimization problem without any special structure and cannot state any convergence results.

\subsection{Joint reconstruction and alignment}
Based on the properties of the regularization term $g$, we can design a rule to update the alignment parameters.
A basic first-order algorithm for a smooth regularization term $g$ is shown in Algorithm~\ref{algorithm:smooth}.
\begin{algorithm}
\caption{Basic first order algorithm for solving the reduced optimization problem.}
\label{algorithm:smooth}
\begin{algorithmic}
\REQUIRE{Initial alignment parameters: $\mathbf{a}^{(0)}$, stepsizes: $\gamma^{(k)}$, tolerances: $\epsilon_k$}
\ENSURE{Final reconstruction and alignment parameters: $\mathbf{u}^{(k)}$, $\mathbf{a}^{(k)}$}
\STATE{$k = 0$}
\WHILE{not converged}
\STATE{$\mathbf{u}^{(k+1)} = \mathsf{reconstruct}(W(\mathbf{a}^{(k)}),\mathbf{p},\epsilon_k)$}
\STATE{$\mathbf{a}^{(k+1)} = \mathbf{a}^{(k)} - \gamma^{(k)}\left(G(\mathbf{a}^{(k)},\mathbf{u}^{(k+1)})^T\left(W(\mathbf{a}^{(k)})\mathbf{u}^{(k+1)} - \mathbf{p}\right) + \nabla g(\mbf{a})\right)$}
\STATE{$k = k + 1$}
\ENDWHILE
\end{algorithmic}
\end{algorithm}

Here and in the following, the operation $\mathsf{reconstruct}(W(\mathbf{a}^{(k)}),\mathbf{p},\epsilon_k)$  corresponds to solving the tomographic reconstruction problem at fixed alignment parameters
\begin{equation*}
 \min_{\mbf u } f(\mbf u, \mathbf{a}^{(k)})
\end{equation*}
up to a tolerance $\epsilon_k$ in the gradient $\|\nabla_{\mbf{u}} f(\mathbf{a}^{(k)},\mbf{u}^{(k+1)})\|_2 \leq \epsilon_k$. For quadratic regularizers $h$, the minimization can be performed efficiently via a conjugate-gradient method. For non-smooth convex penalties involving for example total variation or bound-constraints, Chambolle-Pock's algorithm \cite{ChambollePock2011} or FISTA \cite{Beck2009Teboulle_FISTA} are suitable choices.

The following theorem establishes convergence of Algorithm~\ref{algorithm:smooth} to the minimizer, $\overline{\mathbf{a}}$, of $\overline{f}$ for a fixed steplength.
\begin{theorem} \label{thm:ConvResult}
Let $\gamma^{(k)} = 1/L$ and $\|\nabla_{\mbf{u}} f(\mathbf{a}^{(k)},\mbf{u}^{(k+1)})\|_2 \leq \epsilon_k$ hold in each iteration $k$ of Algorithm~\ref{algorithm:smooth}. Then we have for some constant $C>0$
\[
\left(\overline{f}(\mathbf{a}^{(k+1)}) - \overline{f}(\overline{\mathbf{a}})\right) \leq (1 - \mu/L)\left(\overline{f}(\mathbf{a}^{(k)}) - \overline{f}(\overline{\mathbf{a}})\right) + \frac{C^2\epsilon_k^2}{2 L},
\]
where $L$ denotes the Lipschitz constant of $\nabla\overline{f} + \nabla g$ and $\mu$ is the modulus of strong convexity of $\overline{f} + g$.
\end{theorem}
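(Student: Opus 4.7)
The plan is to adapt the standard inexact-gradient-descent argument for smooth, strongly convex objectives, invoking Theorem~\ref{thm:inexact} to control the gradient error from the inexact inner reconstruction. Set $F := \overline{f} + g$ and $\mathbf{e}^{(k)} := \nabla\overline{f}_{\epsilon_k}(\mathbf{a}^{(k)}) - \nabla\overline{f}(\mathbf{a}^{(k)})$, so that by Theorem~\ref{thm:inexact} we have $\|\mathbf{e}^{(k)}\|_2 \leq C\epsilon_k$, and at stepsize $1/L$ the update of Algorithm~\ref{algorithm:smooth} reads $\mathbf{a}^{(k+1)} = \mathbf{a}^{(k)} - (1/L)\bigl(\nabla F(\mathbf{a}^{(k)}) + \mathbf{e}^{(k)}\bigr)$, i.e.\ an inexact gradient step on $F$.

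First I would apply the descent lemma for the $L$-Lipschitz-smooth function $F$ and substitute the update. At the specific stepsize $1/L$ the linear and quadratic cross-terms between $\nabla F(\mathbf{a}^{(k)})$ and $\mathbf{e}^{(k)}$ cancel exactly, yielding the clean inequality
\[
F(\mathbf{a}^{(k+1)}) \leq F(\mathbf{a}^{(k)}) - \tfrac{1}{2L}\|\nabla F(\mathbf{a}^{(k)})\|_2^2 + \tfrac{1}{2L}\|\mathbf{e}^{(k)}\|_2^2,
\]
into which the bound $\|\mathbf{e}^{(k)}\|_2^2 \leq C^2\epsilon_k^2$ can be inserted directly. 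Next I would invoke the Polyak--Lojasiewicz inequality $\|\nabla F(\mathbf{a})\|_2^2 \geq 2\mu\bigl(F(\mathbf{a}) - F(\overline{\mathbf{a}})\bigr)$, which is a standard consequence of $\mu$-strong convexity of $F$. Subtracting $F(\overline{\mathbf{a}})$ from both sides of the combined inequality produces exactly the claimed contraction
\[
F(\mathbf{a}^{(k+1)}) - F(\overline{\mathbf{a}}) \leq (1-\mu/L)\bigl(F(\mathbf{a}^{(k)}) - F(\overline{\mathbf{a}})\bigr) + \frac{C^2\epsilon_k^2}{2L}.
\]
The theorem's statement corresponds to the specialisation $g \equiv 0$, and the same argument applies verbatim to general smooth $g$ with $\overline{f}$ replaced by $F$.

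I do not anticipate any substantive obstacle: the choice $\gamma^{(k)} = 1/L$ is precisely what makes the cross-term cancellation in the descent step exact, so no Young-type balancing is needed and the error enters cleanly as $\tfrac{1}{2L}\|\mathbf{e}^{(k)}\|_2^2$. The only points requiring care are (i) uniformity of the constant $C$ from Theorem~\ref{thm:inexact} along the iterate trajectory, which follows from continuity of $\nabla^2_{\mathbf{a},\mathbf{u}}f$ and $(\nabla^2_{\mathbf{u},\mathbf{u}}f)^{-1}$ on a neighbourhood of $\overline{\mathbf{a}}$ under (A1)--(A4), and (ii) the implicit standing assumption that the iterates remain in the region where $F$ is $\mu$-strongly convex and $L$-smooth, as is typical for local convergence statements of this type.
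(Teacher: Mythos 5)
Your proposal is correct and follows essentially the same route as the paper: the paper's proof consists of bounding the gradient error via Theorem~\ref{thm:inexact} and then citing Lemma~2.1 of Friedlander and Schmidt for the inexact-gradient contraction, and your argument does exactly this, merely unpacking that cited lemma into its standard proof (descent lemma with exact cross-term cancellation at stepsize $1/L$, followed by the Polyak--Lojasiewicz consequence of $\mu$-strong convexity). Your added remarks on the uniformity of $C$ along the trajectory and on the $\overline{f}$ versus $\overline{f}+g$ bookkeeping are sensible clarifications of points the paper leaves implicit.
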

\begin{proof}
From theorem \ref{thm:inexact} we immediately obtain a bound of the error in the gradient. Applying \cite[Lemma 2.1]{Friedlander2012} gives the desired result.
\end{proof}

According to theorem \ref{thm:ConvResult}, Algorithm~\ref{algorithm:smooth} is linearly convergent under the given Lipschitz-continuity- and strong-convexity-assumptions if we decrease the tolerance $\epsilon_k$ at a linear rate (e.g., $\epsilon_k = r^k$ with $r < 1$) \cite{Friedlander2012}.

When the regularization term $g$ is convex but not differentiable, we can apply a proximal gradient algorithm, as stated in Algorithm~\ref{algorithm:prox}. Here, the proximal operator is defined as
\[
	\mathsf{prox}_{\gamma g}(\mathbf{x}) = \argmin_{\mbf{a}} \textstyle{\frac{1}{2}}\|\mbf{a} - \mbf{x}\|_2^2 + \gamma g(\mbf{a})
\]
This can be used, for example, to impose bound-constraints, i.e.\ admissible minimum and maximum values, on the alignment parameters.
\begin{algorithm}
\caption{Basic proximal-gradient algorithm for solving the reduced optimization problem.}
\label{algorithm:prox}
\begin{algorithmic}
\REQUIRE{Initial alignment parameters: $\mathbf{a}^{(0)}$, tolerance: $\epsilon_k$}
\ENSURE{Final reconstruction and alignment parameters: $\mathbf{u}^{(k)}$, $\mathbf{a}^{(k)}$}
\STATE{$k = 0$}
\WHILE{not converged}
\STATE{$\mathbf{u}^{(k+1)} = \mathsf{reconstruct}(W(\mathbf{a}^{(k)}),\mathbf{p},\epsilon_k)$}
\STATE{$\mathbf{a}^{(k+1)} = \mathsf{prox}_{\gamma^{(k)} g}\left[\mathbf{a}^{(k)} - \gamma^{(k)}\left(G(\mathbf{a}^{(k)},\mathbf{u}^{(k+1)})^T\left(W(\mathbf{a}^{(k)})\mathbf{u}^{(k+1)} - \mathbf{p}\right)\right)\right]$}
\STATE{$k = k + 1$}
\ENDWHILE
\end{algorithmic}
\end{algorithm}

Convergence of proximal-gradient algorithms with approximate gradients has been established by \cite{Schmidt2011}. Provided that $\overline{f}$ is strongly convex and that the error in the gradient decreases at a linear rate, Algorithm~\ref{algorithm:prox} converges linearly. If $\overline{f}$ is only convex, we obtain a sublinear convergence rate.

A variant of the proposed algorithms is stated in Algorithm~\ref{algorithm:alternating}. The method $\mathsf{align}(\mathbf{u},\mathbf{p},\delta)$  computes minimizing alignment parameters $\argmin_{\mathbf{a}} f(\mathbf{a},\mathbf{u}^{(k)})$ up to some tolerance $\delta$.
Convergence of Algorithm~\ref{algorithm:alternating} can be guaranteed in case $f(\mathbf{a},\mathbf{u})$ is convex in $\mathbf{a}$ as well as $\mathbf{u}$ \cite{Bleyer2013,Bleyer2015}. Convergence rates for this algorithm have, to the best of our knowledge, not been established.

\begin{algorithm}
\caption{Alternating minimization algorithm for solving the optimization problem.}
\label{algorithm:alternating}
\begin{algorithmic}
\REQUIRE{Initial alignment parameters: $\mathbf{a}^{(0)}$, tolerances: $\epsilon_k, \delta_k$}
\ENSURE{Final reconstruction and alignment parameters: $\mathbf{u}^{(k)}$, $\mathbf{a}^{(k)}$}
\STATE{$k = 0$}
\WHILE{not converged}
\STATE{$\mathbf{u}^{(k+1)} = \mathsf{reconstruct}(W(\mathbf{a}^{(k)}),\mathbf{p},\epsilon_k)$}
\STATE{$\mathbf{a}^{(k+1)} = \mathsf{align}(\mathbf{u}^{(k+1)},\mathbf{p},\delta_k)$}
\STATE{$k = k + 1$}
\ENDWHILE
\end{algorithmic}
\end{algorithm}
\begin{figure}[hbt!]
\centering
\includegraphics[scale=.5]{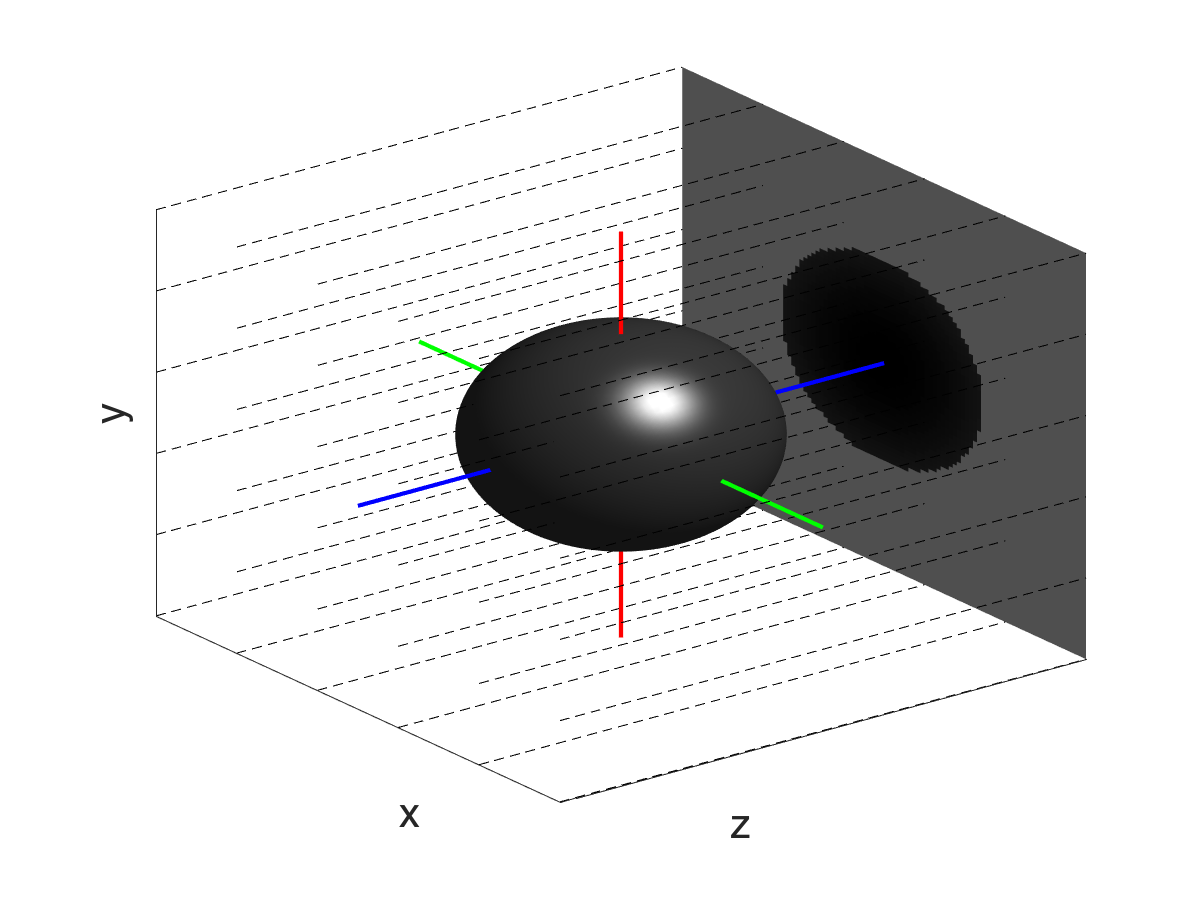}
\caption{Alignment parameters for the parallel-beam setup: \emph{in-plane} (around the $z$-axis), \emph{nod/pitch} (around the $x$-axis) and \emph{tomographic} (around the $y$-axis) rotations and \emph{lateral} (along the $x$-axis) and \emph{axial} (along the $y$-axis) shifts.}
\label{fig:alignment}
\end{figure}

\section{Results} \label{S:Results}
\label{results}
The following results have been obtained with a custom implementation of a three-dimensional projection operator for parallel-beam setups that allows for full three-dimensional alignment of all projection images independently. The alignment parameters are taken to be three rotations and two shifts relative to the direction of the beam, as shown in figure \ref{fig:alignment}. We refer to the rotations as \emph{in-plane} (around the $z$-axis), \emph{nod/pitch} (around the $x$-axis) and \emph{tomographic} (around the $y$-axis) and the shifts as \emph{lateral} (along the $x$-axis), \emph{axial} (along the $y$-axis). For the considered parallel beam setup, shifts along the optical axis (blue axis in figure \ref{fig:alignment}) do not influence the projections.
The tomographic projector is implemented in Matlab, exploiting its GPU capabilities to achieve computational efficiency. Implementation Details on the projection operators, the corresponding Jacobian, the reconstruction and alignment algorithms are described in the appendix. In all of the subsequent numerical experiments, we simulate the misaligned tomographic data using \emph{trilinear}
 interpolation, but applied the \emph{bicubic} scheme from \ref{SS:interpolation} in the joint alignment and reconstruction.

\begin{figure}
\centering
\includegraphics[scale=1]{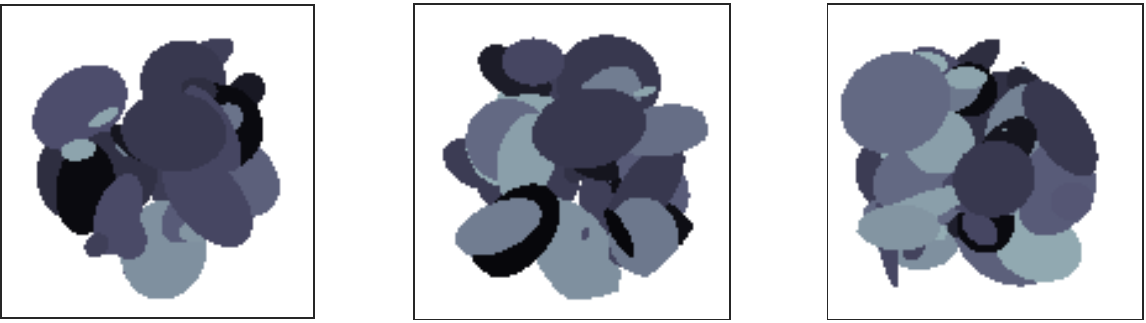}\\
\small{(phantom: central ortho-slices along the different coordinate planes)}\\
\includegraphics[scale=1]{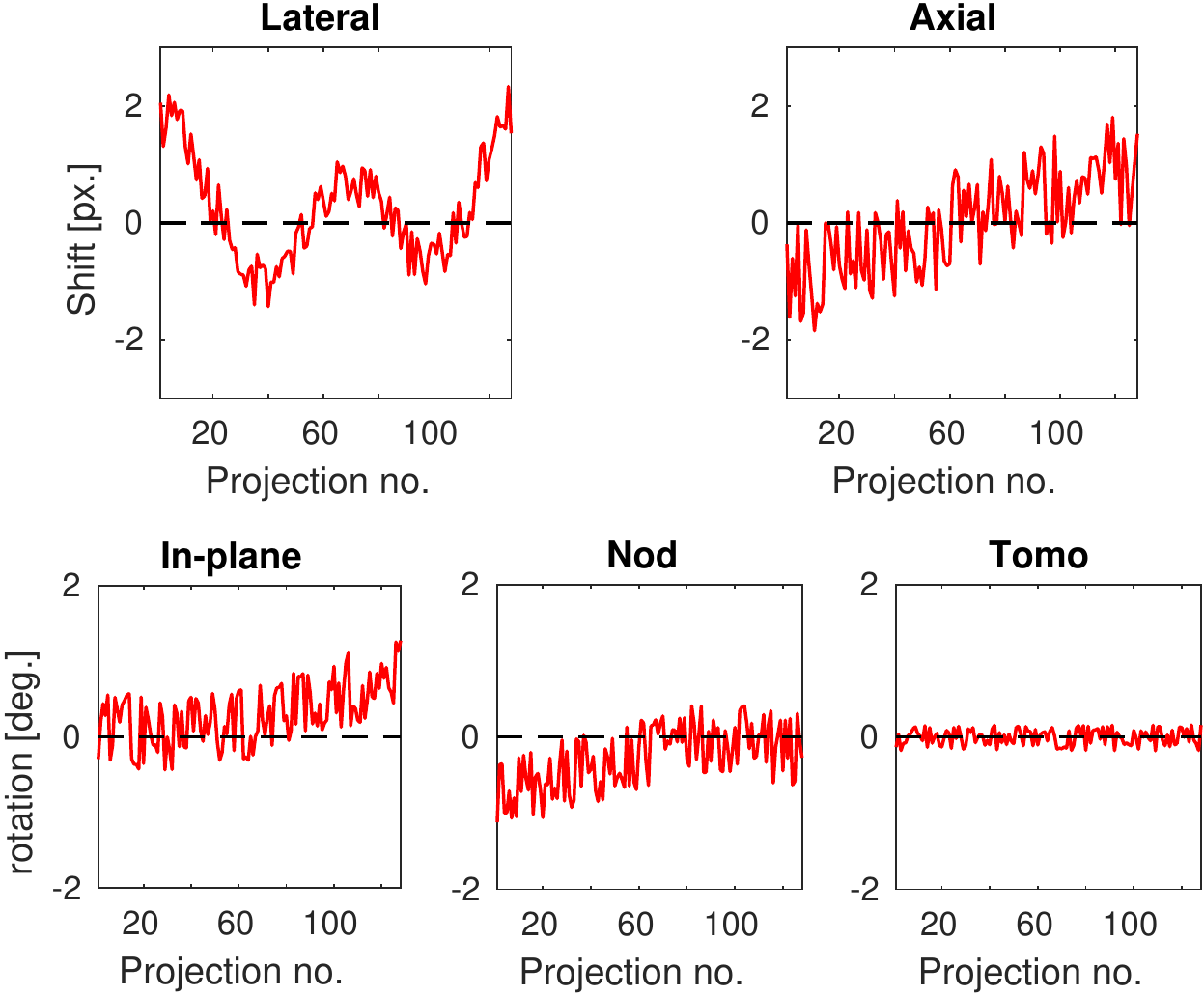}\\
\small{(simulated misalignment in the acquisition parameters: lateral- and axial shifts $s_x, s_y$ and in-plane-, nod- and tomographic rotations $\theta_{xy}, \theta_{yz}, \theta_{xz}$, compare figure~\ref{fig:alignment})}\\ 
\caption{Simulated phantom and perturbations of the alignment parameters in the convergence experiments of section \ref{SS:ConvExp}.}
\label{fig:phantom}
\end{figure}

\subsection{Convergence experiments} \label{SS:ConvExp}
We test algorithms \ref{algorithm:smooth} and \ref{algorithm:alternating} on a phantom of size $128\times 128\times 128$, shown in figure \ref{fig:phantom}. We consider a parallel-beam setup with 128 tomographic projections uniformly covering the full angular range $[0;\, 180^\circ]$. We simulate noise-free tomographic data under small perturbations of the alignment parameters that are shown in figure \ref{fig:phantom}. For tomographic reconstruction,  we use the Conjugate Gradient (CG) method to solve the gradient-penalized Tikhonov functional in \eqref{eq:TikhonovTomoRec}.
The regularization paramater is fixed to $\alpha = 10^3$. In the experiments, the reconstruction tolerance $\epsilon$ (cf.\ Algorithm \ref{algorithm:smooth}) is a relative tolerance (relative to the initial norm of the gradient) and we control the alignment tolerance $\delta$ (cf.\ Algorithm \ref{algorithm:alternating}) by choosing the number of inner alignment gradient-descent iterations $N_{\text{align}}$. A large $N_{\text{align}}$ thus corresponds to a smaller $\delta$. for $N_{\text{align}} = 1$, algorithms \ref{algorithm:smooth} and \ref{algorithm:alternating} are obviously identical.

\begin{figure}
\centering
\includegraphics[scale=1]{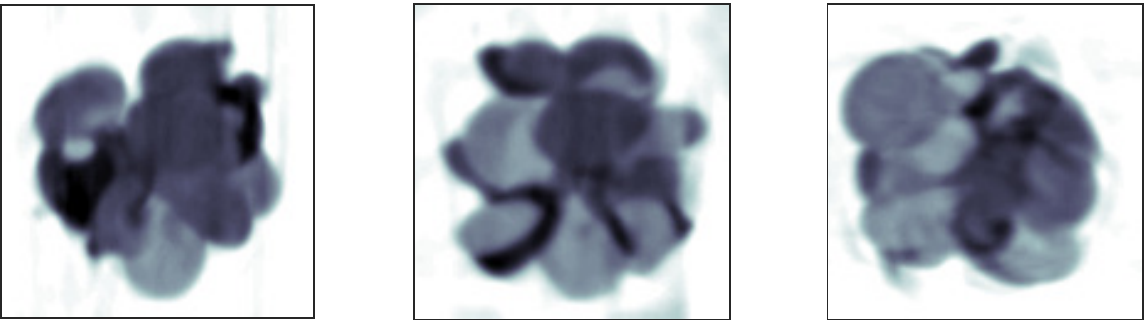}\\
\small{(un-aligned reconstruction: same ortho-slices as in figure \ref{fig:phantom})}\\
\includegraphics[scale=1]{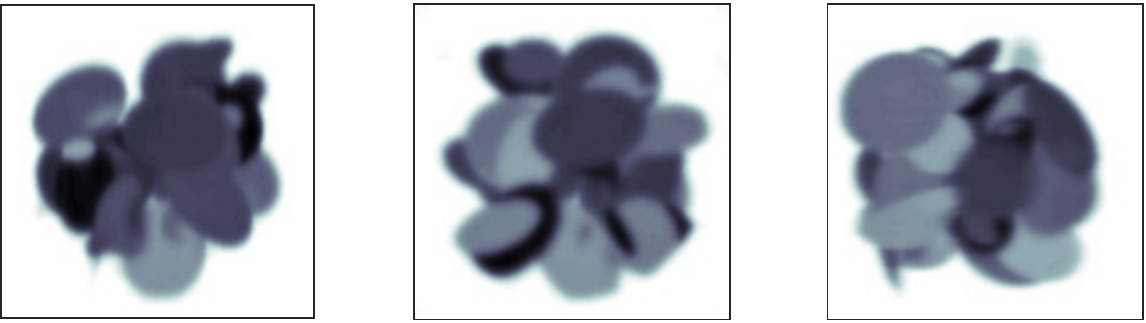}\\
\small{(aligned reconstruction for $\epsilon = 10^{-4}$, $N_{\text{align}} = 1$)}\\
\includegraphics[scale=1]{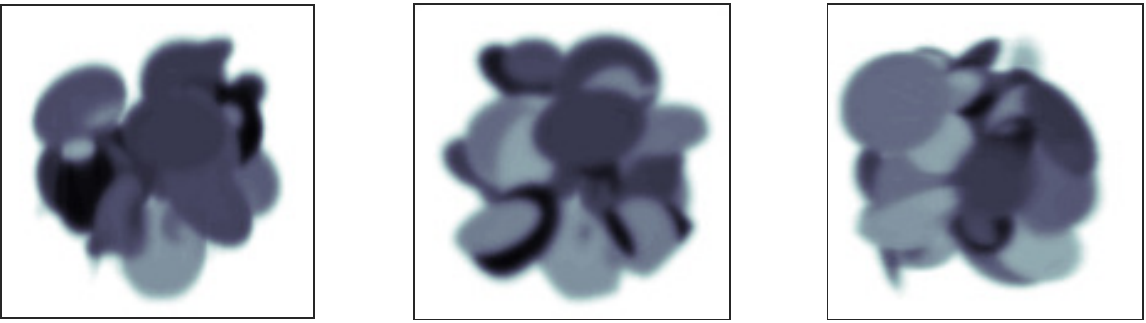}\\
\small{(reconstruction using the true alignment parameters)}\\
\caption{Reconstructed object from the initially guessed alignment parameters $\mbf{a}^{(0)}$ (un-aligned) and using the final reconstruction $\mbf{a}^{(50)}$ (aligned), respectively.
For comparison, we also show the reconstruction obtained using the true alignment parameters.}
\label{fig:reconstructions}
\end{figure}

Figure \ref{fig:reconstructions} shows the un-aligned and aligned reconstruction for $\epsilon = 10^{-4}$, $N_{\text{align}} = 1$, as well as the reconstruction with the true alignment parameters. The un-aligned reconstruction clearly shows blurring and streaks, which disappear in the aligned reconstruction. Moreover, the aligned reconstruction is visually indistinguishable from the one using the true alignment parameters.
The fitted alignment parameters are plotted in \ref{fig:alignments}. We see that the shifts are fairly well-recovered in all cases. However, the fit of the reconstruction of the rotational misalignment breaks down for the lowest CG-accuracy $\epsilon = 10^{-1}$. For smaller reconstruction-tolerances, all alignment parameters except for the tomographic angle are recovered reasonably well.
The reconstruction of the latter remains corrupted by low-frequency errors, which may point to an inherent non-uniqueness in the problem.

\begin{figure}
\centering
\begin{tabular}{cc}
\includegraphics[scale=.5]{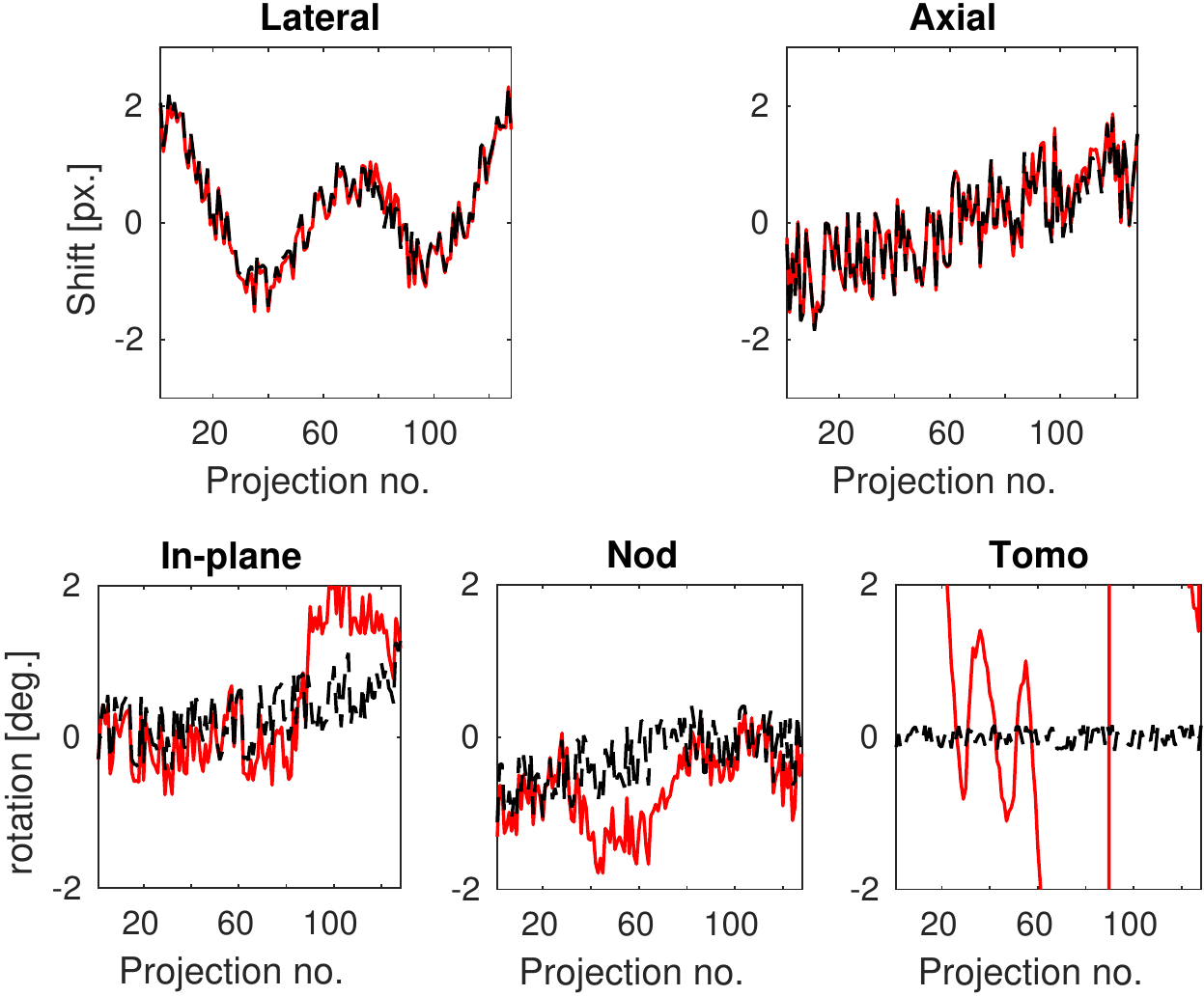}&
\includegraphics[scale=.5]{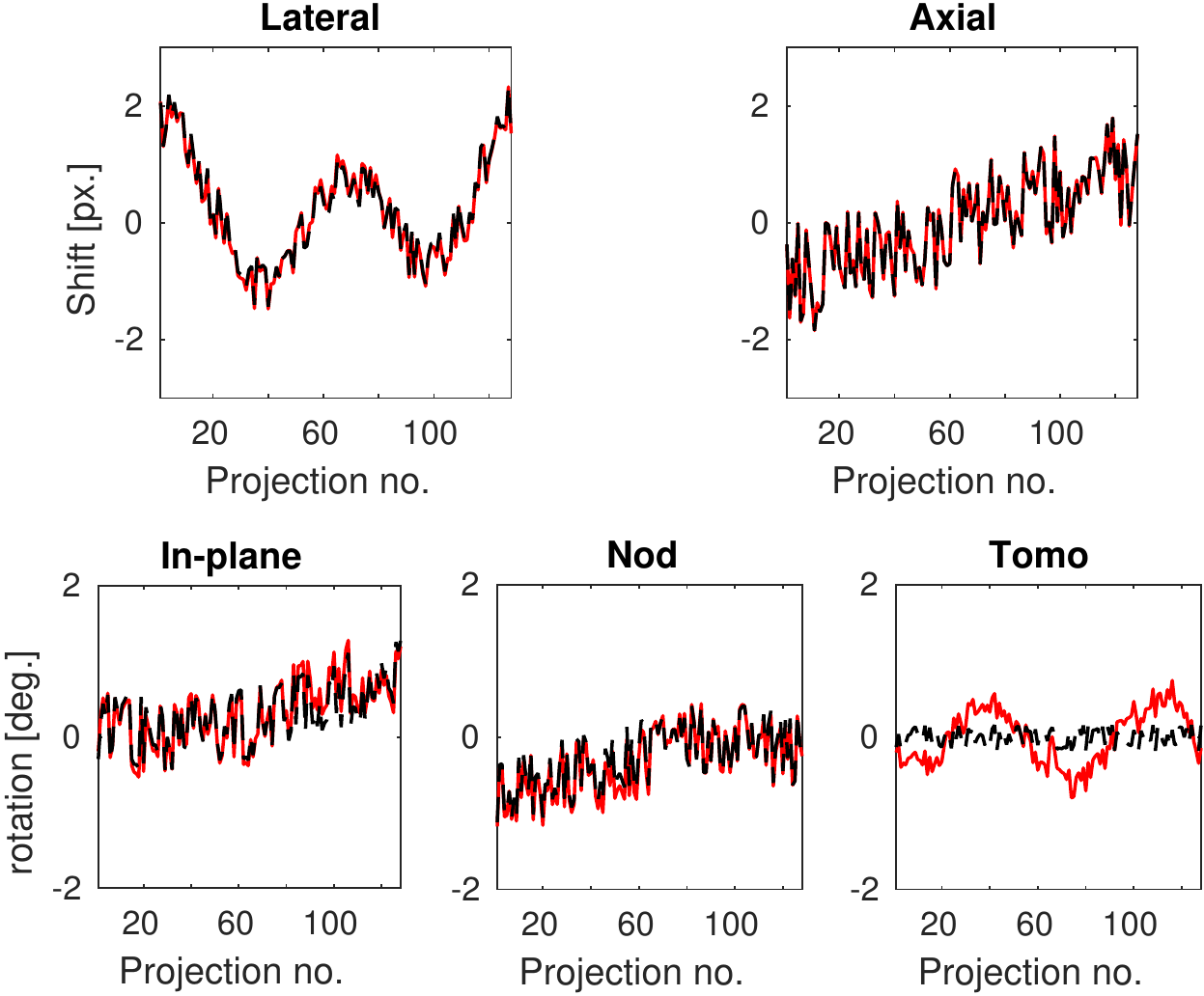}\\
\small{($\epsilon = 10^{-1}$)}&
\small{($\epsilon = 10^{-2}$)}\\
\includegraphics[scale=.5]{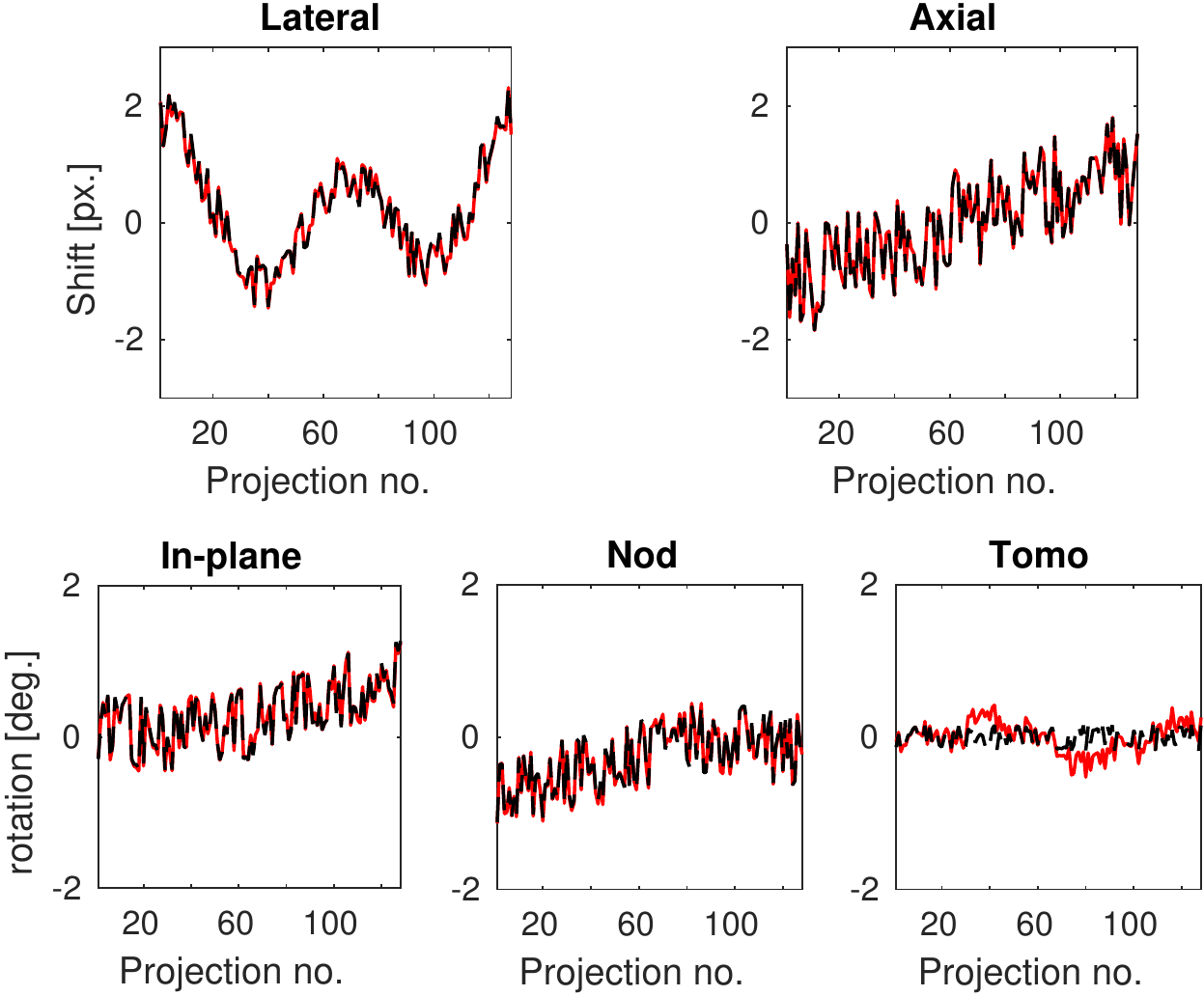}&
\includegraphics[scale=.5]{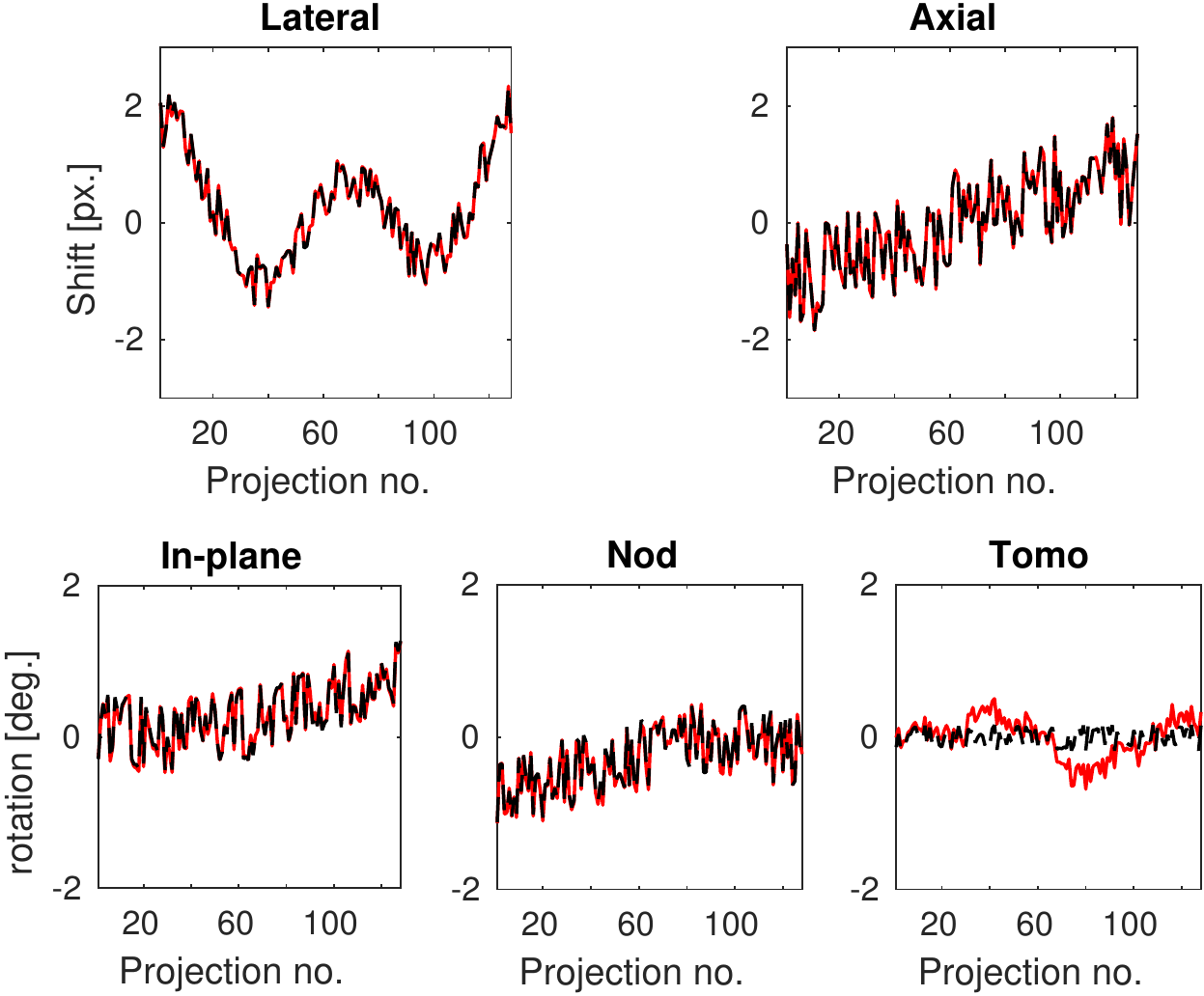}\\
\small{($\epsilon = 10^{-3}$)}&
\small{($\epsilon = 10^{-4}$)}\\
\end{tabular}
\caption{Alignment parameters reconstructed with Algorithm \ref{algorithm:smooth} after 50 iterations for different reconstruction tolerances $\epsilon$ of the CG-method.}
\label{fig:alignments}
\end{figure}

In figure \ref{fig:convergences}, we show the convergence of Algorithm \ref{algorithm:alternating} in terms of the optimality ($\|\nabla\overline{f}\|_2$), the object-reconstruction error and the alignment error (the difference between the true and reconstructed alignment parameters).
We note that using a larger reconstruction tolerance may lead to a premature stalling of the convergence, or even to divergence, compare results for  $\epsilon = 10^{-1}$.
Interestingly, a large $\epsilon$ also seems to result in a slightly faster decrease of the alignment error in the first few iterations. This might be explained by the fact that fewer CG-iterations lead to a smoother recontruction, which is easier to align as argued in \ref{SS:tomoReconMethod}. Furthermore, we observe that the alignment error (right column) always exhibits a slight increase after an initial phase of convergence. This is due to the emerging low-frequency errors in the fitted tomographic angle observed in figure \ref{fig:alignments}.
We also see that increasing the number of alignment steps ($N_{\text{align}}$) leads to slightly more optimal results (left column) but not to a higher accuracy in terms of the reconstruction and even gives a slightly larger alignment error. We may thus conclude that performing more than a single alignment update per iteration (cf.\ Algorithm \ref{algorithm:alternating}) is not necessarily better than Algorithm \ref{algorithm:smooth}.

\clubpenalty = 10000

\begin{figure}
\centering
\begin{tabular}{ccc}
\includegraphics[scale=.3]{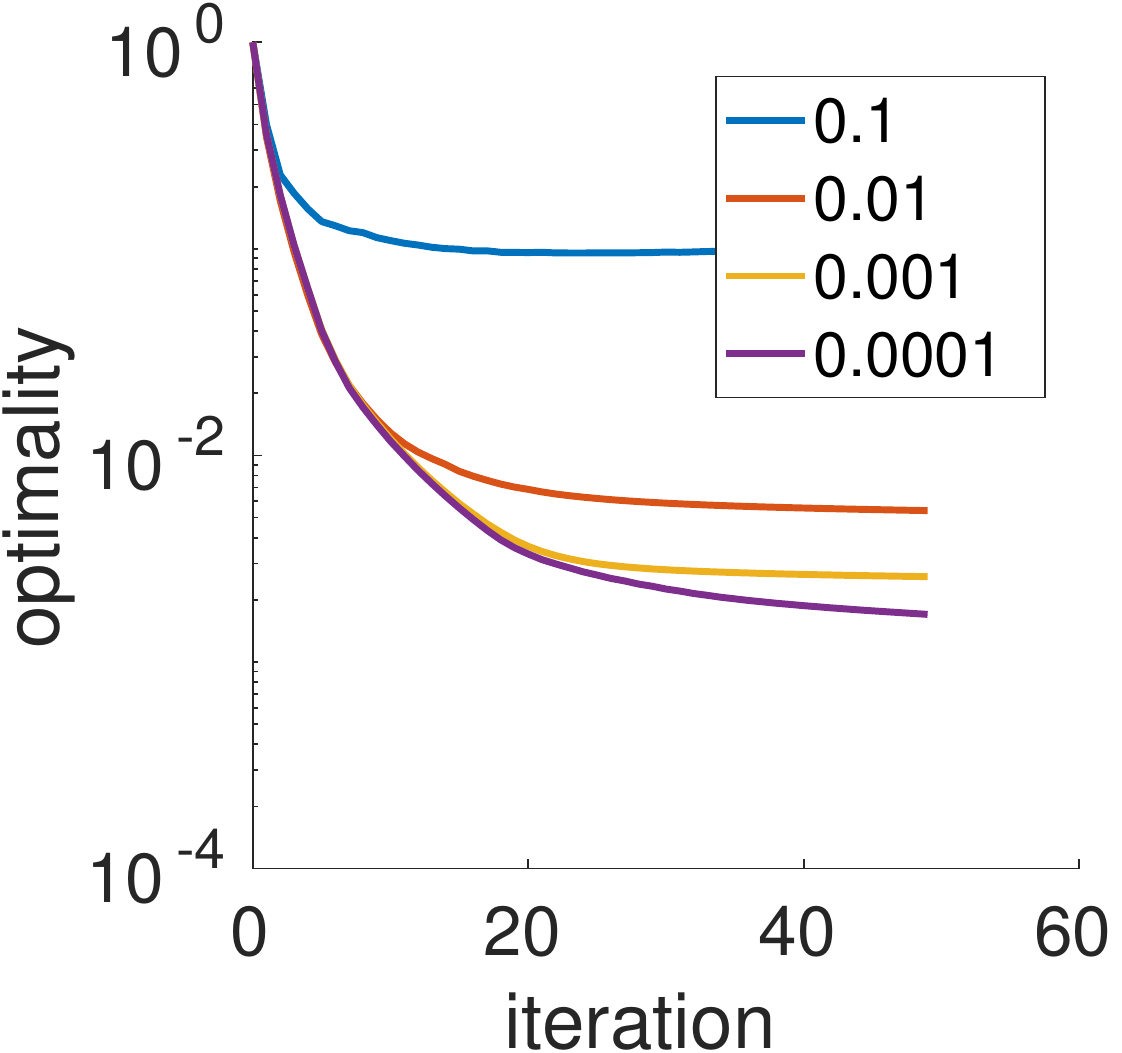}&
\includegraphics[scale=.3]{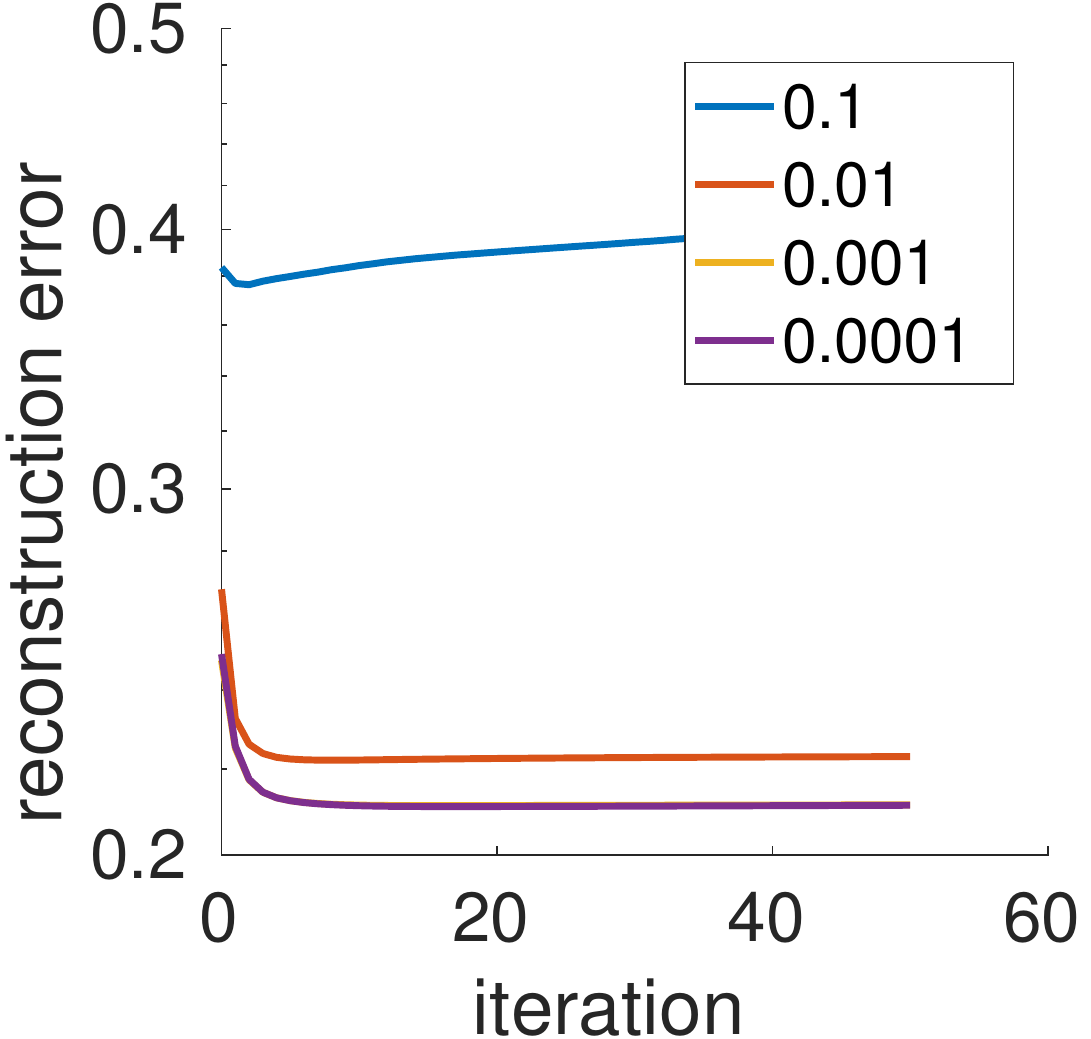}&
\includegraphics[scale=.3]{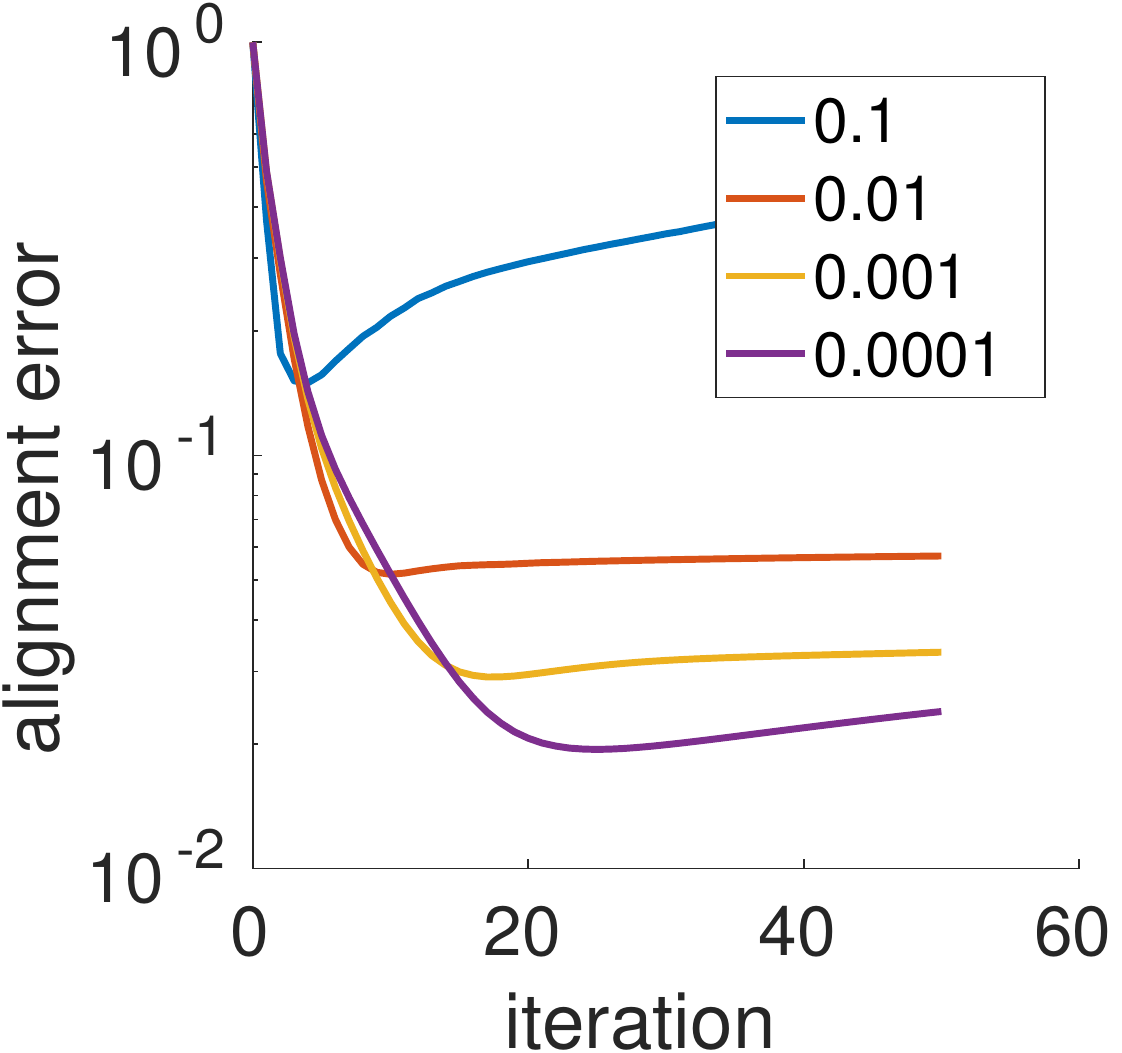}\\
&\small{$N_{\text{align}} = 1$}&\\
\includegraphics[scale=.3]{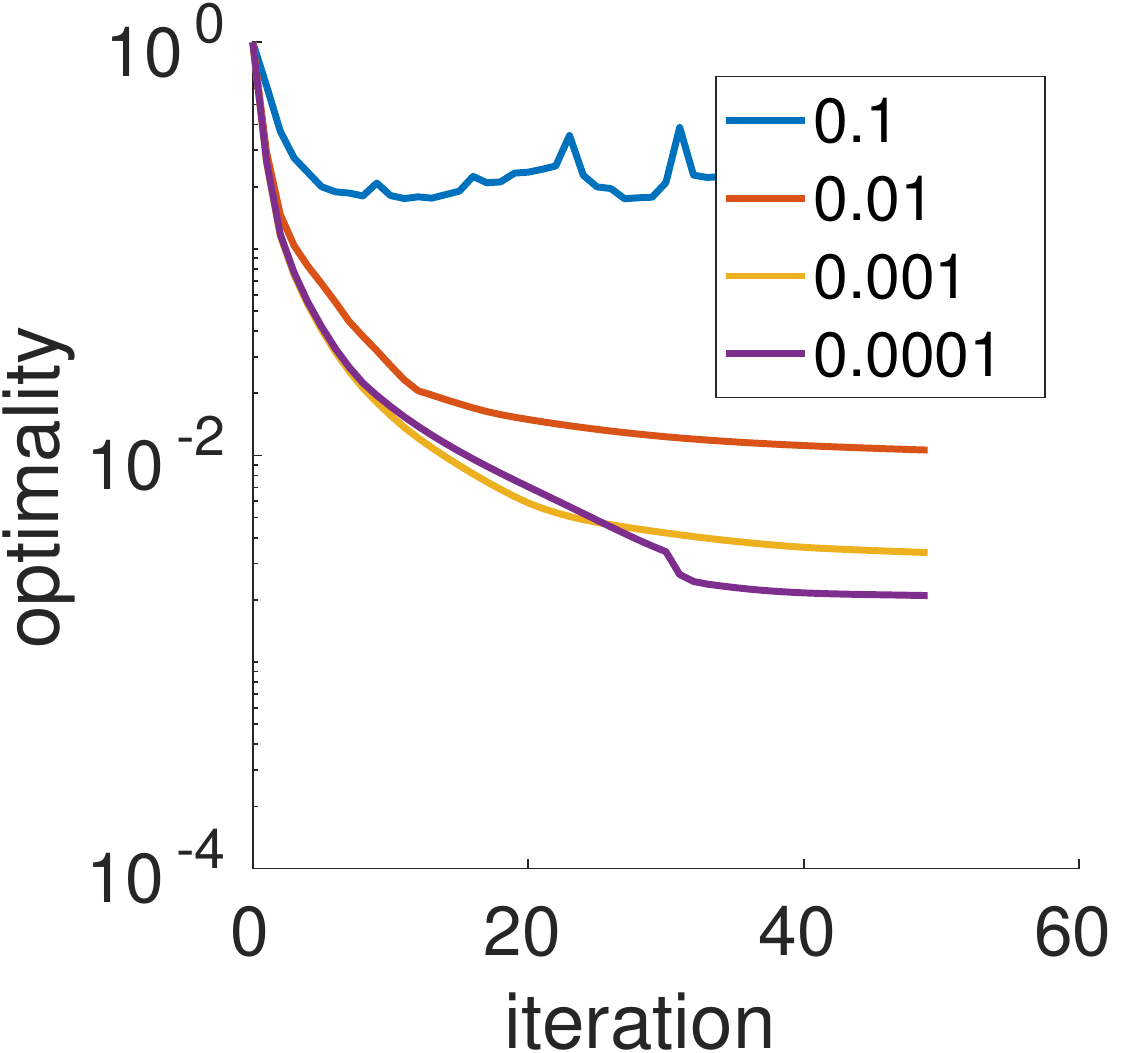}&
\includegraphics[scale=.3]{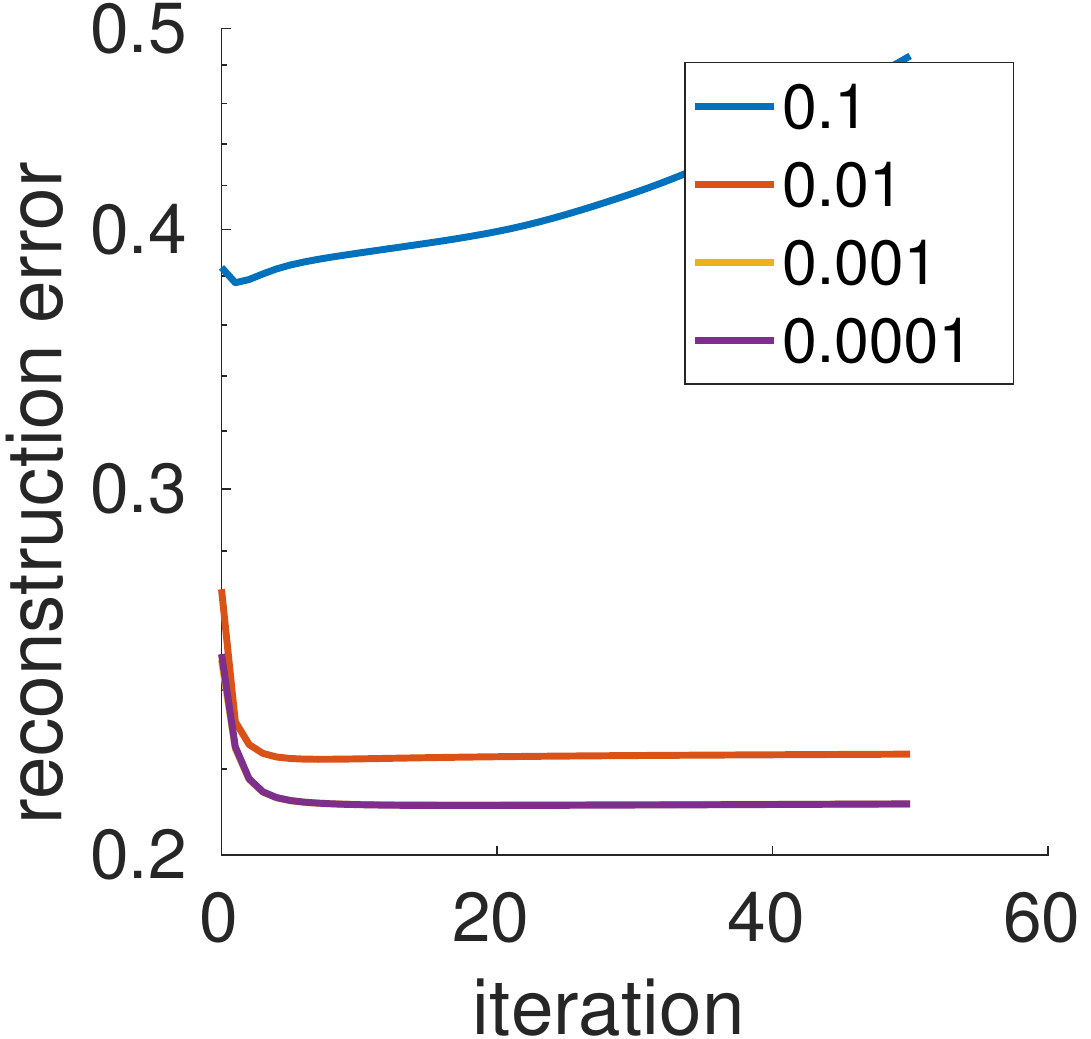}&
\includegraphics[scale=.3]{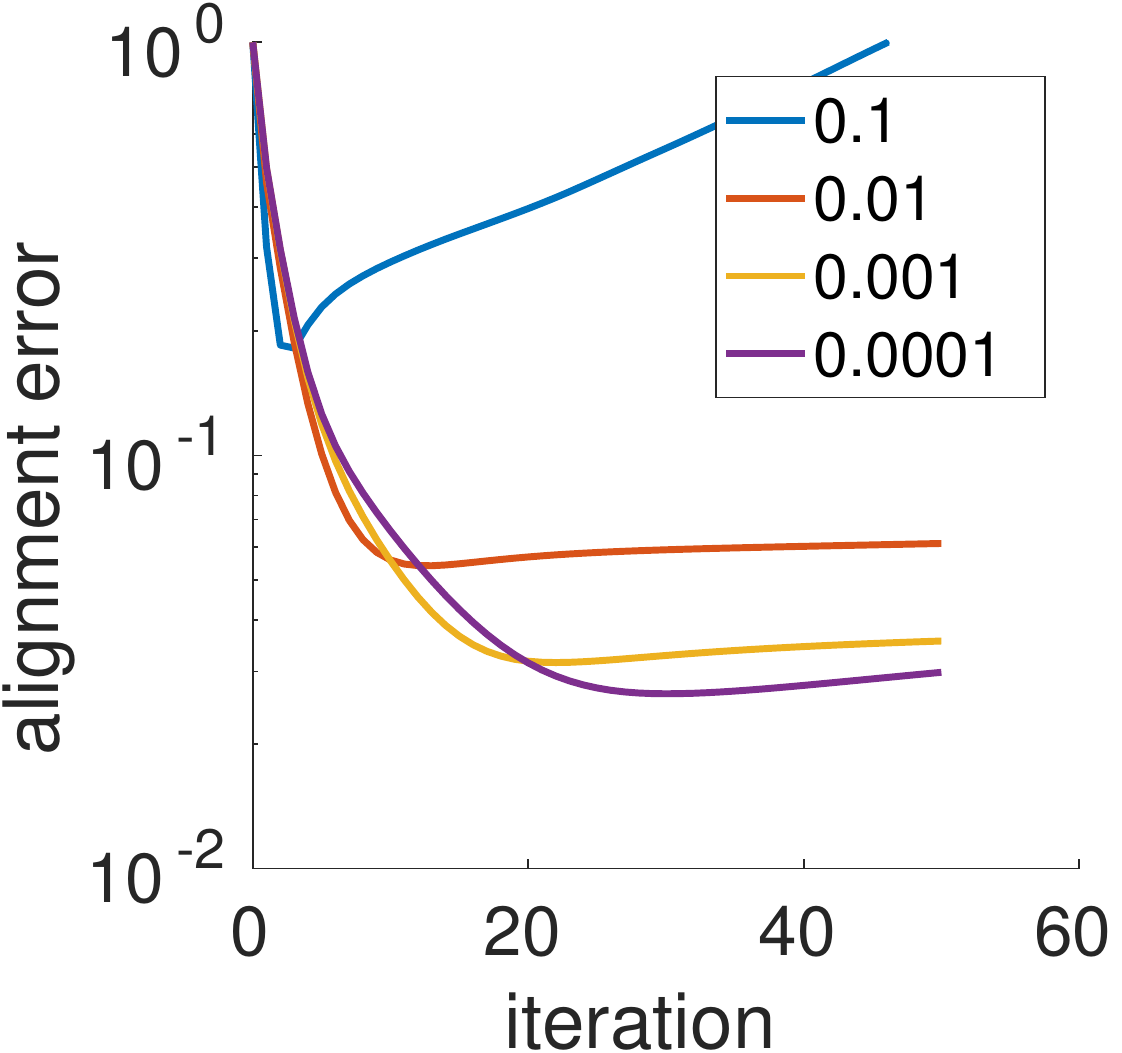}\\
&\small{$N_{\text{align}} = 2$}&\\
\includegraphics[scale=.3]{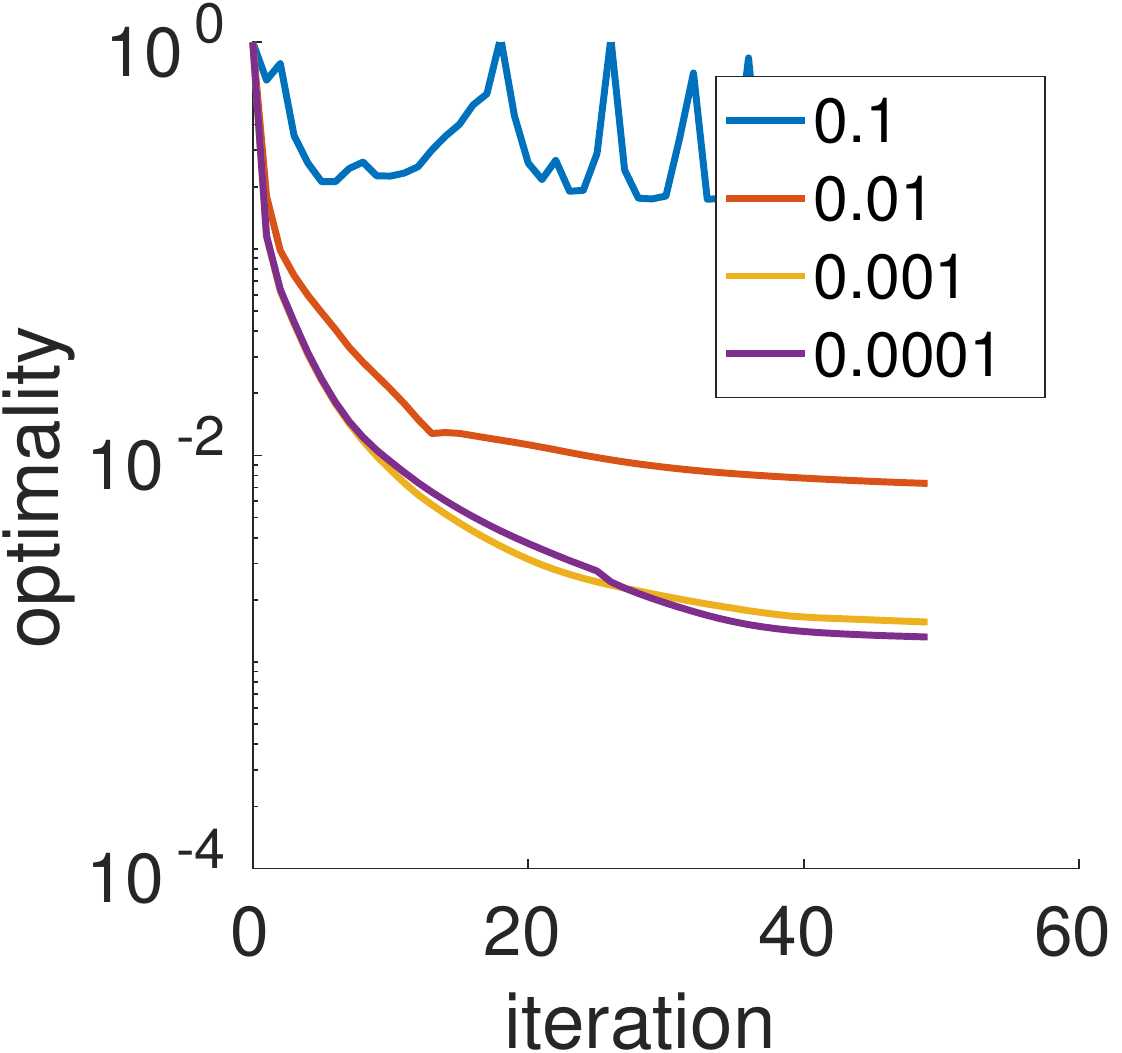}&
\includegraphics[scale=.3]{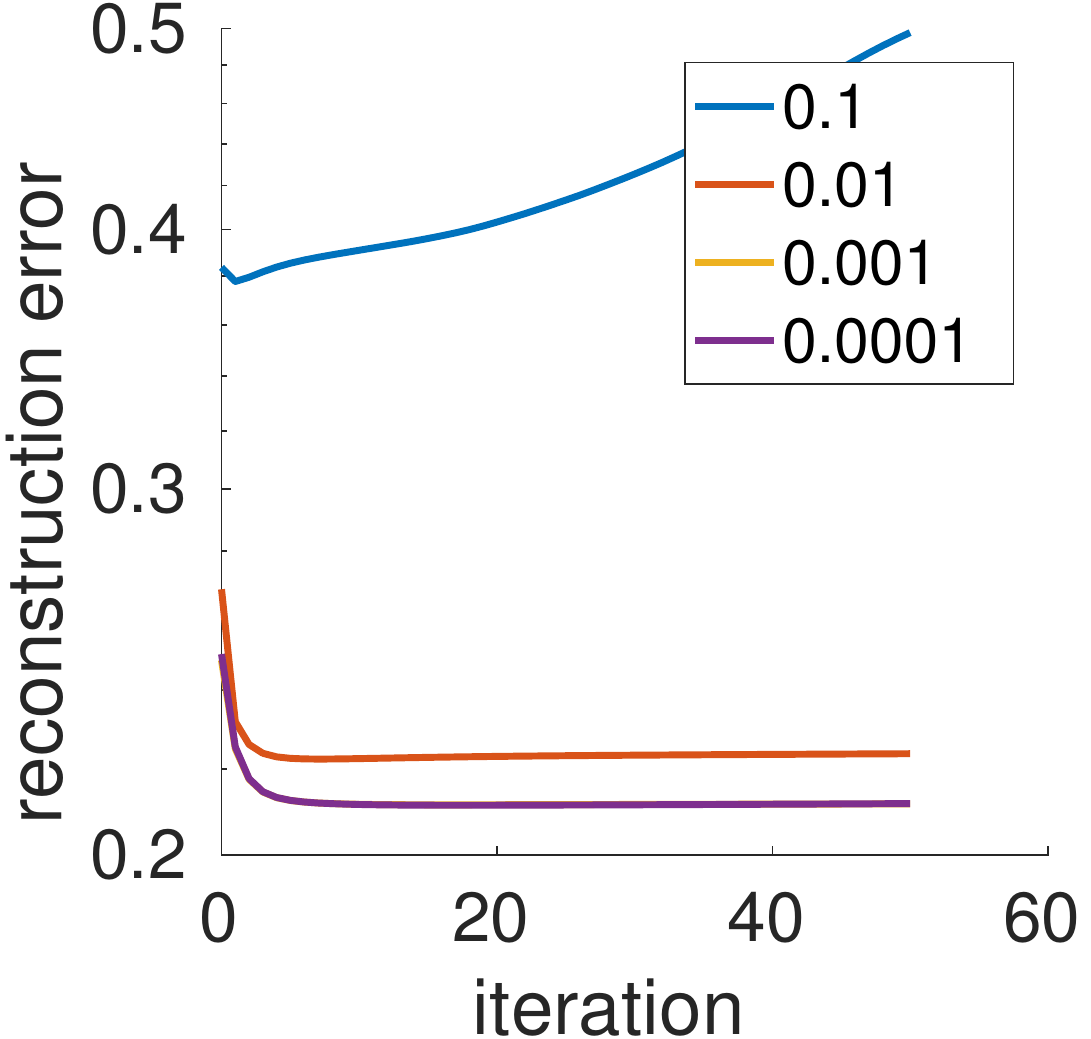}&
\includegraphics[scale=.3]{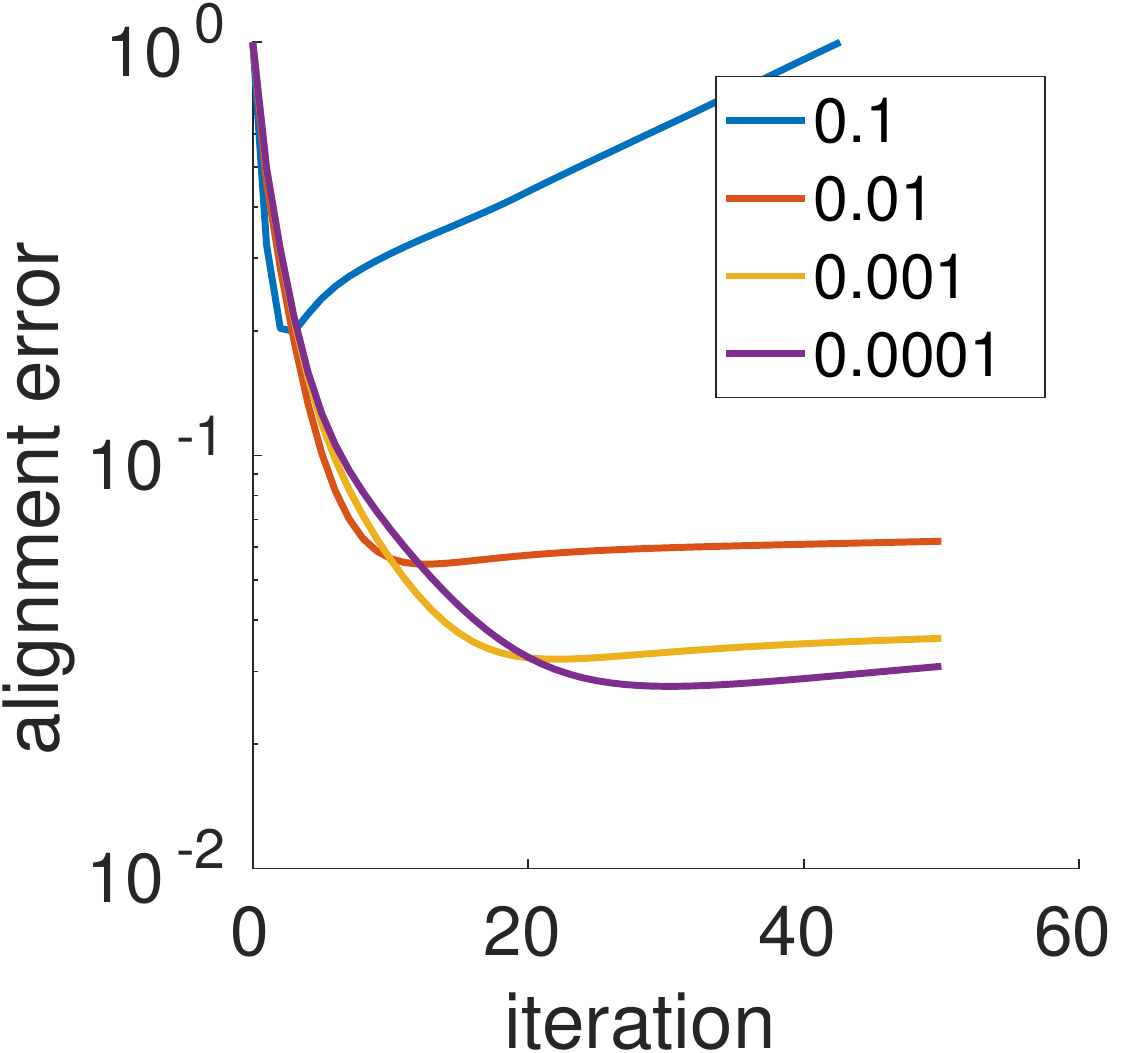}\\
&\small{$N_{\text{align}} = 4$}&\\
\includegraphics[scale=.3]{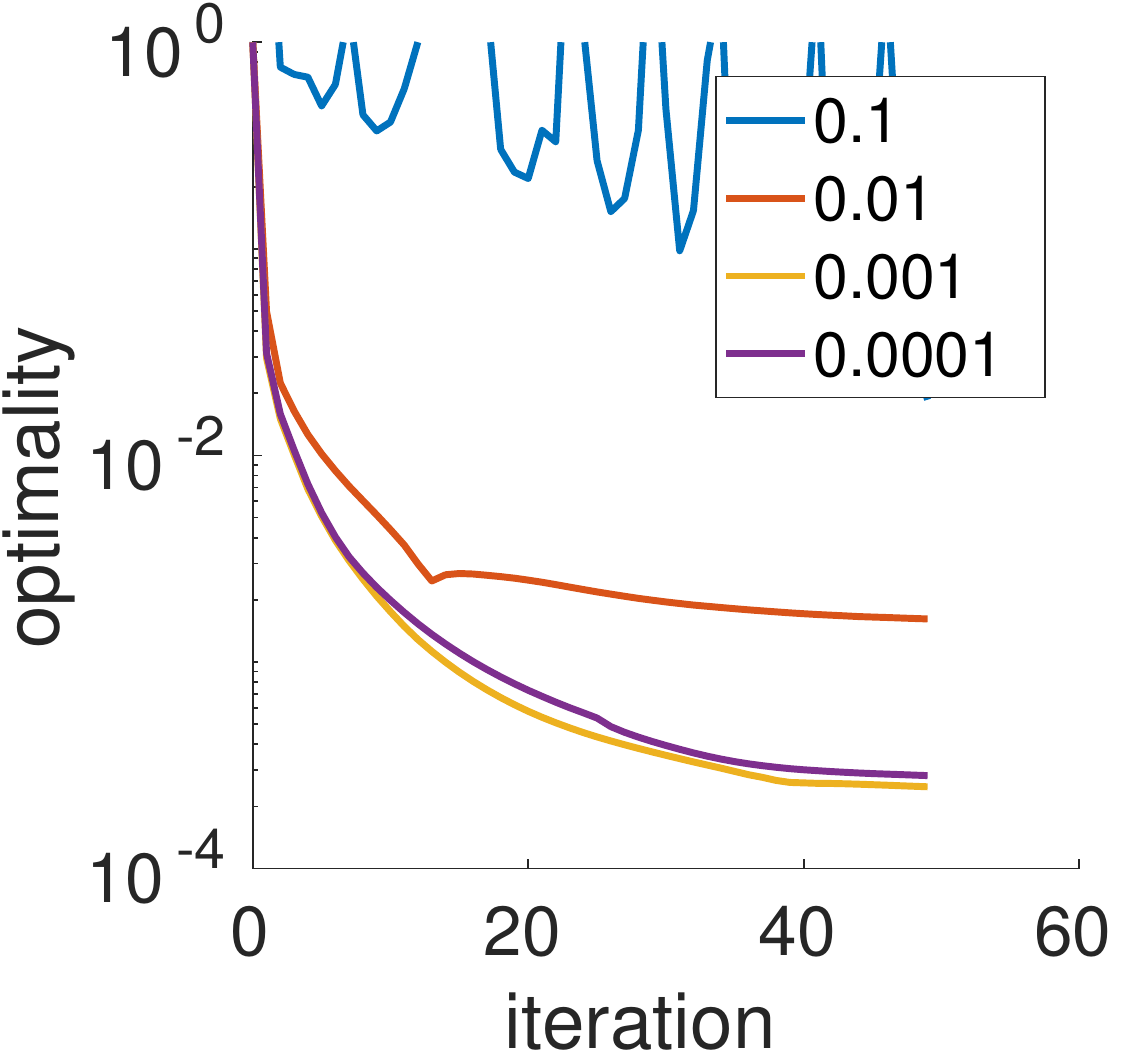}&
\includegraphics[scale=.3]{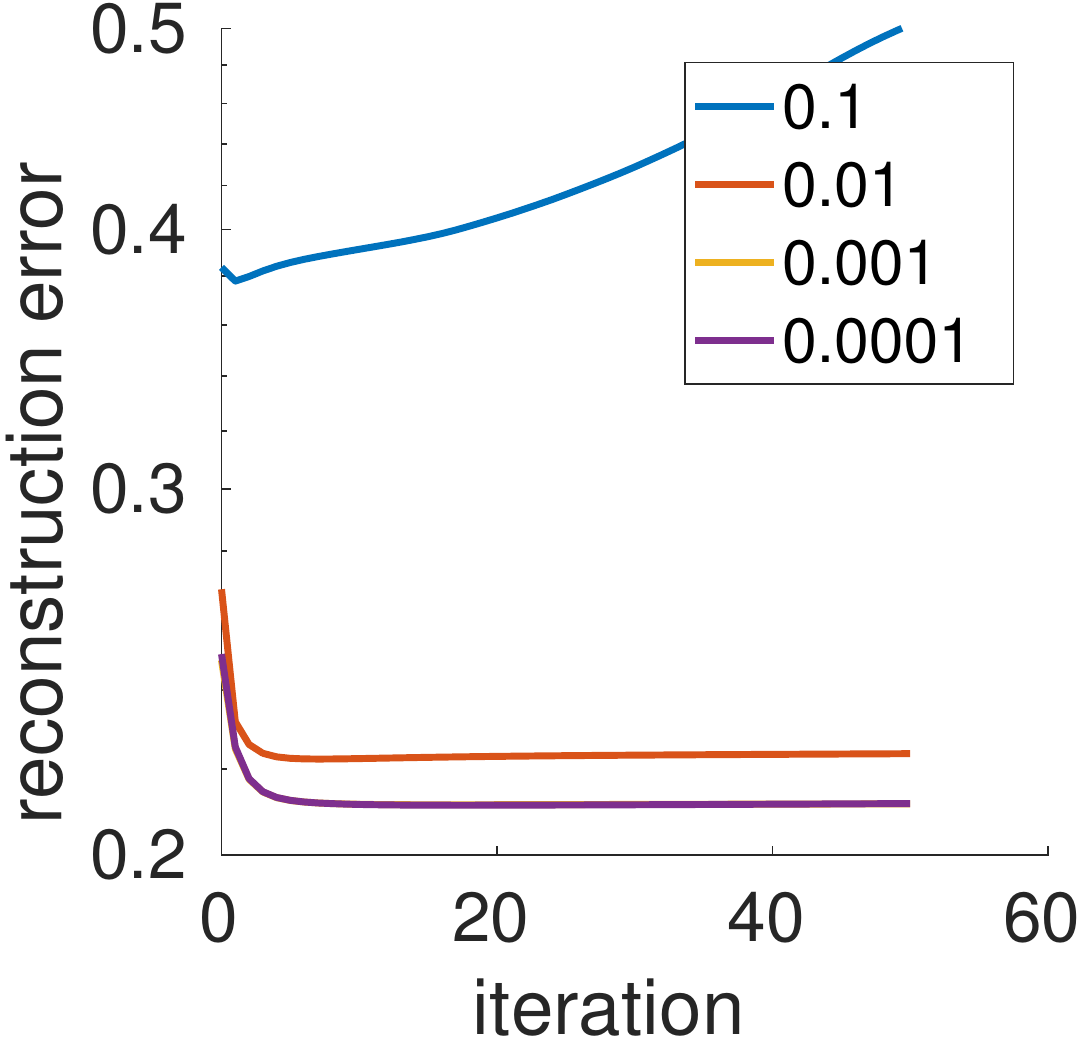}&
\includegraphics[scale=.3]{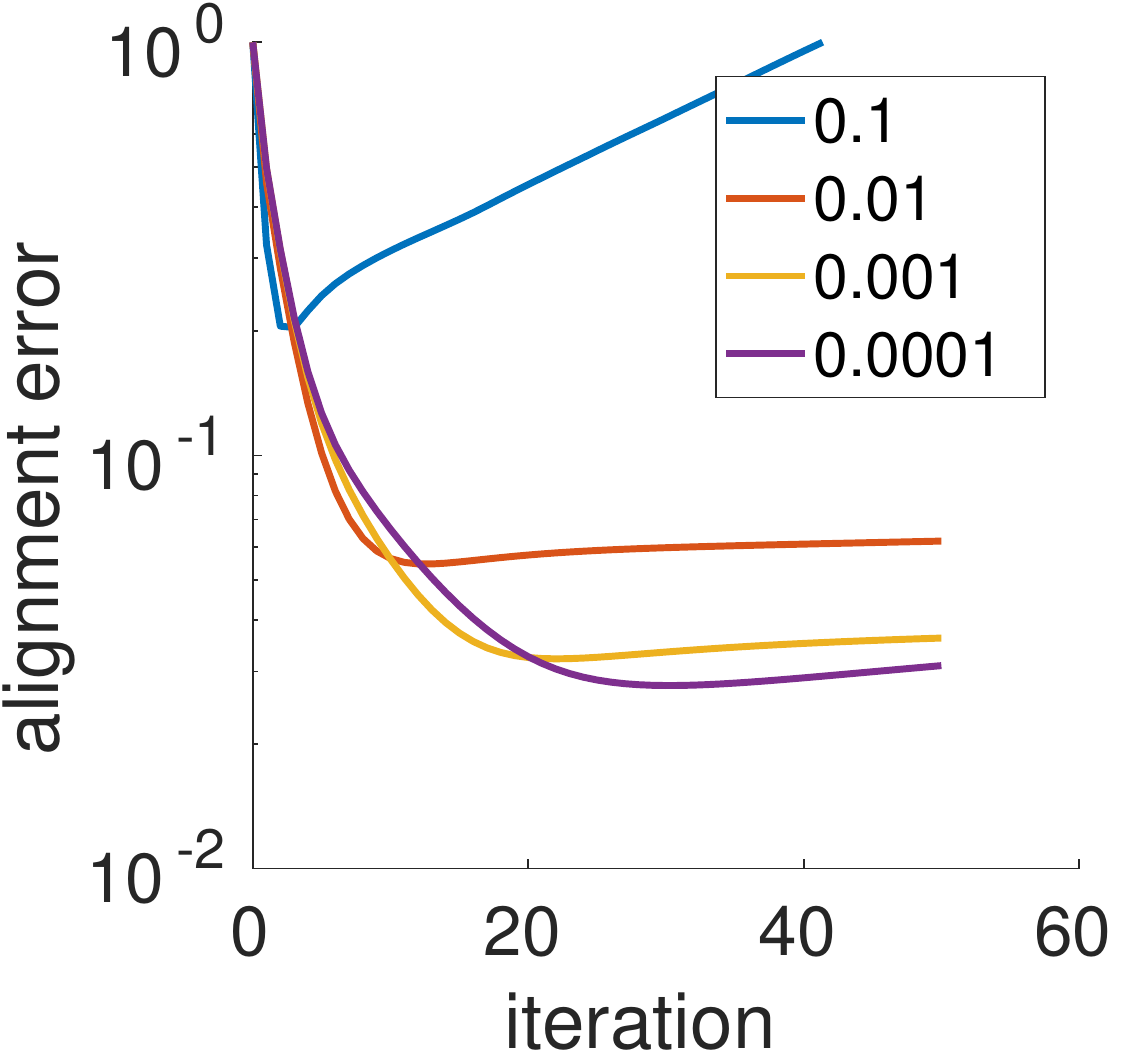}\\
&\small{$N_{\text{align}} = 10$}&\\
\end{tabular}
\caption{Convergence in terms of optimality, reconstruction error and alignment error for various reconstruction-tolerances $\epsilon$. Different rows represent varying numbers of outer versus inner alginment iterations. The top row corresponds to Algorithm \ref{algorithm:smooth}; the remaining rows correspond to Algorithm \ref{algorithm:alternating} with an increasing number of inner alignment iterations. We see that performing more than one alignment step per outer iteration does not necessarily lead to a better reconstruction and alignment.}
\label{fig:convergences}
\end{figure}

The final aspect we wish to investigate in these experiments is the sensitivity of the iterative approach to the magnitude of the alignment errors. To this end, we use the exact same alignment errors as in the previous experiments, except that we increase their magnitude by a factor of 2, 4, 8 and 16 respectively. For the alignment, we run Algorithm \ref{algorithm:smooth} with $\epsilon = 10^{-4}$ for a total of 20 outer iterations. The regularization parameter $\alpha$ is chosen using a heuristic described in section \ref{SS:tomoReconMethod}. This heuristic is based on an estimate of the misalignment and effectively assigns a higher $\alpha$ (ranging from roughly $10^1$ to $10^4$ in our experiments) for larger misalignment. In our experience, this aspect is quite crucial since a smoother reconstruction, as induced by a larger $\alpha$, improves the initial convergence of the alignment. On the contrary, choosing $\alpha$ too low may cause the alignment to stall prematurely. In practice, one could of course use a continuation, starting from large $\alpha$ and reducing it as the alignment improves.
The resulting final reconstructions (computed using $\alpha = 10^{2}$ fixed) are shown in figure \ref{fig:magnitude1} and the corresponding alignments are shown in figure  \ref{fig:magnitude2}. Even for the most severe initial misalignment, we observe that all alignment parameters are accurately recovered, except for the tomographic angle. If the misalignment exceeds the angular sampling, it becomes exceedingly hard to find the correct projection angles as nearby projections are similar. As long as enough of the projections have been aligned
correctly, the effect on the final reconstruction is not dramatic. Overall, the experiment shows that the proposed alignment procedure deteriorates gracefully as the initial misalignment increases.

\begin{figure}
\centering
\begin{tabular}{cc}
\includegraphics[scale=.6]{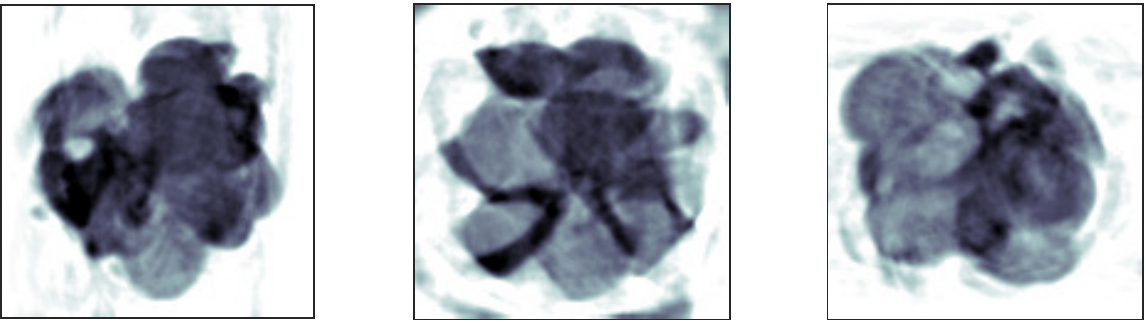}&\includegraphics[scale=.6]{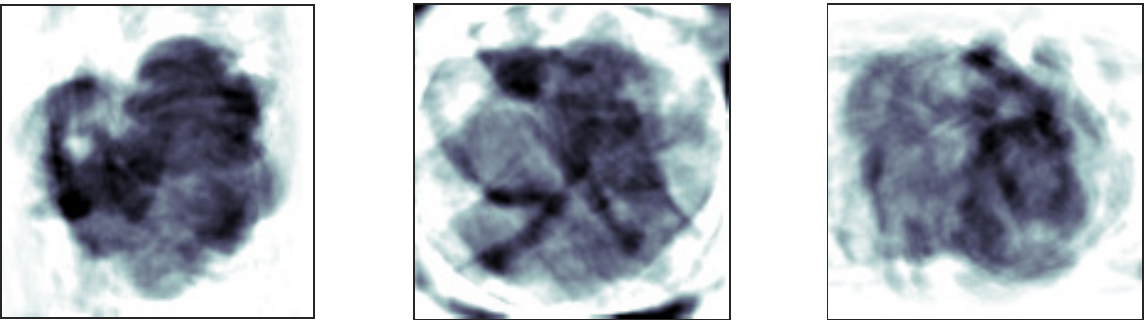}\\
\includegraphics[scale=.6]{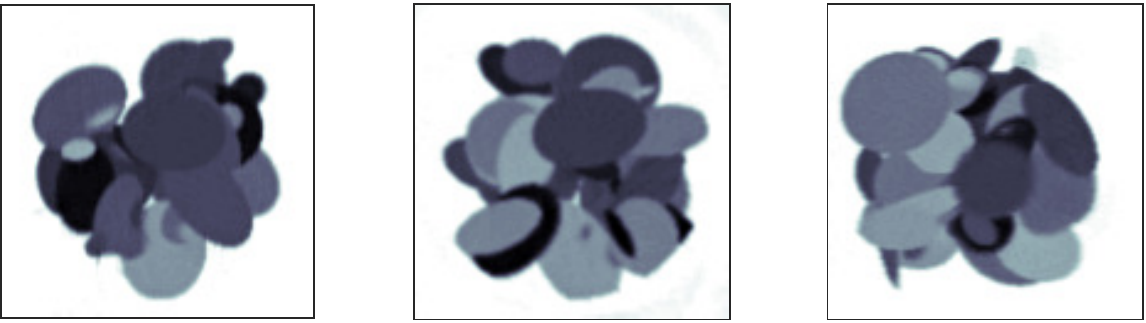}&\includegraphics[scale=.6]{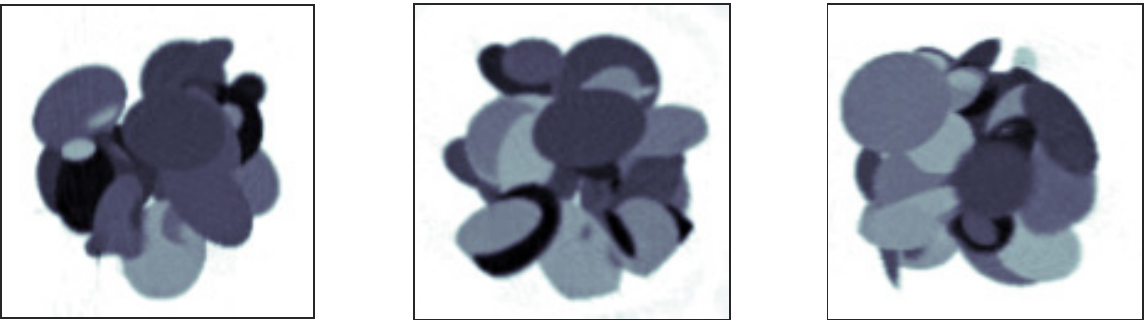}\\
\small{(initial misalignment increased by factor 2)}&\small{(initial misalignment increased by factor 4)}\\
&\\
\includegraphics[scale=.6]{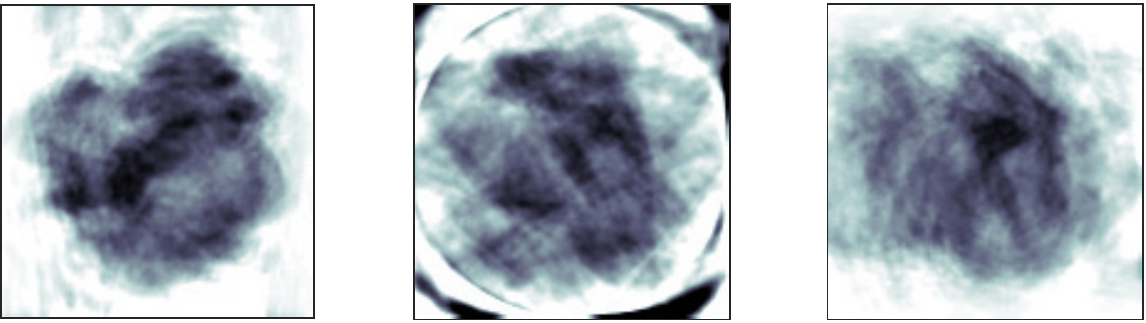}&\includegraphics[scale=.6]{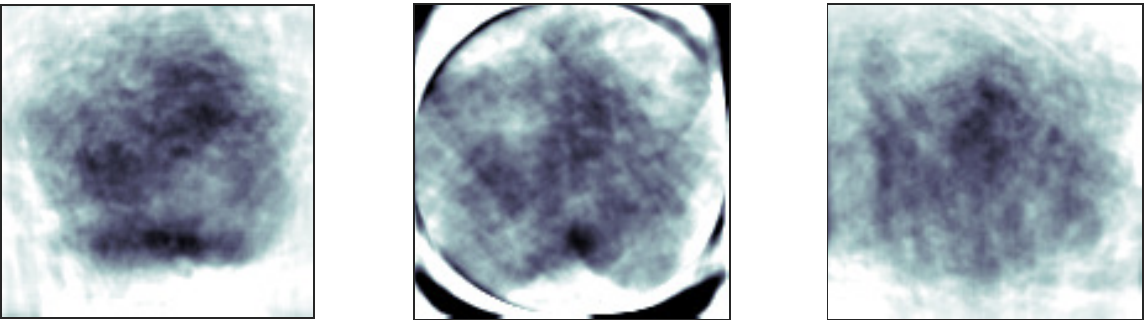}\\
\includegraphics[scale=.6]{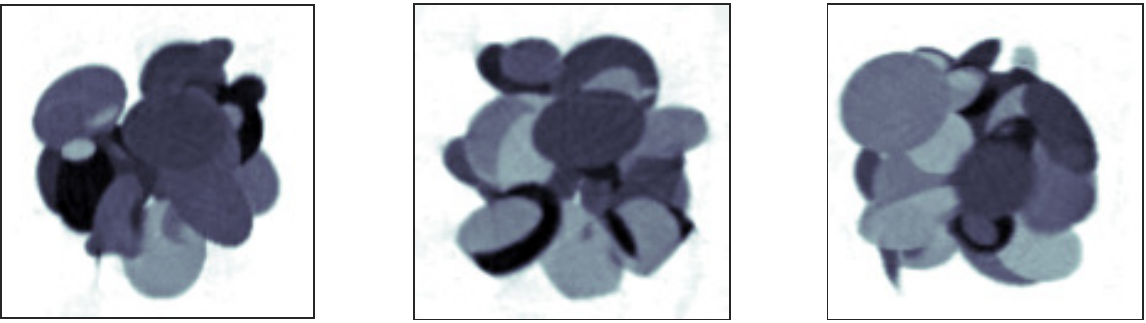}&\includegraphics[scale=.6]{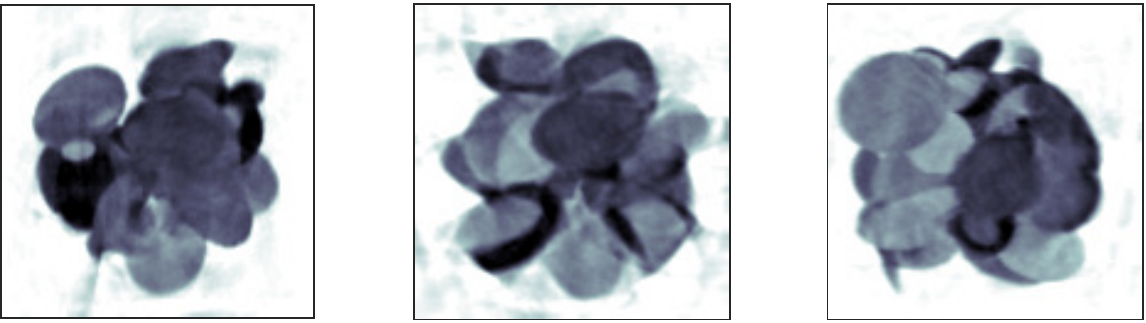}\\
\small{(initial misalignment increased by factor 8)}&\small{(initial misalignment increased by factor 16)}\\
\end{tabular}

\caption{Un-aligned (upper rows) and aligned reconstructions (lower rows) for increasing magnitude of the misalignment.}
\label{fig:magnitude1}
\end{figure}

\begin{figure}
\centering
\begin{tabular}{cc}
\includegraphics[scale=.5]{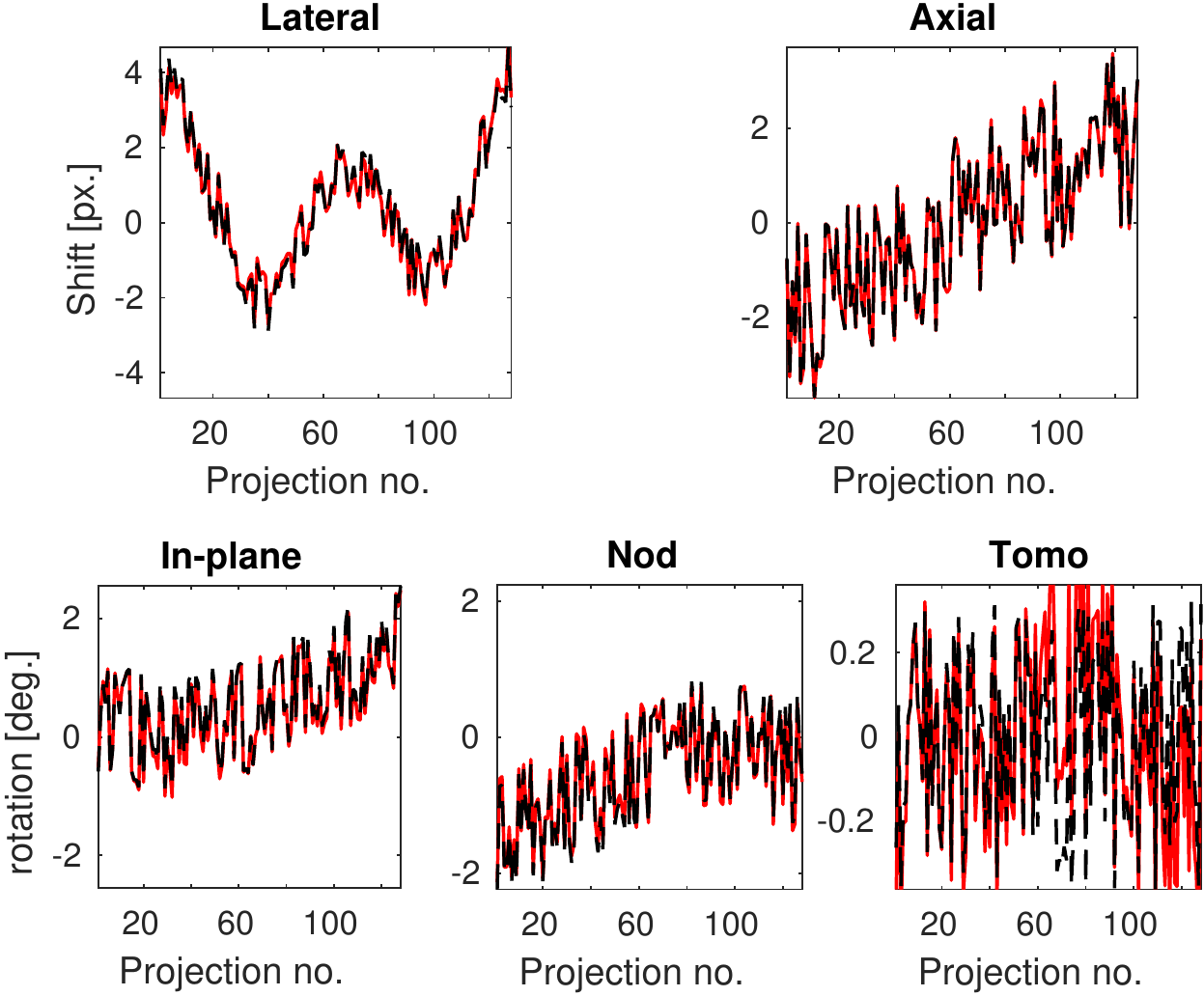}&
\includegraphics[scale=.5]{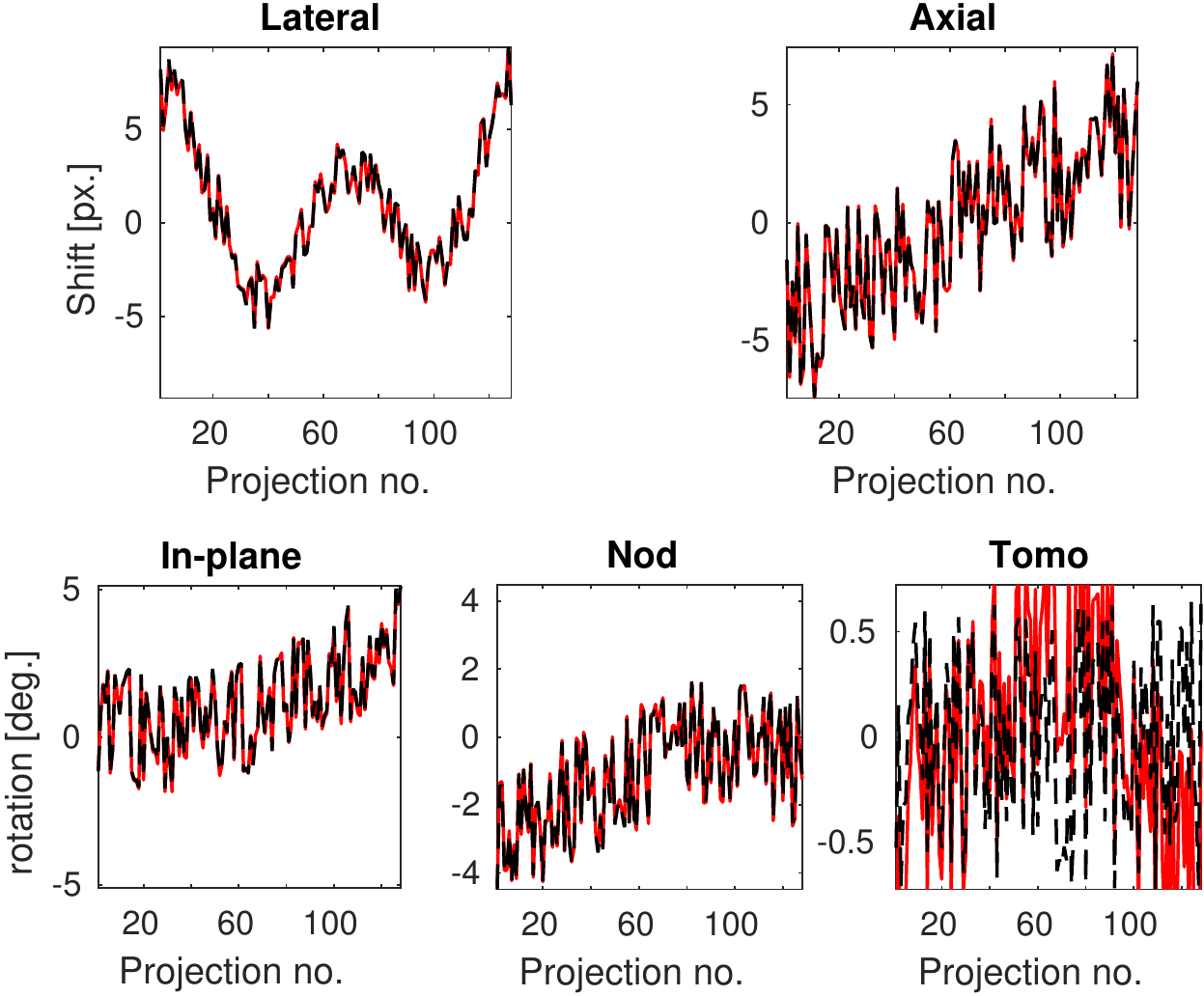}\\
\small{(factor 2)}&
\small{(factor 4)}\\
\includegraphics[scale=.5]{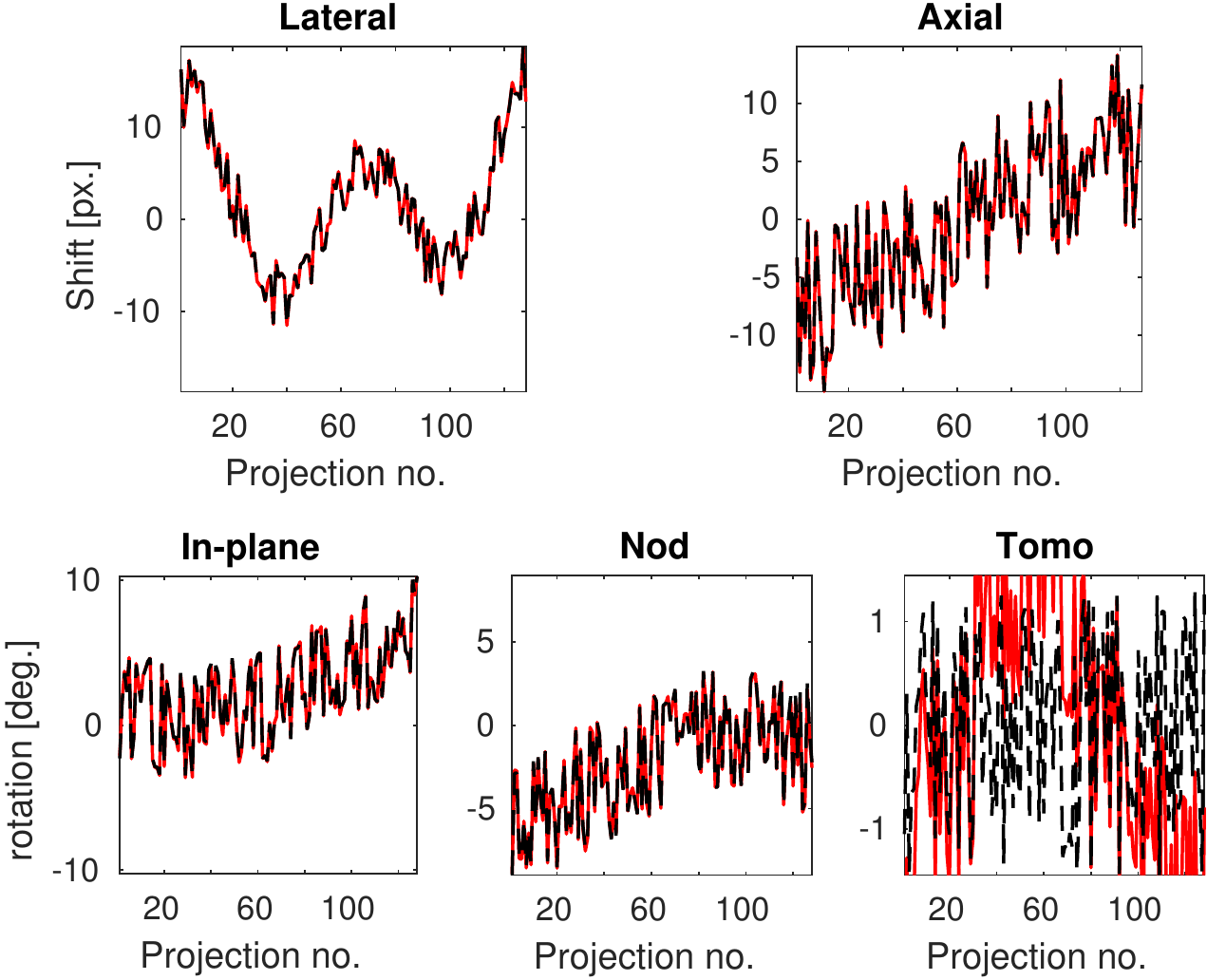}&
\includegraphics[scale=.5]{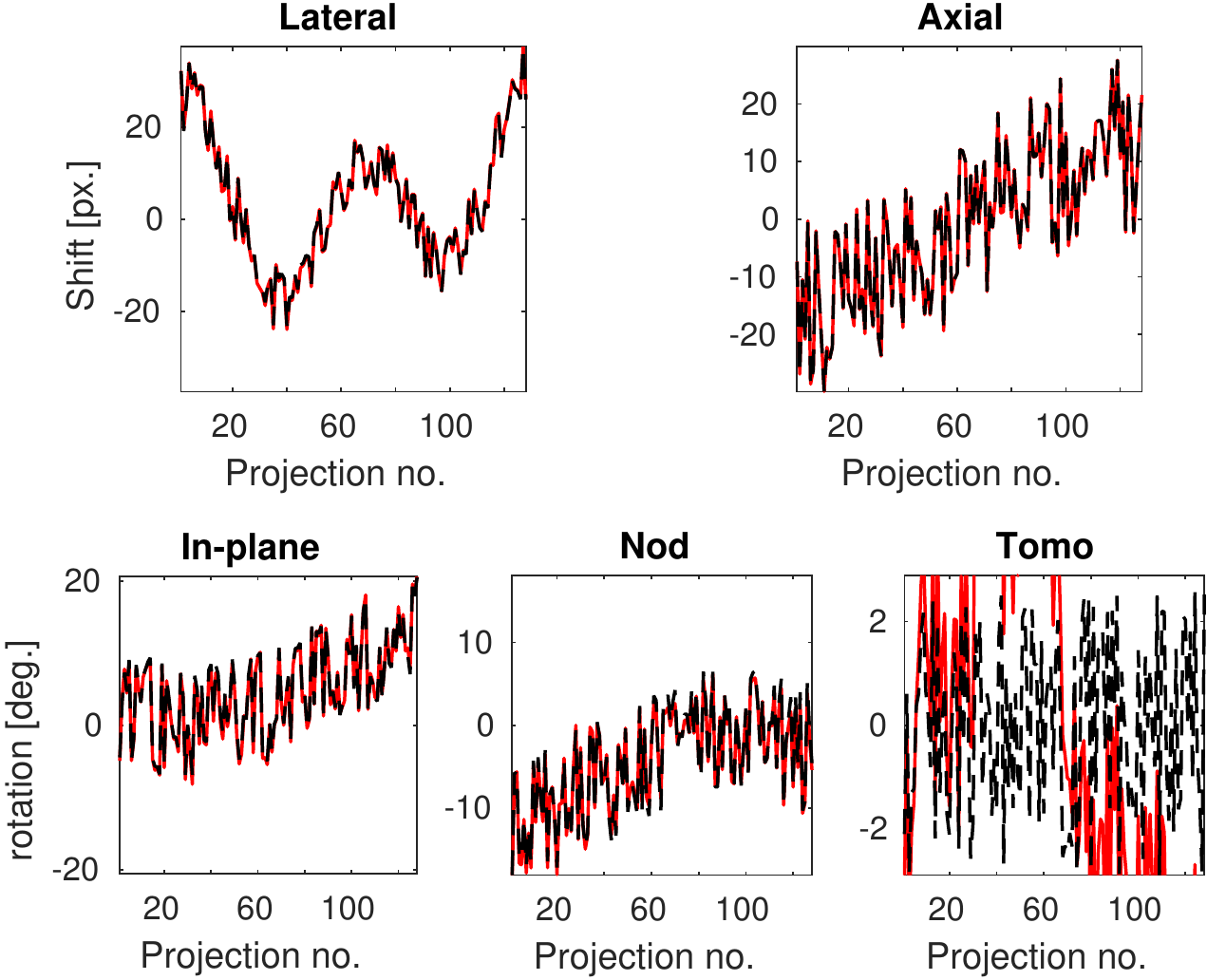}\\
\small{(factor 8)}&
\small{(factor 16)}\\
\end{tabular}
\caption{Retreived alignment parameters for increasing magnitude of the misalignment.}
\label{fig:magnitude2}
\end{figure}

\subsection{Alignment of truncated projection data} \label{SS:Results-RoI}

So far, we have only considered settings where the object is fully captured in all of the tomographic projections. In many applications, however, only a limited \emph{region-of-interest} (RoI) of an extended sample is imaged due to setup limitations or practical considerations. This setting can be modelled by composing the projection matrix $W(\mbf a)$ with an additional truncation matrix $T_{\textup{roi}} \in \mR^{m_{\textup{roi}} \times m}$ that restricts the (hypothetical) full-field tomographic data to the information captured by the $m_{\textup{roi}}$ detector pixels:
\begin{equation}
 W_{\textup{roi}}(\mbf a) := T _{\textup{roi}} W(\mbf a).
\end{equation}
The limited field-of-view allows for the integrated object mass in the detection domain to change while the sample is rotated. Most importantly, this effect rules out the theoretical exactness of center-of-mass-based alignment methods, although approximate schemes, where smooth window functions dampen the in- and outflow of object mass, have been proposed \cite{sanders_physically_2015}. On the contrary, our alignment algorithms remain applicable in the RoI-setting. We simply have to exchange the projector $ W(\mbf a)$ by $W_{\textup{roi}}(\mbf a)$ and otherwise follow exactly the same theoretical and numerical approach as in the full-field case. Note that the gradient penalty in the Tikhonov-regularized reconstruction (see \ref{SS:tomoReconMethod}) results in a smooth extrapolation of the recovered object outside the region-of-interest so that padding of the truncated data (see e.g. \cite{KyrieleisEtAl2011_RoIPractLimits}) is not necessary.

As a test case, we consider a $256^3$-sized object composed of randomly generated ellipsoids in a cylindrical support domain and simulate 128 projections in between $0^{\circ}$ and $160^\circ$, truncated to the central $128^2$ pixels. The tomographic data is simulated under both systematic and random lateral- and axial shifts of the object, compare figure~\ref{fig:alignment}. Rotational misalignment is omitted in this test case. The exact phantom (central slice), the simulated RoI-tomographic data (central sinogram) and the misalignment shifts (dotted lines) are shown in figures \ref{fig:RoI}(a), (b) and (c), respectively. We apply Algorithm~\ref{algorithm:smooth} to the data, where the iterations are automatically stopped after 39 alternating reconstruction- and alignment steps when the computed correction in the alignment parameters $\mbf a ^{(k)} - \mbf a ^{(k-1)}$ drops below $0.05$ pixels in maximum norm.

\begin{figure}
 \centering
 \includegraphics[width=\textwidth]{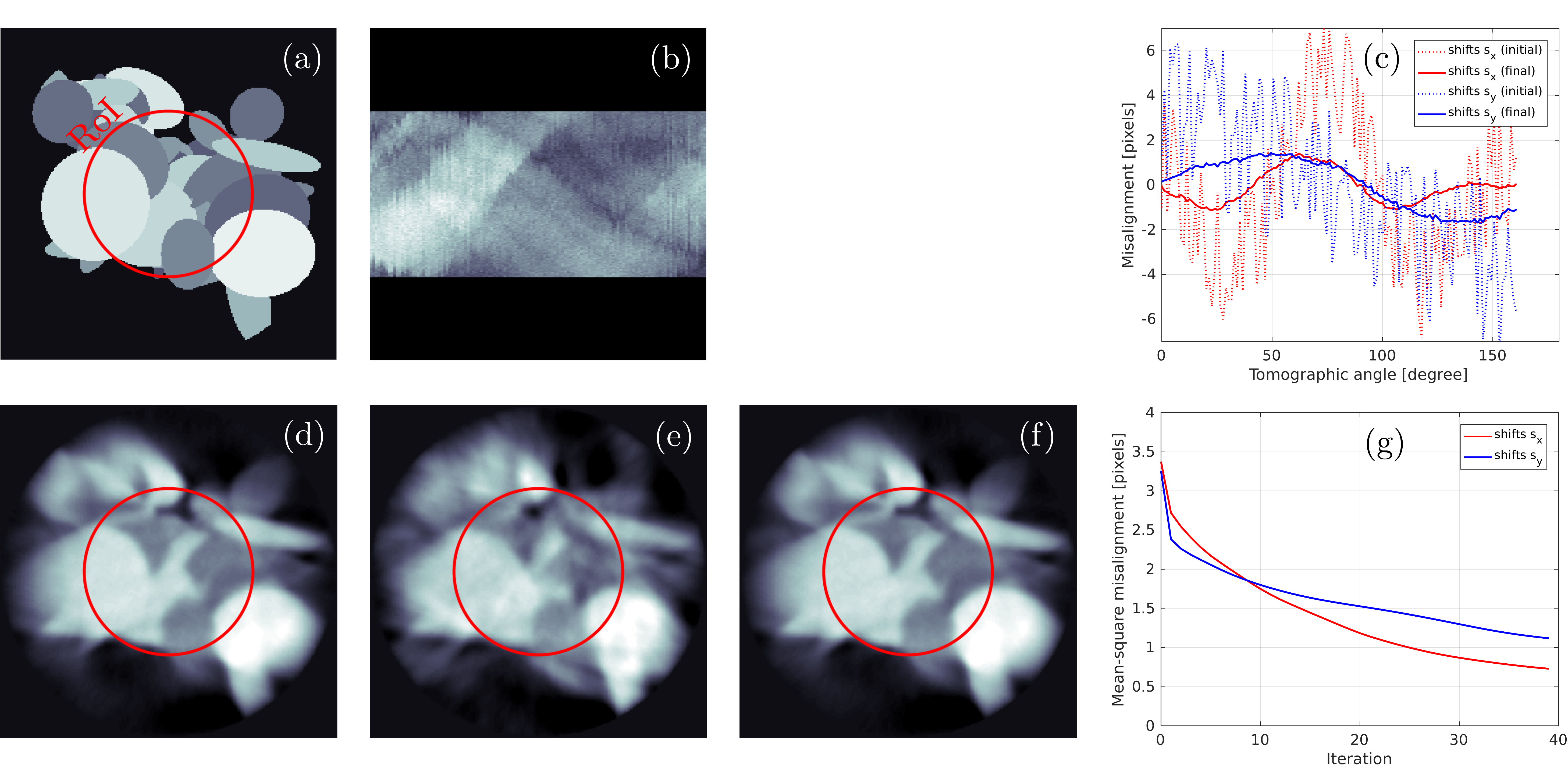}
 \caption{Alignment of truncated projection data by Algorithm~\ref{algorithm:smooth}: \newline 
 (a) Central slice (perpendicular to the tomographic axis) of the exact phantom used in the simulation. \newline
 (b) Central sinogram of the simulated truncated tomographic projections. \newline
 (c) Misalignment shifts of the object perpendicular (blue) and along (red) the tomographic axis. Dotted lines: initial misalignment. Solid: final residual misalignment after application of the algorithm. \newline
 (d), (e), (f) Same slice as in (a) of reconstructed objects with exact, initial and final alignment parameters, respectively (all other reconstruction parameters being equal). The red circles mark the region-of-interest that is fully contained in the truncated projections (up to in- and out-movement due to misalignment). \newline
 (g) Convergence plot: decrease of the root-mean-square difference between estimated and true alignment shifts.}
 \label{fig:RoI}
\end{figure}

The central slice of the final object reconstruction is shown in figure \ref{fig:RoI}(f), the difference between the reconstructed- and exact alignment parameters is plotted in (c) (solid lines). While rapid variations of shifts $\{(s_{i,x}, s_{i,y})\}$ with the tomographic angles $\theta_{i,xz}$ are almost completely eliminated by the alignment method, some slowly varying misalignment components remain. This is too not surprising since region-of-interest tomography is non-unique even without an additional alignment problem to be solved \cite{natterer_mathematics_2001}. Notably, however, figure \ref{fig:RoI}(c) reveals that also the bulk components of the simulated object motion are significantly damped compared to the initial misalignment. Likewise, the motion artifacts in the recovered object are almost invisible in the final reconstruction whereas quite pronounced initially, compare figures \ref{fig:RoI}(e) and (f). Visually, the achieved reconstruction using the fitted alignment parameters is indeed of comparable quality as a reconstruction using the \emph{exact} alignment, which is shown in figure \ref{fig:RoI}(d) for comparison. The convergence plot in figure \ref{fig:RoI}(g) reveals the fit of the alignment parameters to improve only very slowly, which indicates that the alignment problem is highly ill-conditioned in this setting. Indeed, the curves suggest that further improvements might be achievable by performing significantly more iterations.

\subsection{Application to an electron tomography data set} \label{SS:ETomo}

For evaluation of our alignment scheme in a real-world context, we apply it to an electron tomography data set (HAADF-STEM). The sample is given by an assembly of gold nanoparticles of $20$ nanometers mean diameter, embedded in a polymeric matrix. See \cite{VanAarle2015_ASTRA_ETomoSpheres} and references therein for experimental details and first tomographic reconstructions. Although actually measured as a dual-axis tilt series, i.e.\ for two perpendicular tomographic axes as sketched in figure \ref{fig:acquisition}(c), we consider only a single-axis sub-data set for simplicity. After background substraction and cropping to the relevant part of the field-of-view, the tomographic data consists of 75 projections of size $512 \times 512$, uniformly covering an angular range of $150^\circ$. As our alternating minimization alignment method is not global, we prealign the data by computing the center-of-mass (COM) of every projection and shift these to center the COM. The central sinogram of the prealigned tomographic is shown in figure \ref{fig:ETomo}(d).

In order to limit the computational effort, we bin the data to size $256 \times 256 \times 75$ and reconstruct the object in a cylindrical support domain enclosed by a $256^3$ voxel cube. For each projection, lateral and axial shifts $s_{i,x}, s_{i,y}$ as well as in-plane- and pitch-rotation angles $\theta_{i,xy}, \theta_{i, yz}$ are fitted (compare figure~\ref{fig:alignment}), i.e.\ a fully 3D alignment is performed except for perturbations in the tomographic angles. The regularization parameter $\alpha$ in the Tikhonov cost functional is chosen according to an estimated misalignment of 10 pixels, see \ref{SS:tomoReconMethod}.

Slices of an object recovered from the prealigned data (corresponding to the initial iterate of the alignment algorithm) are plotted in figures \ref{fig:ETomo}(a)-(c). Figures \ref{fig:ETomo}(e)-(g) show the same slices for a reconstruction based on the final alignment parameters obtained after 47 alternating reconstruction- and alignment steps of Algorithm \ref{algorithm:smooth}. Note that the recovered objects are not the (smooth) intermediate reconstructions computed by the alignment method. Rather, they are calculated using a regularization parameter $\alpha$ that is reduced by a factor of 100 in order to facilitate the identification of misalignment artifacts and the comparison to the results in \cite{VanAarle2015_ASTRA_ETomoSpheres}. The fitted alignment parameters are plotted in  figure \ref{fig:ETomo}(h).

\begin{figure}
 \centering
 \includegraphics[width=\textwidth]{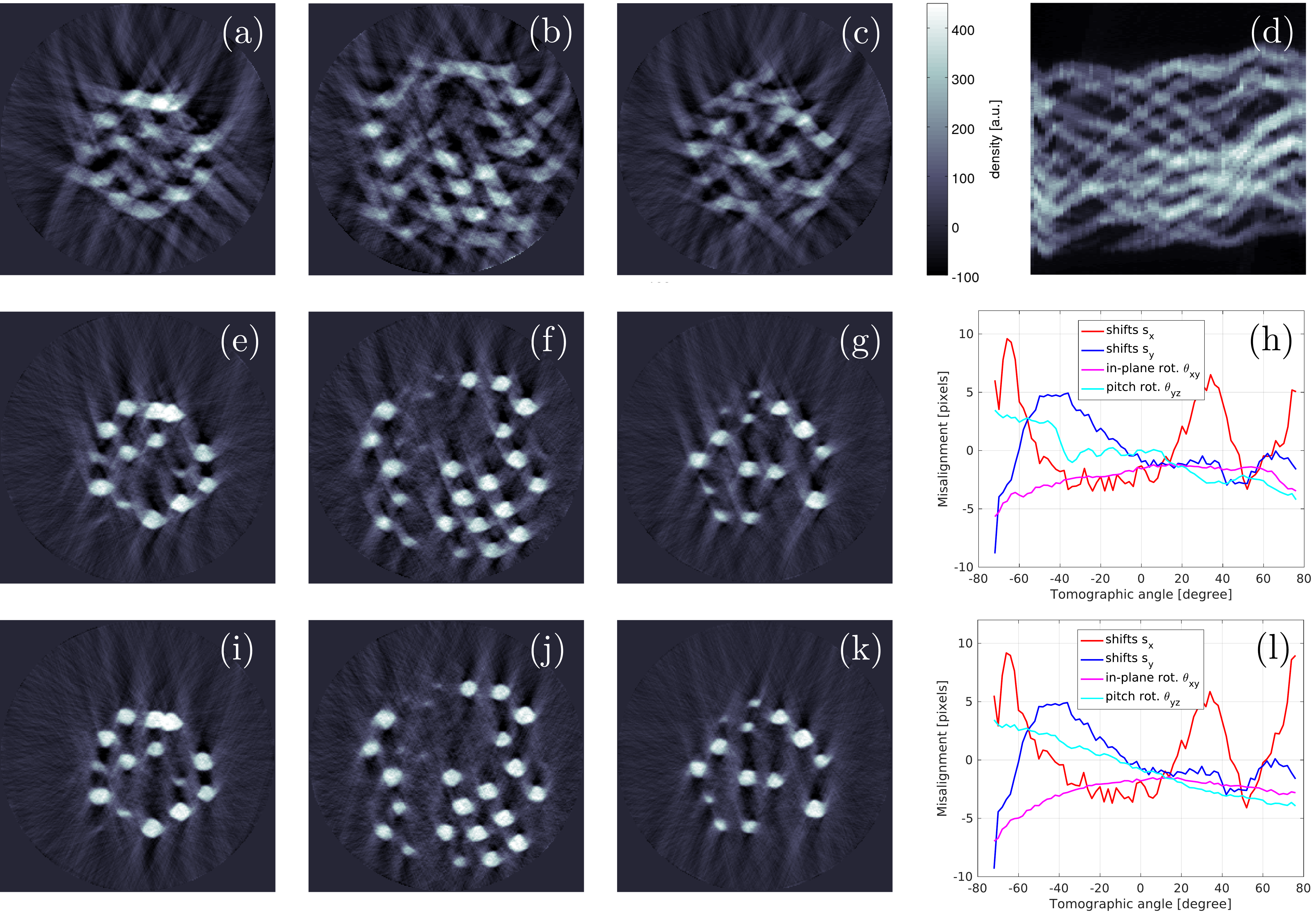}
 \caption{Alignment of an electron tomography data set (HAADF-STEM) \cite{VanAarle2015_ASTRA_ETomoSpheres}: \newline (a)-(c) Slices of a $256^3$-sized object reconstruction after center-of-mass prealignment  (perpendicular to the tomographic axis at heights $y = 64$, $128$ and $192$ voxels). \newline
 (d) Central sinogram of the prealigned tomographic data. \newline
 (e)-(g) Same reconstruction as in (a)-(c) but computed using the final alignment parameters $\mathbf{a}^{(k_{\textup{stop}})}$ obtained by applying Algorithm \ref{algorithm:smooth} to the prealigned data. \newline
 (h) Fitted alignment parameters $\mathbf{a}^{(k_{\textup{stop}})}$. Rotation angles are scaled according to the corresponding mean motions of the voxels in the cylindrical supporting domain. \newline
  (i)-(l) Same plots as (e)-(h) for results from a second run of Algorithm \ref{algorithm:smooth} using a Kaczmarz-type tomographic reconstruction method with non-negativity constraint. \newline All plotted objects have been reconstructed with the same Tikhonov regularization.}
 \label{fig:ETomo}
\end{figure}

Figure \ref{fig:ETomo} reveals that the stripe-shaped misalignment artifacts are considerably reduced by the alternating-minimization alignment compared to the initial prealignment: figures \ref{fig:ETomo}(e)-(g) show well-defined compact particles as anticipated for the considered sample. However, in addition to shadow- and stripe artifacts associated with the incomplete tomographic angular range (\emph{missing wedge} of $30^\circ$) and the coarse angular sampling, respectively, the plots show some residual crescent-shaped appendices of the particles along specific directions. This indicates a residual misalignment in the determined position of the tomographic axis.

\clubpenalty = 10000

The systematic occurence of negative values in the recovered objects suggests that the alignment might be improved by imposing a non-negativity constraint in the tomographic reconstruction. As this is not possible for the conjugate-gradients-based Tikhonov regularization, we run Algorithm \ref{algorithm:smooth} a second time using the alternative Kaczmarz-type tomographic reconstruction method described in \ref{SS:tomoReconMethod}. The regularization parameter $\alpha$ and all other settings are chosen exactly as before.

Notably, convergence occurs much faster in this case, the stopping criterion of less than $0.05$ pixel increment in the alignment parameters being met already after 15 iterations. The final fit of alignment parameters, plotted in figure \ref{fig:ETomo}(l), deviates from the previous results (figure \ref{fig:ETomo}(h)) in the order of $\pm 1$ pixels. In order to compare the quality of the two fits, we compute a final (Tikhonov-)reconstruction  with the same regularization as for the results shown in figures \ref{fig:ETomo}(e)-(g), yet using the alignment parameters obtained from the Kaczmacz-type approach. The recovered object slices are shown in figures \ref{fig:ETomo}(i)-(k). While the artifacts associated with a missing-wedge- and the angular sampling remain, the directed crescent-structures are almost completely eliminated. This indicates a significantly improved alignment of the tomographic axis.


\section{Conclusions and discussion}
\label{conclusions}

In many applications of tomographic imaging, the experimental setup deviates geometrically from the idealized mathematical model. Such a misalignment can cause severe distortions in the reconstructed image. Therefore, an alignment procedure is needed to match the mathematical model to the true tomographic geometry. In some cases, this alignment can be done prior to reconstruction by simple transformations of the projection images. For general three-dimensional misaligment of the setup, reconstruction and alignment have to be done jointly.
In this paper, we discuss the alignment problem from an algebraic point-of-view and propose an algorithmic framework for solving joint reconstruction and alignment problems. By posing the joint problem as a separable least-squares problem, we can exploit the special structure of such problems: they are quadratic in the image but generally non-convex in the alignment parameters. Since these optimization problems allow for a unique reconstructed image for a given set alignment parameters, we can eliminate the image by solving a (regularized) least-squares problem. The remaining problem is shown to be smooth in the alignment parameters (given a sufficiently differentiable interpolation scheme), even when using non-smooth regularization for the image reconstruction.
The alignment problem may include additional regularization terms on the alignment parameters and can be solved using a gradient-based method. We present a general mathematical framework for designing and analyzing such gradient-based methods and discuss their convergence properties. Under the additional assumption of (local) convexity, we show that a linear convergence rate can be achieved even when performing inexact reconstructions.

We present numerical experiments on both phantom and real data to illustrate the proposed approach. For simulated full-field tomographic data, we find that small to moderate perturbations in almost all alignment parameters can be stably retrieved by Algorithm~\ref{algorithm:smooth}, fitting the alignment parameters by simple gradient-descent steps. The only exception is given by inaccuracies in the tomographic tilt angle, for which we observe a slight instability of the reconstruction to low-frequency errors. 
By its generic nature, our method can also be applied in the case of truncated projection data as measured in region-of-interest tomography.
Indeed, our simulations show that it may correctly detect misalignment in the data even in this setting, for which already tomographic reconstruction without an alignment problem is non-unique. However, convergence turns out to be rather slow in this case. Aligning an experimental electron tomography data set by our algorithm, we demonstrate its practical applicability in more challenging, real situations, in particular involving a limited range of tomographic angles. We furthermore find that the alignment can be improved by
additionally imposing a non-negativity constraint on the reconstructed images. This reveals that the efficiency of the presented alignment framework greatly benefits from the exploitation of available a priori knowledge on the imaged objects. Optimizing the incorporation of constraints is thus an important goal for further development.

It should be emphasized that, rather than developing a highly-taylored alignment method for specific types of datasets, we set out to describe a general algorithmic framework in which joint reconstruction-alignment methods can be formulated and analyzed. Our implementation contains certain features to make it generic and robust. This includes a linesearch procedure, estimation of the regularization parameter, and the ability to work with a wide range of acquisition setups. Alignment of challenging large-scale datasets, however, requires further tuning and a more efficient implementation, which is subject to future research.

\section*{Acknowledgements}
This work was funded in part by Netherlands Organisation for Scientific Research (NWO) through projects 613.009.032 and 639.073.506 and by the Deutsche Forschungsgemeinschaft (DFG) through project C02 of SFB 755 - Nanoscale Photonic Imaging.

\clearpage
\bibliographystyle{abbrvIP}
\bibliography{mybib}

\clearpage
\appendix

\section{Implementation} \label{S:implementation}

\subsection{Discretization of the X-ray transform} \label{S:DiscXray}

We express the X-ray transform of a three-dimensional object $u$ as
\[
p(\mathbf{s},\boldsymbol{\eta}) = \int_{\mathbb{R}}\!\mathrm{d}t\, u(\mathbf{s} + t\boldsymbol{\eta}),
\]
where $\mbf{s}\in\mathbb{R}^3$ and $\boldsymbol{\eta}\in\mathbb{S}^{2}$, $\mathbb{S}^{2} = \{\mbf{x}\in\mathbb{R}^3\,\, |\,\, \|\mbf{x}\|_2 = 1\}$ denoting unit sphere, determine the origin and direction of the X-ray. The projection data consists of a collection of such projections $p_i = p(\mathbf{s}_i,\boldsymbol{\eta}_i)$, for $i = 1, \ldots, m$. Sampling the object $u$ on a grid of $n$ voxels with values $u_i$ for $i = 1, \ldots, n$ we discretize the X-ray transform as
\[
\mathbf{p} = W(\mathbf{a}) \mbf u.
\]
The exact form of the system matrix $W(\mbf a ) \in \mR^{m \times n}$ depends on the applied interpolation method, see \ref{SS:interpolation}.
The tomographic data can be grouped into the different projections $\mbf p = (\mbf p_1, \mbf p_2, \ldots, \mbf p_{N_{\textup{proj}}})^{\textup T}$, each acquired at a specific, yet possibly unknown set of alignment parameters $\mathbf{a}_i$ - a vector parametrizing the six degrees of freedom of a rigid object motion.
The projection matrix then has a block-structure
\[
W(\mathbf{a}) = \left(\begin{matrix}
W_{1}(\mathbf{a}_1) \\
W_{2}(\mathbf{a}_2) \\
\vdots\\
W_{N_{\textup{proj}}}(\mathbf{a}_{N_{\textup{proj}}})
\end{matrix}\right).
\]
Importantly, this also induces a block-structure of the Jacobian to be evaluated in Algorithm~\ref{algorithm:smooth}:
\[
G(\mathbf{a},\mathbf{u}) =  \frac{\partial W(\mbf a) \mbf u }{\partial  \mbf a} = \left(\begin{matrix}
\frac{\partial W_{1}}{\partial \mathbf{a}_1}(\mathbf{a}_1) \mbf u  & & &0 \\
& \frac{\partial W_{2}}{\partial \mathbf{a}_2}(\mathbf{a}_2) \mbf u  & & \\
&& \ddots \\
0 &&& \frac{\partial W_{N_{\textup{proj}}}}{\partial \mathbf{a}_{N_{\textup{proj}}}}(\mathbf{a}_{N_{\textup{proj}}}) \mbf u
\end{matrix}\right).
\]

\subsection{Parametrization of the alignment transforms} \label{SS:representAlign}

We represent the alignment parameters for one projection as $\mathbf{a}_i = ( \mathbf{s}_i, \boldsymbol{\theta}_i ) $, where the shift-vector $\mathbf{s}_i = (s_{i,x}, s_{i,y}, s_{i,z}) \in \mR^3$ contains scalar translations along the different dimensions and $\boldsymbol{\theta}_i = (\theta_{i,xy}, \theta_{i,yz}, \theta_{i,zx}) \in [-\pi; \pi)$ contains the rotation angles to be applied in different coordinate planes, see figure~\ref{fig:alignment}.

The projection matrix $W_{i}$, that models the parallel-beam tomographic projection of the object under the rigid geometrical transform $\mathbf{a}_i$, can be written as
\begin{equation}
 W_{i}(\mathbf{a}_i) = S_z R(\mathbf a_i), \label{eq:ProjRotateAndSum}
\end{equation}
where $R(\mathbf a_i)$ is a matrix implementing rotations and shifts and $S_z$ sums the voxel values of three-dimensional arrays along the $z$-dimension. Note that only $S_z$ and not $R$ is specific to the considered parallel-beam geometry. Hence, it would be easy to implement alignment problems also for cone-beam setups, for example, by simply exchanging $S_z$ with the cone-beam projector for a fixed source position.

As shifts and rotations do not commute, we have to follow some convention on the order in which the different transforms are applied. We choose the following:
\begin{enumerate}
 \item Rotation in the $z$-$x$-plane (\emph{tomographic rotation})
 \item Rotation in the $y$-$z$-plane (\emph{pitch rotation})
 \item Shifts in along the $x, y, z$-dimensions
 \item Rotation in the $x$-$y$-plane (\emph{in-plane rotation})
\end{enumerate}
As the naming suggests it, we take the $y$-axis as the tomographic axis and apply the desired rotation about this axis first before modeling additional (undesired) motions. The benefit of this ordering is that \emph{trivial} components of the alignment transforms $\mathbf{a}$ can be easily identified and removed. These correspond to constant transforms of the entire object over the whole tomographic acquisition (see e.g.\ \cite{basu_uniqueness_2000}):
\begin{itemize}
 \item $\{\theta_{i,zx}\}$: The mean corresponds to a constant rotation about the tomographic axis
 \item $\{\theta_{i,xy}\}, \{\theta_{i,yz}\}$: Joint sinusoidal variations $(\theta_{i,xy}, \theta_{i,yz}) = a ( \sin( \theta_{i,zx} + b ), \cos( \theta_{i,zx} + b ) )$ for $a > 0$, $b \in \mR$ correspond to a uniform tilt of the tomographic axis
 \item $\{s_{i,x} \}$: Sinusoidal components $s_{i,x} = c \sin( \theta_{i,zx} + d)$ correspond to a constant translation perpendicular to the tomographic axis
 \item $\{s_{i,y} \}$: The mean corresponds to a constant translation along the tomographic axis
 \item $\{s_{i,z} \}$: Shifts along the projection dimension; redundant in parallel-beam geometry
\end{itemize}

In order to ensure unambiguous results, we remove the above components from any simulated and reconstructed alignment transforms $\mathbf{a}$ by substracting the orthogonal projections onto the redundant modes.

\subsection{Computation of rigid transforms and interpolation} \label{SS:interpolation}

We consider discrete objects $\mbf{u} \in \mR^{N_x \times N_y \times N_z}$, that sample the continuous density $u:\mR^3 \to \mR$ on equidistant grid points $\mathbf x_{\textup{s}} = (x_{\textup{s}}, y_{\textup{s}}, z_{\textup{s}} ) \in \{1,\ldots,N_x\} \times \{1,\ldots,N_y\} \times \{1,\ldots,N_z\} =: D_{\mbf{u}}$. The action of the geometrical transform on $\mbf{u}$ can be written as
\begin{equation}
  R(\mathbf a_i)(u)_{\mathbf x_{\textup{s}}  } = \tilde u(A_R(\mathbf a_i)^{-1}( \mathbf x_{\textup{s}}  ) ) \label{eq:TransformOnCoords}
\end{equation}
for all $\mathbf x_{\textup{s}}  \in D_{\mbf{u}}$. Here, $A_R(\mathbf a_i)^{-1} : \mR^3 \to \mR^3$ denotes the inverse of the affine-linear map that implements the rotation and shift parametrized by $\boldsymbol \theta_i, \mathbf s_i$ and $\tilde u$ is some interpolating function for $\mbf{u}$ that allows its approximation
on arbitrary points in $\mR^3$.

We use \emph{trilinear} and \emph{tricubic} interpolators of the form
\begin{align}
 \tilde u(\mathbf x ) &= \sum_{\mathbf x_{\textup{s}}\in D_{\mbf{u}}} u_{\mathbf x_{\textup{s}}  } k_{\textup{3d}} (\mathbf x - \mathbf x_{\textup{s}}), \quad k_{\textup{3d}}(\mathbf x ) = k(x) k(y) k(z), \quad k \in \{k_{\textup{lin}}, k_{\textup{cubic}}\}  \label{eq:NLinCubicInterp} \\
 k_{\textup{lin}}(x) &:= \begin{cases}
                         1-|x| & |x| \in [0;1] \\
                         0   &\text{else}
                        \end{cases}, \quad
 k_{\textup{cubic}}(x) := \begin{cases}
                         \frac 3 2 |x|^3 - \frac 5 2 |x|^2 + 1 & |x| \in [0;1] \\
                        - \frac 1 2 |x|^3 + \frac 5 2 |x|^2 - 4 |x| + 2 & |x| \in [1;2] \\
                         0   &\text{else}
                        \end{cases} \nonumber
\end{align}
for all $\mathbf x \in \mR^3$. The interpolation kernels $k \in \{k_{\textup{lin}}, k_{\textup{cubic}}\}$ have the properties $k(0) = 1$ and $k(x) = 0$ for all $x \in \mZ \setminus \{ 0\}$. Hence, $\tilde u$ indeed interpolates $\mbf{u}$, i.e.\ $\tilde u ( \mathbf x_{\textup s} ) =  u_{\mathbf x_{\textup{s}}  }$ holds on all sampling points $\mathbf x_{\textup s} \in D_{\mbf{u}}$. Furthermore, $k_{\textup{lin}}$ is Lipschitz-continuous and $k_{\textup{cubic}}$ is continuously differentiable with Lipschitz-continuous derivative. Accordingly, the same smoothness properties hold for interpolator defined in \eqref{eq:NLinCubicInterp} and its gradient may be evaluated via
\begin{equation}
 \nabla \tilde u(\mathbf x ) = \sum_{\mathbf x_{\textup{s}}\in D_{\mbf{u}}} u_{\mathbf x_{\textup{s}}  } \nabla k_{\textup{3d}} (\mathbf x - \mathbf x_{\textup{s}})
\end{equation}

Differentiability of the interpolation method for $\mbf{u}$ allows to compute the derivative of $R(\mathbf a_i)(\mbf{u})_{\mathbf x_{\textup{s}}  }$ with respect to the alignment parameters $\mathbf a_i$:
\begin{equation}
 \frac{\partial R(\mathbf a_i)(\mbf{u})_{\mathbf x_{\textup{s}}  }}{\partial  \mathbf a_i } = \frac{\partial A_R(\mathbf a_i)^{-1}( \mathbf x_{\textup{s}}  )}{\partial  \mathbf a_i } \cdot \nabla \tilde u(A_R(\mathbf a_i)^{-1}( \mathbf x_{\textup{s}}  ) ). \label{eq:TransformOnCoordsDeriv}
\end{equation}
Evaluating these derivatives for all sampling points $\mathbf x_{\textup{s}}\in D_{\mbf{u}}$ then enables the computation of the sought derivatives $\frac{\partial W_{i}}{\partial \mathbf{a}_i}$ via \eqref{eq:ProjRotateAndSum}.

We emphasize that the tricubic interpolator has just the right regularity to match the lesser differentiability conditions presented in section \ref{SS:lessDiff}.
According to  in \eqref{eq:NLinCubicInterp}, however, this interpolation method requires evaluations of the kernel $k_{\textup{3d}}$ for the 64 nearest- and next-to-nearest neighbors of the  evaluation point.

Faster evaluations of the rotation- and shift map $R(\mathbf a_i)(\mbf{u})$ can be achieved by decomposing it into three subsequent transforms along 2D-coordinate planes, i.e.\ we exploit that - up to interpolation errors - the following relation holds:
\begin{align}
 R(\mathbf a_i)(\mbf{u}) &= R(\mathbf{a}_{i, xy}) R(\mathbf{a}_{i, yz}) R(\mathbf{a}_{i, zx})(\mbf{u}) \label{eq:3Dto2DTransforms} \\
 \mathbf{a}_{i, xy} := &(s_{i,x}, s_{i,y}, 0, \theta_{i, xy}, 0, 0), \quad \mathbf{a}_{i, yz} = (0, 0, s_{i,z}, 0, \theta_{i, yz}, 0), \quad \mathbf{a}_{i, zx} = (0, 0, 0, 0, 0, \theta_{i, zx})  \nonumber
\end{align}
The transforms are chosen such that each of them requires interpolation only within a specific coordinate plane of the input 3D data. In the case of $R(\mathbf{a}_{i, xy})$ for instance, all interpolation points are of the form $\mathbf x = (x,y,z)$ with $z \in \{1, \ldots, N_z\}$. \eqref{eq:NLinCubicInterp} becomes
\begin{align}
 \tilde u(\mathbf x ) &= \sum_{\mathbf x_{\textup{s}}\in D_{\mbf{u}}} u_{\mathbf x_{\textup{s}}  } k (x - x_{\textup{s}}) k (y - y_{\textup{s}}) \underbrace{k (z - z_{\textup{s}})}_{= \delta_{z,z_{\textup{s}}}} = \sum_{x_{\textup{s}}, y_{\textup{s}} = 1}^{N_x, N_y} u_{ (x_{\textup{s}}, y_{\textup{s}}, z  ) } k (x - x_{\textup{s}}) k (y - y_{\textup{s}}) \label{eq:3Dto2DInterp}
\end{align}
i.e.\ the 3D-interpolation is reduced to 2D. Moreover, note that the interpolation weights $k (x - x_{\textup{s}}) k (y - y_{\textup{s}})$ are independent of $z$. This enables efficient numerical implementation of \eqref{eq:3Dto2DInterp} by assembling the weights into a sparse matrix  and then applying it to a suitably reshaped version of $\mbf u$ by a sparse-matrix-dense-matrix-product.

We refer to the implementation scheme given by \eqref{eq:3Dto2DTransforms} and \eqref{eq:3Dto2DInterp} as \emph{bilinear} (for $k = k_{\textup{lin}}$) or \emph{bicubic} (for $k = k_{\textup{cubic}}$) due to the two-dimensional nature of the involved interpolations. Note that these methods result in the same smoothness of $\mathbf{a}_i \mapsto W_{i}(\mathbf{a}_i)$ as trilinear and tricubic interpolation, respectively. However, interpolation errors are probably more pronounced for the former ones due to the concatenation of interpolators.

\subsection{Tomographic reconstruction method} \label{SS:tomoReconMethod}

In the numerical examples presented in section \ref{S:Results}, we make the simple choice of quadratic Tikhonov regularization with a gradient penalty term as the cost function in \eqref{eq:joint}:
\begin{equation}
  f(\mbf{a},\mbf{u}) = \frac 1 2 \norm{W(\mbf a )  \mbf{u} - \mbf p ^{\textup{obs}} }_2^2 + \frac \alpha 2 \norm{ \nabla_{\textup{dis}} \mbf{u} }_{2}^2 \label{eq:TikhonovFunctional}
\end{equation}
Here, 
$\nabla_{\textup{dis}}$ denotes the discrete gradient operator. With this choice, the first step in Algorithms \ref{algorithm:smooth} -- \ref{algorithm:alternating} amounts to solving the quadratic least-squares problem
\begin{equation}
  \mbf{u}^{(k+1)}  =   \argmin_{ \mbf u } \norm{W(\mbf a^{(k)} )  \mbf{u} - \mbf p ^{\textup{obs}} }_2^2 +  \alpha   \norm{ \nabla_{\textup{dis}} \mbf{u} }_{2}^2. \label{eq:TikhonovTomoRec}
\end{equation}
We approximate minimizers by applying the conjugate gradient method to the normal equation corresponding to \eqref{eq:TikhonovTomoRec}.

As has been extensively discussed in the literature on Tikhonov regularization (see e.g.\ \cite{EnglEtAl1996_RegInvProb} and references therein), the penalty term in \eqref{eq:TikhonovTomoRec} establishes robustness of $\mbf{u}^{(k+1)}$ against errors in the data $\mbf p ^{\textup{obs}}$ and thus in particular to inconsistencies due to misalignment. Accordingly, it prevents overfitting in the tomographic reconstruction step. Our motivation for penalizing the discrete gradient of $\mbf u$ instead of choosing the standard penalty term $\norm{\mbf u}_2 ^2$ is two-fold: for once, misalignment often manifests in sharp stripe artifacts in the reconstruction, which can be suppressed more efficiently by smoothing penalties. On the other hand, it can be seen from \eqref{eq:TransformOnCoordsDeriv} that the evaluation of derivatives $\frac{\partial W_{i}}{\partial \mathbf{a}_i}$ requires approximations of the gradient of $\mbf u$. Penalizing large values of $\nabla_{\textup{dis}} \mbf u$ in the tomographic reconstruction thus stabilizes subsequent alignment steps.

Minimizing the Tikhonov functional in \eqref{eq:TikhonovTomoRec} can be quite costly if the system is ill-conditioned. Moreover, the approach does not allow to impose a non-negativity constraint for the reconstructed object as is often reasonable to suppress artifacts. We therefore propose an alternative Kaczmarz-type scheme for the tomographic reconstruction in Algorithms \ref{algorithm:smooth} -- \ref{algorithm:alternating}:
\begin{align}
  \mbf{u}^{(k)}_{0} &:= \mbf{u}^{(k)}, \quad
  \mbf{u}^{(k)}_{j+1} =   \argmin_{ \mbf u } \norm{W(\mbf a^{(k)}_{m_j} )  \mbf{u} - \mbf p ^{\textup{obs}}_{m_j} }_2^2  +  \frac \alpha 2 \norm{ \nabla_{\textup{dis}} (\mbf{u} - \mbf{u}^{(k)}_{j} ) }_{2}^2 \nonumber \\
  \mbf{u}^{(k+1)} &:= \mbf{u}^{(k)}_{N} \label{eq:KaczmarzTomoRec}
\end{align}
We always compute one symmetric Kaczmarz cycle, i.e.\ $N =2 N_{\textup{proj}}$ is twice the number of tomographic projections and $m_1 = m_N, m_2 = m_{N-1}, \ldots m_{N/2} = m_{N/2+1}$ holds true. The processing order of the projections $\mbf p ^{\textup{obs}}_{m_1}, \mbf p ^{\textup{obs}}_{m_2}, \ldots $ is chosen according to the multi-level-scheme proposed in \cite{0031-9155-39-11-013}. It is shown in \cite{KindermannLeitao2014_genARTConvergence} that the final Kaczmarz-iterate $\mbf{u}^{(k)}_{N}$ minimizes of a preconditioned form of the Tikhonov functional in \eqref{eq:TikhonovTomoRec}. Accordingly, the proposed scheme matches our optimization setting \eqref{eq:joint}. Non-negativity of $\mbf{u}^{(k+1)}$  can be imposed by setting negative values to zero after each Kaczmarz-iteration in \eqref{eq:KaczmarzTomoRec}.

For the choice of the regularization parameter $\alpha$ in \eqref{eq:TikhonovTomoRec} and \eqref{eq:KaczmarzTomoRec}, we propose a heuristic rule: Let $\delta a$ denote an estimate for the perturbations of the alignment parameters to be corrected, i.e.\ $\delta a  \approx \norm{\mathbf{a}^{\text{(exact)}} - \mathbf{a}^{(0)}}_\infty$. Then tomographic reconstruction from the misaligned data can be expected to result in artifacts roughly on the lenghtscale $\delta a$ and below. In order to suppress these, $\alpha$ is chosen such that the data fidelity- and regularization term in  \eqref{eq:TikhonovTomoRec} are balanced exactly at this scale, by ensuring $ \alpha \approx \norm{W(\mbf a )  \mbf{u}_{\delta a}   }_2^2 / \norm{ \nabla_{\textup{dis}} \mbf{u}_{\delta a} }_{2}^2$ for Fourier modes $\mbf u_{\delta a} = \exp(\textup i  \boldsymbol \xi \cdot \mathbf x )$ of frequency $|\boldsymbol \xi| = \pi / \delta a$. Using the Fourier-slice-theorem, a simple formula for the choice of $\alpha$ can be obtained from this requirement. Features of size $> \delta a$ are then resolved, whereas  misalignment artifacts on scales $< \delta a$ are damped out in the reconstructions. There is no guarantee that this will lead to an optimal choice for $\alpha$, especially when dealing with limited-angle or truncated data. However, the proposed heuristic has proven to be useful in practice, including the latter settings.

\subsection{Alignment method} \label{SS:alignMethod}

As shown in section \ref{S:DiscXray}, the derivative of the projector $W = (W_1, \ldots, W_{N_{\textup{proj}}})$ with respect to the alignment parameters $\mbf a = (\mbf a _1, \ldots, \mbf a_{N_{\textup{proj}}})$ exhibits a block-diagonal structure. Since we do not impose a joint regularizer for $\mbf a$, the alignment steps in Algorithms \ref{algorithm:smooth} -- \ref{algorithm:alternating} decouple into independent iterations for all of the $N_{\textup{proj}}$ projections:
\begin{equation}
 \mathbf{a}_i^{(k+1)} = \mathbf{a}_i^{(k)} - \gamma_i^{(k)} \nabla_{\mathbf{a}_i}f(\mathbf{a}^{(k)},\mathbf{u}^{(k+1)})  = \mathbf{a}_i^{(k)} - \gamma_i^{(k)} \mbf s_i^{(k)}
\end{equation}
with $\mbf s_i^{(k)} := \transp{ \big( \nabla_{\mathbf{a}_i} W_i  (\mbf a_i^{(k)}) \mathbf{u}^{(k+1)} \big) } \cdot \big( W_i(\mbf a_i^{(k)}) \mathbf{u}^{(k+1)} -   \mbf p ^{\textup{obs}}_i \big)$.
The step size parameters $\gamma_i^{(k)}$ are chosen according to exact line search for quadratic functionals:
\begin{equation}
 \gamma_i^{(k)} = \frac{\norm{\mbf s_i^{(k)}}_{\textup{align}}^2}{\norm{\big(\nabla_{\mathbf{a}_i} W_i(\mbf a _i^{(k)}) \mathbf{u}^{(k+1)}\big) \cdot \mbf   s_i^{(k)}  }_2^2}.
\end{equation}
In order to avoid ill-conditioning due to different magnitudes of translational- and rotational alignment parameters, we measure the increment $s_i^{(k)}$ in a weighted 2-norm
\begin{equation}
 \norm{\mbf s_i^{(k)}}_{\textup{align}} = \norm{\mbf w \odot \mbf s_i^{(k)}}_{2}, \qquad w = (1,1,1, w_{xy}, w_{yz}, w_{zx}).
\end{equation}
Here, $\odot$ denotes pointwise multiplication and the weights $w_{xy}, w_{yz}, w_{zx} \in \mR_+$ for the rotation angles $(\theta_{i,xy}, \theta_{i,yz}, \theta_{i,zx})$ within the different planes, respectively, are chosen such that the rotation $\theta_{i,\ast}$ corresponds to a mean motion by $w_{xy} \cdot \theta_{i,\ast}$ object voxels.

As the cost function $f$ is not quadratic w.r.t.\ $\mbf a _i$, the exact linesearch method may fail to decrease $f$, i.e.\ one might have an increase of the residual
\begin{equation}
 \norm{W_i(\mbf a_i^{(k)} - \gamma_i^{(k)}\mbf s_i^{(k)} ) \mathbf{u}^{(k+1)} -   \mbf p ^{\textup{obs}}_i}_2^2  >  \norm{W_i(\mbf a_i^{(k)}  ) \mathbf{u}^{(k+1)} -   \mbf p ^{\textup{obs}}_i}_2^2
\end{equation}
In this case, we sequentially reduce the proposed stepsize $\gamma_i^{(k)}$ by a factors of two until the residual is decreased. Note however, that this residual reduction failure never occured for the test cases presented in this work.

\end{document}